\definecolor{dark-red}{rgb}{0.6,0,0}
\definecolor{dark-green}{rgb}{0,0.4,0}
\definecolor{medium-blue}{rgb}{0,0,0.5}
\newcommand{\Fil}{\mr{Fil}}
\newcommand{\mc}[1]{\mathcal{#1}}
\newcommand{\mbb}[1]{\mathbb{#1}}
\newcommand{\mbf}[1]{\mathbf{#1}}
\newcommand{\mr}[1]{\mathrm{#1}}
\newcommand{\kappabar}{\overline{\kappa}}
\newcommand{\Sym}{\mathrm{Sym}}
\DeclareMathOperator{\Spec}{Spec}
\DeclareMathOperator{\Gal}{Gal}
\DeclareMathOperator{\Hom}{Hom}
\newcommand{\ul}[1]{\underline{#1}}
\newcommand{\Log}{\mathrm{Log}}
\newcommand{\res}{\mathrm{res}}
\numberwithin{equation}{subsection}
\numberwithin{equation}{subsubsection}
\newcommand{\fqbar}{\overline{\mathbb{F}}_q}
\newcommand{\fq}{\mbb{F}_q}
\theoremstyle{plain}
\newtheorem{maintheorem}{Theorem}
\newtheorem*{theorem*}{Theorem}
\newtheorem{theorem}[subsubsection]{Theorem}
\newtheorem{proposition}[subsubsection]{Proposition}
\newtheorem{lemma}[subsubsection]{Lemma}
\theoremstyle{definition}
\newtheorem{example}[subsubsection]{Example}
\newtheorem{definition}[subsubsection]{Definition}
\newtheorem{remark}[subsubsection]{Remark}
\newcommand{\Exp}{\mathrm{Exp}}
\newcommand{\lcm}{\mathrm{lcm}}
\newcommand{\bdd}{\mathrm{bdd}} 
\newcommand{\ev}{\mathrm{ev}}
\newcommand{\Fqbar}{\overline{\mathbb{F}}_q}
\newcommand{\bF}{\mathbb{F}}
\newcommand{\bN}{\mathbb{N}}
\newcommand{\bP}{\mathbb{P}}
\newcommand{\bZ}{\mathbb{Z}}
\newcommand{\bfk}{\mathbf{k}}
\newcommand{\bfone}{\mathbf{1}}
\newcommand{\frakm}{\mathfrak{m}}
\newcommand{\sheaf}[1]{\mathscr{#1}}
\newcommand{\PP}{\mathscr{P}}
\newcommand{\restr}[3][]{#2\csname#1\endcsname|\sb{#3}}
\title{Equidistribution and arithmetic $\Lambda$-distributions}
\author{Matthew Bertucci and Sean Howe}
\begin{document}

\begin{abstract}
We formulate an abstract notion of equidistribution for families of $\lambda$-probability spaces parameterized by admissible $\mathbb{Z}$-sets. Under the assumption of equidistribution, we show that the $\sigma$-moment generating functions of certain infinite sums of random variables can be computed as motivic Euler products. Combining this result with earlier generalizations of Poonen's sieve, we compute the asymptotic $\Lambda$-distributions for several natural families of function field $L$-functions and zeta functions.
\end{abstract}

\maketitle
\tableofcontents

\section{Introduction}
In \cite{Howe.RandomMatrixStatisticsAndZeroesOfLFunctionsViaProbabilityInLambdaRings}, the second author introduced a theory of probability in $\lambda$-rings in order to provide a concise language for describing random variables valued in multisets of complex numbers. The main application was a comparison between zero distributions for certain families of function field $L$-functions and associated eigenvalue distributions in random matrix statistics \cite[Theorems B and C]{Howe.RandomMatrixStatisticsAndZeroesOfLFunctionsViaProbabilityInLambdaRings}, and the key arithmetic input for the two types of families treated in \cite{Howe.RandomMatrixStatisticsAndZeroesOfLFunctionsViaProbabilityInLambdaRings} was Poonen's sieve for hypersurface sections \cite{Poonen.BertiniTheoremsOverFiniteFields}. The  purpose of the present work is to systematize, generalize, and abstract the method used in \cite{Howe.RandomMatrixStatisticsAndZeroesOfLFunctionsViaProbabilityInLambdaRings} to compute arithmetic $\Lambda$-distributions or, equivalently, their associated $\sigma$-moment generating functions. 

To that end, we first formulate in \cref{def.equidistributes} a general notion of equidistribution for a sequence of families of $\lambda$-probability spaces parameterized by an admissible $\mathbb{Z}$-set (an abstraction of the $\fqbar$-points of an algebraic variety over $\fq$). In \cref{theorem.abstract-independence} we show that, if equidistribution holds, then for certain sequences of random variables constructed by integrating over these families, the associated sequence of $\sigma$-moment generating functions converges to an explicit motivic Euler product.
We apply this abstract result to compute asymptotic $\Lambda$-distributions for $L$-function and zeta function statistics in more settings where Poonen's sieve has been generalized (using the original sieve, one recovers \cite[Theorems B and C]{Howe.RandomMatrixStatisticsAndZeroesOfLFunctionsViaProbabilityInLambdaRings}). 

In particular, in \cref{maintheorem.complete-intersections} we combine our systematization with the generalization of Poonen's sieve given in \cite{BucurKedlaya.TheProbabilityThatACompleteIntersectionIsSmooth} to compute the asymptotic $\Lambda$-distributions of the zeroes of the $L$-functions of vanishing cohomology of smooth complete intersections and compare these with the associated random matrix $\Lambda$-distributions. This generalizes the case of smooth hypersurface sections treated in \cite[Theorem C]{Howe.RandomMatrixStatisticsAndZeroesOfLFunctionsViaProbabilityInLambdaRings}. We also treat natural families arising from the semiample version of Poonen's sieve of \cite{ErmanWood.SemiampleBertiniTheoremsOverFiniteFields} and the ``smooth-agnostic" generalization of Poonen's sieve developed in  \cite{Bertucci.TaylorConditionsOverFiniteFields}. 

The key new tool that allows us to systematize and abstract the arguments of \cite{Howe.RandomMatrixStatisticsAndZeroesOfLFunctionsViaProbabilityInLambdaRings} is a motivic Euler product adapted to point-counting. We continue the introduction by explaining how this notion arises naturally in the problems we consider.  

\subsection{Equidistribution, independence, and \texorpdfstring{$\sigma$}{σ}-moment generating functions}

The arithmetic random variables studied in \cite{Howe.RandomMatrixStatisticsAndZeroesOfLFunctionsViaProbabilityInLambdaRings} are of the following form: Fix a finite field $\kappa$ and algebraic closure $\kappabar$. For each $d\geq 1$, one defines a $\lambda$-probability space where the random variables are functions on the set $U_d$ of the degree $d$ homogeneous polynomials in $n+1$ variables with coefficients in $\kappabar$ satisfying a transversality condition with respect to a fixed smooth subscheme of $\mathbb{P}^n_{\kappa}$.
For each $P \in \mathbb{P}^n(\kappabar)$, one defines a random variable $X_{d,P}$ on $U_d$ whose value on a homogeneous polynomial $F$ depends only on the Taylor expansion of $F$ at  $P$.
One then obtains a new random variable $X_d$ on $U_d$ by ``summing" the random variables $X_{d,P}$ over all points $P \in \mathbb{P}^n(\kappabar)$, and one would like to understand the asymptotic distribution of $X_d$ as $d \rightarrow \infty$ --- there is hope of this because Poonen's sieve \cite{Poonen.BertiniTheoremsOverFiniteFields} implies the Taylor expansions equidistribute in a certain natural sense.   

In this theory, the random variables are valued in $W(\mathbb{C})$, the ring of big Witt vectors of $\mathbb{C}$, and the right notion of a distribution is a $\Lambda$-distribution: When the random variable is valued in $\mathbb{Z}_{\geq 0}[\mathbb{C}] \subseteq W(\mathbb{C})$, i.e. in multisets of complex numbers, the $\Lambda$-distribution encodes the averages of all symmetric functions of the multiset, and convergence in $W(\mathbb{C})$ is simply convergence of all of these averages. 

One of the key ideas in \cite{Howe.RandomMatrixStatisticsAndZeroesOfLFunctionsViaProbabilityInLambdaRings} is to encode the $\Lambda$-distribution using the $\sigma$-moment generating function, defined for a $W(\mathbb{C})$-valued random variable $X$ as 
\[ \mathbb{E}[\Exp_{\sigma}(Xh_1)] \in \Lambda_{W(\mathbb{C})}^\wedge \]
where $\Exp_{\sigma}$ is the plethystic exponential, $h_1=t_1+t_2+t_3+\ldots$, and $\Lambda_{W(\mathbb{C})}^\wedge$ is the ring of symmetric power series with coefficients in $W(\mathbb{C})$. These behave much like the usual moment generating functions in classical probability theory: in particular, in the above setting, we expect that, as $d \rightarrow \infty$, the random variables $X_{d,P}$ will all be independent so that their moment generating functions should multiply to give
\[ \lim_{d \rightarrow \infty} \mathbb{E}[\Exp_{\sigma}(X_d h_1)] = \prod_{P \in \mathbb{P}^n(\fqbar)} \lim_{d\rightarrow \infty} \Exp_{\sigma}[(X_{d,P} h_1)]. \]
Moreover, we expect that each of the individual random variables $X_{d,P}$ should approach the distribution of a random variable $\mathcal{X}_P$ defined independently of $d$ on the space of Taylor expansions at $P$, so that this should simplify further to
\[  \prod_{P \in \mathbb{P}^n(\fqbar)} \mathbb{E}[\Exp_{\sigma}(\mathcal{X}_{P} h_1)]. \]

The main difficulty in making this heuristic precise is that it is completely unclear how one should actually define the product over $\mathbb{P}^n(\fqbar)$. In \cite{Howe.RandomMatrixStatisticsAndZeroesOfLFunctionsViaProbabilityInLambdaRings}, we made an ad hoc argument to get around this, exploiting that, in the cases treated there, the $\Lambda$-distribution of each $\mathcal{X}_P$ is the same. Under this constraint, one finds a natural candidate for the infinite product by using the pre-$\lambda$ power structure (see \cite[\S2.6]{Howe.RandomMatrixStatisticsAndZeroesOfLFunctionsViaProbabilityInLambdaRings}).

In the present work, we address the problem head-on by using the plethystic exponential and logarithm to define motivic Euler products for admissible $\mathbb{Z}$-sets  --- see \cref{def.mep}. This is motivated by joint work of the second author with Bilu and Das \cite{BiluDasHowe.SpecialValuesOfMotivicEulerProducts} where it is shown that, when working with Grothendieck rings of varieties, the same formula recovers the motivic Euler products of Bilu \cite{Bilu.MotivicEulerProductsAndMotivicHeightZetaFunctions}. Our motivic Euler products can also be treated from the perspective of Eulerian formalisms developed in \cite{BiluDasHowe.SpecialValuesOfMotivicEulerProducts}, but here we give a more direct and independent treatment. 

A key point that is specific to the case of admissible $\mathbb{Z}$-sets is \cref{prop.euler-product-point-counting-formula}, which explains how to compute the motivic Euler products in terms of classical Euler products. This formula is what allows us to build a bridge between the abstract formulation of equidistribution (\cref{def.equidistributes}), which is adapted to comparison with Poonen's sieve, and the computation of $\sigma$-moment generating functions that carries out the heuristic described above (\cref{theorem.abstract-independence}). 

\subsection{Applications}\label{ss.intro-applications}

Combining \cref{theorem.abstract-independence} with generalizations of Poonen's sieve, we can compute asymptotic distributions of many zeta function and $L$-function random variables. As an illustration, we state now a generalization of \cite[Theorem C]{Howe.RandomMatrixStatisticsAndZeroesOfLFunctionsViaProbabilityInLambdaRings} to complete intersections (that uses the generalization of Poonen's sieve in  \cite{BucurKedlaya.TheProbabilityThatACompleteIntersectionIsSmooth} to obtain the equidistribution result needed to apply \cref{theorem.abstract-independence}). Afterwards we will briefly discuss our other applications. 

\subsubsection{}
Fix a finite field $\kappa$ and an algebraic closure $\kappabar$. For $n\geq 0$, $r \geq 1$, suppose $Y \subseteq \mathbb{P}^{n}_{\kappa}$ is a smooth closed geometrically connected subscheme of dimension $m+r$. For $\ul{d}=(d_1, \ldots, d_r) \in \mathbb{N}^{r}$, let $U_{\ul{d}}$ be the set of tuples $\ul{F}=(F_1, \ldots, F_{r})$ of homogeneous polynomials of degrees $d_1, \ldots, d_r$ in $n+1$ variables with coefficients in $\kappabar$ such that the subschemes $Y, V(F_1), \ldots, V(F_r)$ are transverse at any point of intersection (i.e., the intersection of their tangent spaces at such a point is $n$-dimensional). Then the scheme-theoretic intersection $C_{\ul{F}}=Y \cap V(F_1) \cap \ldots \cap V(F_r)$ is a smooth complete intersection in $Y_{\fqbar}$ of dimension $n$. If we write $\kappa(\ul{F})$ for the subfield of $\kappabar$ generated by $\kappa$ and the coefficients of $F_1, \ldots, F_r$, then $C_{\ul{F}}$ is naturally defined over $\kappa(\ul{F})$, and as such admits a Hasse-Weil zeta function
\[ Z_{C_{\ul{F}}}(t)=\prod_{y \in |C_{\ul{F}}|}\frac{1}{1-t^{\deg{y}}}. \]
Here $|C_{\ul{F}}|$ denotes the closed points of $C_{\ul{F}}/\kappa(\ul{F})$. 
It follows from the Grothendieck-Lefschetz trace formula and the strong Lefschetz theorem in \'{e}tale cohomology that 
\[ Z_{C_{\ul{F}}}(t) =  \mathcal{L}_{{C_{\ul{F}}}}(t)^{(-1)^n} Z_0(t) \]
where $Z_0(t)$ depends only on $Y_{\kappa(\ul{F})}$ and $\mathcal{L}_{{{C_{\ul{F}}}}}(t)$ is the characteristic power series of geometric Frobenius acting on the vanishing cohomology of $C_{\ul{F}}$ (that is, the part of the cohomology which does not ``come from" $Y_{\kappa(\ul{F})}$). For $q_{\ul{F}} := \#\kappa(\ul{F})$, the reciprocal poles of $\mathcal{L}_{{{C_{\ul{F}}}}}(t)$ are $q_{\ul{F}}$-Weil numbers of weight $m$, i.e., they are algebraic integers whose conjugates all have absolute value $q_{\ul{F}}^{\frac{m}{2}}.$ One expects the renormalized characteristic series
\[ \mathcal{L}_{{{C_{\ul{F}}}}}(t q_{\ul{F}}^{\frac{-m}{2}}) \]
to behave, on average, like the characteristic power series of a random matrix in an orthogonal group if $m > 0$ is even, a compact symplectic group if $m$ is odd, or the symmetric group in its standard irreducible representation if $m=0$. 

\subsubsection{} We compute the asymptotic $\Lambda$-distribution of the random variable $X_{\ul{d}}$ on $U_{\ul{d}}$ given by
\[ X_{\ul{d}}(\ul{F})=\mathcal{L}_{{{C_{\ul{F}}}}}(t q_{\ul{F}}^{\frac{-m}{2}}) \in 1+t\mathbb{C}[[t]]=W(\mathbb{C})\]
(equivalently, sending $F$ to the multiset of reciprocal poles as an element of $\mathbb{Z}_{\geq 0}[\mathbb{C}] \subseteq W(\mathbb{C})$, i.e. to the eigenvalues of the associated matrix) and compare it to one of the classical group random matrix $\Lambda$-distributions computed in \cite[Theorem A]{Howe.RandomMatrixStatisticsAndZeroesOfLFunctionsViaProbabilityInLambdaRings}. 

To state the result, we introduce some notation: We write $[Y(\kappabar)]=Z_{Y}(t) \in W(\mbb{C})$ and $[H^i(Y)] \in 1+t\mathbb{C}[[t]] =W(\mbb{C})$ for the characteristic power series of the geometric Frobenius acting on the $i$th \'{e}tale cohomology group of $Y_{\kappabar}$ (which are relevant because the persistent factor $Z_0(t)$ of $Z_{C_{\ul{F}}}(t)$ can be expressed in terms of $[H^i(Y)]$, $i \leq m$). For $z \in \mathbb{C}$, we write $[z]$ for the element $\frac{1}{1-tz}$ in $1+t\mathbb{C}[[t]]=W(\mathbb{C})$. 

We write $e_i$ for the $i$th elementary symmetric polynomial and $h_i$ for the $i$th complete symmetric polynomial (see \cite[\S2.1]{Howe.RandomMatrixStatisticsAndZeroesOfLFunctionsViaProbabilityInLambdaRings} for the general notation and results on symmetric functions that we use). 

We set $q:=\#\kappa$. We write $L(a,b,c)=\prod_{j=0}^{c-1}(1-a^{-(b-j)})$; note that $L(q,m+r,r)$ is the probability that $r$ vectors in $\mathbb{F}_{q}^{m+r}$ are linearly independent (cf. \cite[p.2]{BucurKedlaya.TheProbabilityThatACompleteIntersectionIsSmooth}). 

We write 
$\displaystyle \lim_{\ul{d} \xrightarrow{*} \infty}$
for a limit where each $d_i \rightarrow \infty$ and $\max(d_i)^{m+r} q^{\frac{-\min(d_i)}{m+r+1}} \rightarrow 0$. 

Finally, we write $W(\mathbb{C})^\bdd$ for the subring of $W(\mathbb{C})$ consisting of elements with bounded ghost components (which, as in \cite[\S9]{Howe.RandomMatrixStatisticsAndZeroesOfLFunctionsViaProbabilityInLambdaRings}, encodes some big $O$ notation).  

When $r=1$, the following result is \cite[Theorem C]{Howe.RandomMatrixStatisticsAndZeroesOfLFunctionsViaProbabilityInLambdaRings}; we refer the reader to loc. cit. and the surrounding discussion for more on its classical interpretations. 

\begin{maintheorem}\label{maintheorem.complete-intersections}
    With notation as above, 
\begin{equation}\label{eq.ci-main-theorem-limit} \lim_{\ul{d} \xrightarrow{*} \infty}\mbb{E}[\Exp_{\sigma}(X_dh_1)]= (1 + p\sum_{i \geq 1} [q^{-im/2}] \epsilon^i f_i)^{[Y(\kappabar)]} \cdot \Exp_\sigma(\mu h_1)  \end{equation}
where $f_i=e_i$ and $\epsilon=-1$ if $m$ is odd and $f_i=h_i$ and $\epsilon=1$ if $m$ is even, 
\begin{multline*} p = \frac{[q]^{-r}L([q],m+r,r)}{1- [q]^{-r} + [q]^{-r}L([q],m+r,r)} \textrm{, and } \\\mu=-\epsilon \left(\sum_{i=0}^{m-1} (-1)^i\left([q^{\frac{-m}{2}}]+[q^{\frac{m-2i}{2}}]\right)[H^i(Y)]\right) - [q^{-m/2}][H^{m}(Y)]. \end{multline*}
In particular, modulo $[q^{-\frac{1}{2}}]\Lambda_{W(\mbb{C})^\bdd}^\wedge$ this agrees with:
    \begin{enumerate}
        \item For $n>0$ even, $\Exp_{\sigma}(h_2)$ (the asymptotic $\sigma$-moment generating function for orthogonal random matrices  \cite[Theorem A-(1)]{Howe.RandomMatrixStatisticsAndZeroesOfLFunctionsViaProbabilityInLambdaRings}).
        \item For $n$ odd, $\Exp_{\sigma}(e_2)$ (the asymptotic $\sigma$-moment generating function for symplectic random matrices \cite[Theorem A-(2)]{Howe.RandomMatrixStatisticsAndZeroesOfLFunctionsViaProbabilityInLambdaRings}).
        \item For $n=0$, $\Exp_{\sigma}(h_2+h_3+\ldots)$ (the asymptotic $\sigma$-moment generating function for the standard irreducible representations of symmetric groups \cite[Example 1.2.1]{Howe.RandomMatrixStatisticsAndZeroesOfLFunctionsViaProbabilityInLambdaRings}).  
    \end{enumerate}
\end{maintheorem}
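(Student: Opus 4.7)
The plan is to follow the blueprint of \cite{Howe.RandomMatrixStatisticsAndZeroesOfLFunctionsViaProbabilityInLambdaRings} as now systematized by \cref{theorem.abstract-independence}. First, via the Grothendieck-Lefschetz decomposition $Z_{C_{\ul{F}}}(t) = \mathcal{L}_{C_{\ul{F}}}(t)^{(-1)^n} Z_0(t)$ and the Euler product for the zeta function, I would write the random variable $X_{\ul{d}}(\ul{F}) = \mathcal{L}_{C_{\ul{F}}}(t q_{\ul{F}}^{-m/2})$, viewed as an element of $W(\mbb{C}) = 1 + t\mbb{C}[[t]]$, in the form
\[ X_{\ul{d}} = \mu_{\ul{d}} + \epsilon \sum_{P \in Y(\kappabar)} X_{\ul{d},P}, \]
where $\mu_{\ul{d}}$ is a deterministic contribution coming from $Z_0(t q_{\ul{F}}^{-m/2})$ and each local random variable $X_{\ul{d},P}$ depends only on the Taylor expansion of $\ul{F}$ at $P$ to order $1$ (it vanishes unless $P \in C_{\ul{F}}$, on which event it records a value in $W(\mbb{C})$ determined by the Frobenius orbit of $P$ and the weight $m$).

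Next I would invoke the generalization of Poonen's sieve due to Bucur and Kedlaya \cite{BucurKedlaya.TheProbabilityThatACompleteIntersectionIsSmooth} to verify that the sequence $U_{\ul{d}}$ equidistributes, in the sense of \cref{def.equidistributes}, on the admissible $\mathbb{Z}$-set of Taylor expansions of order $1$ on $Y$; the limit regime $\max(d_i)^{m+r} q^{-\min(d_i)/(m+r+1)} \to 0$ is precisely what is needed to force the Bucur--Kedlaya error bounds to vanish uniformly on the tail of $Y(\kappabar)$. The limiting local distribution $\mathcal{X}_P$ then admits an explicit description: it vanishes unless all $F_i(P) = 0$ and the Jacobian at $P$ has full row rank (the transversality condition), an event whose local probability is computed by elementary linear algebra and gives rise via \cref{prop.euler-product-point-counting-formula} to the rational factor involving $L([q], m+r, r)$.

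With equidistribution established, \cref{theorem.abstract-independence} immediately yields
\[ \lim_{\ul{d} \xrightarrow{*} \infty} \mbb{E}[\Exp_\sigma(X_{\ul{d}} h_1)] = \Exp_\sigma(\mu h_1) \cdot \prod_{P \in Y(\kappabar)}^{\mathrm{mot}} \mbb{E}[\Exp_\sigma(\epsilon \mathcal{X}_P h_1)], \]
and \cref{prop.euler-product-point-counting-formula} rewrites the motivic Euler product as the stated $(1 + p\sum_{i \geq 1} [q^{-im/2}] \epsilon^i f_i)^{[Y(\kappabar)]}$. The dichotomy between $f_i = e_i$ and $f_i = h_i$ comes from the identity $\Exp_\sigma([z] h_1) = \sum_i [z]^i h_i$ combined with the sign $\epsilon$, which toggles between the generating series for $h_i$ and $e_i$ via Witt-vector inversion; the parity of $m$ (entering through $(-1)^n$ in Grothendieck-Lefschetz) selects which one appears. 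The closed form for $\mu$ then follows by direct expansion of $Z_0(tq^{-m/2})$ in terms of $[H^i(Y)]$ using the strong Lefschetz theorem.

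The main obstacle I foresee is the bookkeeping required to confirm that the quantitative Bucur--Kedlaya estimate really implies \cref{def.equidistributes} uniformly across the infinite indexing set $Y(\kappabar)$, and to track the sign $\epsilon$ and the $e_i$--$h_i$ swap consistently through the Witt-vector arithmetic in $W(\mbb{C})$. For the final matching with random matrix statistics, reducing both sides modulo $[q^{-1/2}]\Lambda_{W(\mbb{C})^\bdd}^\wedge$ annihilates every term of $\mu$ except a constant (via the Weil bounds on the eigenvalues of Frobenius acting on $H^i(Y)$) and kills all $i \geq 2$ contributions to the Euler factor when $m \geq 1$; the symmetric-function manipulations producing $\Exp_\sigma(h_2)$, $\Exp_\sigma(e_2)$, or $\Exp_\sigma(h_2 + h_3 + \ldots)$ in the three cases mirror those carried out in the proof of \cite[Theorem C]{Howe.RandomMatrixStatisticsAndZeroesOfLFunctionsViaProbabilityInLambdaRings}.
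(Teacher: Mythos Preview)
Your proposal is correct and follows essentially the same approach as the paper: establish equidistribution via Bucur--Kedlaya (the paper's \cref{prop.tuples-equidistribution}), apply \cref{theorem.abstract-independence} to compute the limiting $\sigma$-moment generating function as a motivic Euler product, recognize this as a pre-$\lambda$ power via \cref{example.constant-product-is-power} (not \cref{prop.euler-product-point-counting-formula}, which is the ghost-component formula), and then reduce modulo $[q^{-1/2}]\Lambda_{W(\mbb{C})^\bdd}^\wedge$ as in \cite[Proposition 9.2.2]{Howe.RandomMatrixStatisticsAndZeroesOfLFunctionsViaProbabilityInLambdaRings}. The only structural difference is that the paper first isolates the computation for the zeta-function random variable $Z_{\ul{d}}$ as a standalone result (\cref{theorem.ci-geometric-rv}) and then passes to $X_{\ul{d}}$ via the affine relation \cref{eq.vanishing-cohom-rv} together with \cite[Theorem 2.2.1]{Howe.TheNegativeSigmaMomentGeneratingFunction} for the $h_i\leftrightarrow e_i$ swap, whereas you fold this into a single step.
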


To obtain the limiting $\sigma$-moment generating function in \cref{maintheorem.complete-intersections}, as in the proof of \cite[Theorem C]{Howe.RandomMatrixStatisticsAndZeroesOfLFunctionsViaProbabilityInLambdaRings}, we use formal properties of independence to reduce to a geometric version computing the distribution of the random variable $Z_{\ul{d}}$ sending $\ul{F}$ to $Z_{C_{\ul{F}}}(t)$. This geometric result is given in \cref{theorem.ci-geometric-rv}, and generalizes \cite[Theorem 8.3.1]{Howe.RandomMatrixStatisticsAndZeroesOfLFunctionsViaProbabilityInLambdaRings} (we use \cite[Theorem 2.2.1]{Howe.TheNegativeSigmaMomentGeneratingFunction} to handle a negative sign when $m$ is odd instead of carrying out the computation separately as in \cite{Howe.RandomMatrixStatisticsAndZeroesOfLFunctionsViaProbabilityInLambdaRings}). The comparisons with random matrix statistics are then deduced as in \cite[Proposition 9.2.2]{Howe.RandomMatrixStatisticsAndZeroesOfLFunctionsViaProbabilityInLambdaRings}  

\subsubsection{}
In \cref{theorem.exotic-tranversality-zeta} we give a different generalization of \cite[Theorem 8.3.1]{Howe.RandomMatrixStatisticsAndZeroesOfLFunctionsViaProbabilityInLambdaRings}, treating zeta functions of hypersurface sections with more exotic transversality conditions --- this is deduced from \cref{theorem.abstract-independence} using the smooth-agnostic extension of Poonen's sieve in \cite{Bertucci.TaylorConditionsOverFiniteFields}. In \cref{ss.example-L-functions-of-characters} we also explain how the computations \cite[Theorem B]{Howe.RandomMatrixStatisticsAndZeroesOfLFunctionsViaProbabilityInLambdaRings}, which gave the asymptotic $\Lambda$-distributions of $L$-functions of certain families of Dirichlet characters, can be recovered from the perspective adopted here (this requires only Poonen's original sieve as input into \cref{theorem.abstract-independence})

In \cref{theorem.hirzebruch-2d} we compute the asymptotic $\Lambda$-distributions of the zeta functions of degree $(2,d)$ curves on Hirzebruch surfaces  --- this computation is deduced from \cref{theorem.abstract-independence} using the semiample extension of Poonen's sieve in \cite{ErmanWood.SemiampleBertiniTheoremsOverFiniteFields}, and it refines \cite[Theorem 9.9-(b)]{ErmanWood.SemiampleBertiniTheoremsOverFiniteFields} (see the start of \cref{ss.hirzebruch} for further discussion). 

\begin{remark}
    Analogous results in the Grothendieck rings of varieties and Hodge structures will be given in \cite{BiluHowe.MotivicRandomVariables}. In that setting, the results and methods of \cite{BiluHowe.MotivicEulerProductsInMotivicStatistics} provide a uniform treatment of the input that is analogous to that coming from Poonen's sieve in the point-counting setting. We recall that results giving asymptotic point-counts or traces of Frobenius and results giving asymptotics in the Grothendieck ring of varieties with respect to the dimension filtration do not imply one another, and typically require different methods  (see, e.g., \cite[\S1]{BiluDasEtAl.ZetaStatisticsAndHadamardFunctions} for a detailed discussion).  
\end{remark}

\subsection{Organization} In \cref{s.preliminaries} we set up some basic results on (pre-)$\lambda$-rings, admissible $\mathbb{Z}$-sets and their $W(\mathbb{C})$-valued functions, and $\lambda$-probability spaces. Much of the material is recalled from \cite{Howe.RandomMatrixStatisticsAndZeroesOfLFunctionsViaProbabilityInLambdaRings}, but we also give a few new results and definitions adapted to handling families of $\lambda$-probability spaces parameterized by admissible $\mathbb{Z}$-sets. In \cref{s.motivic-euler-products} we define motivic Euler products with respect to a map of admissible $\mathbb{Z}$-sets and establish the basic properties of this operation. 

After these preliminaries, in \cref{s.equidistribution-and-independence} we define our notion of equidistribution and establish the abstract form of our main result, \cref{theorem.abstract-independence}. 
In \cref{s.homog}, \cref{s.tuples-homog}, and \cref{s.semiample}, respectively, we then use generalizations of Poonen's sieve to establish equidistribution and obtain applications of \cref{theorem.abstract-independence} for homogeneous polynomials (using the generalization of Poonen's sieve of \cite{Bertucci.TaylorConditionsOverFiniteFields}), tuples of homogeneous polynomials (using the generalization of Poonen's sieve in \cite{BucurKedlaya.TheProbabilityThatACompleteIntersectionIsSmooth}), and sections of semiample line bundles (using the generalization of Poonen's sieve in \cite{ErmanWood.SemiampleBertiniTheoremsOverFiniteFields}), respectively. In particular, \cref{maintheorem.complete-intersections} is established in \cref{s.tuples-homog}. 

\subsection{Acknowledgments}
Matthew Bertucci was partially supported during the preparation of this work by the University of Utah's NSF Research Training Grant \href{https://www.nsf.gov/awardsearch/showAward?AWD_ID=1840190}{\#1840190}. Sean Howe was supported by the National Science Foundation through grant \href{https://www.nsf.gov/awardsearch/showAward?AWD_ID=2201112}{DMS-2201112}. We thank Margaret Bilu and Ronno Das for helpful conversations, especially related to the motivic Euler product formula of \cite{BiluDasHowe.SpecialValuesOfMotivicEulerProducts}.

\section{Preliminaries}\label{s.preliminaries}

In this section we discuss some basic notions on (pre-)$\lambda$-rings, admissible $\mathbb{Z}$-sets and their $W(\mathbb{C})$-valued functions, and $\lambda$-probability. Except for some new base change properties that will be useful for working with geometric families of random variables, this material is treated in more detail in \cite[\S2, 3, and 5]{Howe.RandomMatrixStatisticsAndZeroesOfLFunctionsViaProbabilityInLambdaRings}. We give citations to \cite{Howe.RandomMatrixStatisticsAndZeroesOfLFunctionsViaProbabilityInLambdaRings}; citations to earlier work on some of these topics can be found in \cite{Howe.RandomMatrixStatisticsAndZeroesOfLFunctionsViaProbabilityInLambdaRings}. 

\subsection{pre-\texorpdfstring{$\lambda$}{λ}-rings}\label{ss.pre-lambda-prelim}

\subsubsection{}We will use the notation for symmetric functions described in \cite[\S2.1]{Howe.RandomMatrixStatisticsAndZeroesOfLFunctionsViaProbabilityInLambdaRings}. In particular, we write $\Lambda$ for the ring of symmetric functions, and $h_i$ (resp. $e_i$, resp. $p_i$) denotes the $i$th complete (resp. elementary, resp. power sum) symmetric function. For $\tau=(\tau_1,\tau_2,\ldots)$ a partition, $h_\tau=h_{\tau_1}h_{\tau_2}\cdots$, and similarly for $p_\tau$ and $e_\tau$. The monomial symmetric function $m_\tau$ is the formal sum of all distinct permutations of the monomial $t_1^{\tau_1}t_2^{\tau_2}\cdots$. We will frequently use that
\[ h_1=e_1=p_1=m_{(1,0,0,\ldots)}=t_1+t_2+t_3+\ldots.\]

\subsubsection{} Recall from \cite[\S2.2 and \S2.4]{Howe.RandomMatrixStatisticsAndZeroesOfLFunctionsViaProbabilityInLambdaRings} that a (pre-)$\lambda$-ring $R$ is a ring equipped with a plethystic action of the ring $\Lambda$ of symmetric functions, written $a \circ r$ for $a \in \Lambda$ and $r \in R$, satisfying certain natural compatibilities.

\subsubsection{}\label{sss.witt}
We let $W(\mathbb{C})=\Hom_{\mathrm{ring}}(\Lambda, \mathbb{C})$ denote the ring of big Witt vectors of $\mathbb{C}$. We refer the reader to \cite[\S 5.1]{Howe.RandomMatrixStatisticsAndZeroesOfLFunctionsViaProbabilityInLambdaRings} for an overview of its properties; here we briefly recall the structures we will use.  As an additive group, $W(\mathbb{C})$ is naturally isomorphic to $1+t\mathbb{C}[[t]]$ under multiplication, and for $w \in W(\mathbb{C})$, we write $w(t^i)$ for the element obtained by substituting $t$ for $t^i$ in this presentation.  As a ring $W(\mathbb{C})$ is naturally isomorphic to $\prod_{k \geq 1} \mathbb{C}$. For $w \in W(\mathbb{C})$, and $k \geq 1$ we write $w_k$ for its $k$th component in this product presentation, called the $k$th ghost component. There is a $\lambda$-ring structure on $W(\mathbb{C})$ determined by the Adams operations 
\begin{equation}\label{eq.wv-adams-operations}
p_i \circ (w_1, w_2, \ldots)=(w_i, w_{2i}, \ldots)
\end{equation}
and we have 
\begin{equation}\label{eq.wv-substitution-ghost}  w(t^i)_k=i w_{i/k} 
\end{equation}
where  we take $w_{i/k}$ to be zero if $i/k$ is not a positive integer. 

\subsubsection{}
Recall from \cite[Section 2.3]{Howe.RandomMatrixStatisticsAndZeroesOfLFunctionsViaProbabilityInLambdaRings} that, for any (pre-)$\lambda$-ring $R$, we have a natural (pre-)$\lambda$-ring structure on 
\[ R[[\ul{t}_{\mathbb{N}}]] =\varprojlim_{n} R[[t_1, \ldots, t_n]] \]
extending the (pre-)$\lambda$-ring structure on $R$ and such that $p_i \circ t_j = t_j^i$. It is moreover a filtered (pre-)$\lambda$-ring for the filtration by monomial degree. In particular, as in \cite[\S2.5]{Howe.RandomMatrixStatisticsAndZeroesOfLFunctionsViaProbabilityInLambdaRings}, we have the $\sigma$-exponential 
\[ \Exp_{\sigma}: \Fil^1 R[[\ul{t}_{\mathbb{N}}]] \xrightarrow{\sim} 1+ \Fil^1 R[[\ul{t}_{\mathbb{N}}]],\; F \mapsto \sum_{k \geq 0} h_k \circ F \]
and its inverse (which exists for formal reasons)
\[ \Log_{\sigma}: 1+ \Fil^1 R[[\ul{t}_{\mathbb{N}}]] \xrightarrow{\sim} \Fil^1 R[[\ul{t}_{\mathbb{N}}]]. \] 
All of these constructions can be restricted to the (pre-)$\lambda$-subring $\Lambda_R^{\wedge} \subseteq R[[\ul{t}_{\mbb{N}}]]$ of symmetric power series.

\subsubsection{}\label{sss.powers}Recall that, for $F \in 1 + \Fil^1R[[\ul{t}_{\mathbb{N}}]]$ and $N \in R[[\ul{t}_{\mathbb{N}}]]$ we have an associated pre-$\lambda$ power as in \cite[Definition 2.6.1]{Howe.RandomMatrixStatisticsAndZeroesOfLFunctionsViaProbabilityInLambdaRings}, 
\[F^N := \Exp_{\sigma}(N \cdot\Log_\sigma(F)).\]

\subsubsection{}\label{sss.coeff-wise}At certain points, we will also wish to use the coefficient-wise pre-$\lambda$-ring structure on $R[[\ul{t}_{\mathbb{N}}]]$ or $\Lambda_R^\wedge$, which we denote by $\ast$, i.e. $f \ast \sum r_{\ul{i}} \ul{t}^{\ul{i}}= \sum (f \circ r_{\ul{i}}) \ul{t}^{\ul{i}}.$

\subsection{Admissible \texorpdfstring{$\mathbb{Z}$}{ℤ}-sets}\label{ss.admissible-z-prelim}

\subsubsection{}Recall from \cite[\S5.2]{Howe.RandomMatrixStatisticsAndZeroesOfLFunctionsViaProbabilityInLambdaRings} that an admissible $\mathbb{Z}$-set is a set $V$ with an action of $\mathbb{Z}$ such that $V=\cup_{k \geq 1} V^{k\mathbb{Z}}$ and, for any $k \geq 1$, $V^{k\mathbb{Z}}$ is finite. 
We write $|V|$ for the set of $\mathbb{Z}$-orbits in $V$ and, for $v \in V$, we write $|v|$ for the orbit containing $v$. The degree of a point or orbit is the size of the orbit; we write this as $\deg(v)$ or $\deg(|v|)$. We write $\mathbf{k}=\mathbb{Z}/k\mathbb{Z}$ (as an admissible $\mathbb{Z}$-set). For any admissible $\mathbb{Z}$-set $V$, we write 
\[ [V] = \prod_{|v| \in |V|} \frac{1}{1-t^{\deg(|v|)}} \in 1+t\mathbb{C}[[t]]= W(\mathbb{C}). \]
In ghost coordinates, we have $[V]=(\#V(\mathbf{1}), 
\#V(\mathbf{2}), \ldots)$ (see \cite[Lemma 5.2.3]{Howe.RandomMatrixStatisticsAndZeroesOfLFunctionsViaProbabilityInLambdaRings}) 

\subsubsection{}\label{sss.fiber}Recall from \cite[Definition 5.2.5]{Howe.RandomMatrixStatisticsAndZeroesOfLFunctionsViaProbabilityInLambdaRings} that, if $V \rightarrow B$ is a map of admissible $\mathbb{Z}$-sets, then, for any $b \in B$, we equip the fiber $V_b$ with the structure of an admissible $\mathbb{Z}$-set by multiplying the action on $V$ by $\deg(b)$ so that it restricts to an action on $V_b$. 

\subsubsection{}\label{sss.ext-of-k}For $V$ an admissible $\mathbb{Z}$-set and $k$ a positive integer, we write $V_{\mbf{k}}$ for the same set but with the action of $\mathbb{Z}$ multiplied by $k$. If $\varphi: V \rightarrow B$ is a map of admissible $\mathbb{Z}$-sets then we write $\varphi_{\mbf{k}}$ for the induced map $V_{\mbf{k}} \rightarrow B_{\mbf{k}}$. 

We can identify $V_{\mbf{k}}$ with the fiber of the projection map $V \times \mbf{k} \rightarrow \mbf{k}$ over $1 \in \mbf{k}$.

\subsubsection{}
Admissible $\mathbb{Z}$-sets give an abstract formalism for studying the points of algebraic varieties over finite fields: for $\mathbb{F}_q$ a finite field of cardinality $q$, fix an algebraic closure $\fqbar$ and write $\mathbb{F}_{q^k}$ for the unique subfield of cardinality $q^k$. Then, the set of $\overline{\mathbb{F}}_q$-points $Y(\overline{\mathbb{F}}_q)$ of an algebraic variety over $Y/\mathbb{F}_q$ is an admissible $\mathbb{Z}$-set with $1$ acting as the geometric Frobenius; in this case, $|Y(\overline{\mathbb{F}}_q)|$ can be identified with the closed points of $Y$, and $[Y(\fqbar)]$ is an incarnation of the zeta function of $Y$ (see also \cite[Example 5.2.2]{Howe.RandomMatrixStatisticsAndZeroesOfLFunctionsViaProbabilityInLambdaRings}). The set $Y(\Fqbar)_{\mbf{k}}$ is then $Y_{\mathbb{F}_{q^k}}(\fqbar)$, i.e. the set $Y(\fqbar)$ but with $1 \in \mathbb{Z}$ acting by the $q^k$-power geometric Frobenius instead of the $q$-power geometric Frobenius.

\subsection{\texorpdfstring{$W(\mathbb{C})$}{W(ℂ)}-valued functions}
\subsubsection{} For $V$ an admissible $\mathbb{Z}$-set, we write $C(V, W(\mathbb{C}))$ for the set of functions from $V$ to $W(\mathbb{C})$ that are constant on $\mathbb{Z}$-orbits. It is a $\lambda$-ring with the pointwise $\lambda$-ring structure, and, given $\varphi: V \rightarrow B$ a  map of admissible $\mathbb{Z}$-sets, there is a natural pullback map (\cite[Definition 5.3.4-(1)]{Howe.RandomMatrixStatisticsAndZeroesOfLFunctionsViaProbabilityInLambdaRings}) 
\[ \varphi^*: C(B, W(\mathbb{C})) \rightarrow C(V, W(\mathbb{C})) \]
that is a map of $\lambda$-rings, and a natural integration-over-fibers map (\cite[Definition 5.3.4-(2)]{Howe.RandomMatrixStatisticsAndZeroesOfLFunctionsViaProbabilityInLambdaRings})
\[ \varphi_!: C(V, W(\mathbb{C})) \rightarrow C(B,W(\mathbb{C}))\]  that is $\varphi^*$-linear (\cite[Lemma 5.3.7]{Howe.RandomMatrixStatisticsAndZeroesOfLFunctionsViaProbabilityInLambdaRings}). When the map $\varphi$ is clear from context, we  write $\int_{V/B}$ in place of $\varphi_!$. If $B=\mathbf{1}$ is the final object, we may also write $\int_V$ in place of $\int_{V/\mathbf{1}}$. We recall the formulas for $\varphi^*$ and $\int_{V/B}$, since they will be used below:
\begin{itemize}
    \item For $g \in C(B,W(\mathbb{C}))$, 
\[ (\varphi^* g)(v)=p_{\frac{\deg(v)}{\deg(\varphi(v))}}\circ g(\varphi(v)).\]
    \item For $f \in C(V, W(\mathbb{C}))$,
    \[ \left(\int_{V/B} f\right)(b)=\int_{V_b} f = \sum_{|v| \in |V_b|} f(t^{\deg(|v|)}). \]
\end{itemize}

\begin{example}[See Example 5.3.5 of \cite{Howe.RandomMatrixStatisticsAndZeroesOfLFunctionsViaProbabilityInLambdaRings}] \label{example.integral-of-1-is-zeta}
 For $V$ an admissible $\mathbb{Z}$-set, \[  \int_{V} 1 = [V] \]
where here $1$ denotes the constant function on $V$ with value the unit in $W(\mathbb{C})$ (the unit in $W(\mathbb{C})$ is the element $\frac{1}{1-t}$ in the identification $1+t\mathbb{C}[[t]]=W(\mathbb{C})$). 
\end{example}

We will need the following base change formula relating pullback and integration over fibers that was not made explicit in \cite{Howe.RandomMatrixStatisticsAndZeroesOfLFunctionsViaProbabilityInLambdaRings}: 

\begin{lemma}\label{lemma.base-change}
If 
\[\begin{tikzcd}
	{V_1} & {V_2} \\
	{B_1} & {B_2}
	\arrow["\varphi", from=1-1, to=1-2]
	\arrow[from=1-1, to=2-1]
	\arrow[from=1-2, to=2-2]
	\arrow["\psi", from=2-1, to=2-2]
\end{tikzcd}\]
is a cartesian diagram of admissible $\mathbb{Z}$-sets then, for $f \in C(V_2, W(\mathbb{C}))$, 
\[ \psi^* \int_{V_2/B_2} f = \int_{V_1/ B_1} \varphi^*f. \]
\end{lemma}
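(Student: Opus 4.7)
The plan is to verify the equality of $W(\mathbb{C})$-valued functions on $B_1$ pointwise and, at each $b_1 \in B_1$, match the two sides in $W(\mathbb{C})$ ghost-component by ghost-component. I would fix $b_1$, set $b_2 = \psi(b_1)$ and $d_i = \deg(b_i)$, and record that $d_2 \mid d_1$ by $\mathbb{Z}$-equivariance of $\psi$.

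The first step is to unwind the definitions of \cref{sss.witt} and \cref{sss.fiber} into a clean ghost-component form of integration over fibers: for any $\pi\colon V \to B$ of admissible $\mathbb{Z}$-sets, any $f \in C(V,W(\mathbb{C}))$, and any $b \in B$ with $d_b = \deg(b)$,
\[ \left(\left(\int_{V/B} f\right)(b)\right)_k \;=\; \sum_{v \in V_b \cap V^{kd_b\bZ}} f(v)_{kd_b/\deg_V(v)}, \]
using that the action on $V_b$ is the $d_b$-scaled $V$-action, so $V_b^{k\bZ} = V_b \cap V^{kd_b\bZ}$ and $\deg_{V_b}(v) = \deg_V(v)/d_b$. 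Combining this with the pullback formula $(\psi^* g)(b_1)_k = g(b_2)_{kd_1/d_2}$, the $k$th ghost of the left-hand side at $b_1$ becomes
\[ \sum_{v \in V_{2,b_2} \cap V_2^{kd_1\bZ}} f(v)_{kd_1/\deg_{V_2}(v)}.\]
Using the same integration formula together with $(\varphi^* f)(v')_j = f(\varphi(v'))_{j\,\deg_{V_1}(v')/\deg_{V_2}(\varphi(v'))}$, the $k$th ghost of the right-hand side at $b_1$ becomes
\[ \sum_{v' \in V_{1,b_1} \cap V_1^{kd_1\bZ}} f(\varphi(v'))_{kd_1/\deg_{V_2}(\varphi(v'))},\]
the factor $\deg_{V_1}(v')$ having cancelled inside the subscript.

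To finish, cartesianness supplies the required bijection: $V_{1,b_1}$ consists of pairs $(v_2, b_1)$ with $v_2 \in V_{2,b_2}$, so $\varphi$ restricts to a bijection $V_{1,b_1} \xrightarrow{\sim} V_{2,b_2}$. The condition $v' \in V_1^{kd_1 \bZ}$ reduces under this bijection to $v_2 \in V_2^{kd_1 \bZ}$, since $kd_1 \cdot b_1 = b_1$ is automatic from $d_1 \mid kd_1$. The summands on the two sides then match term-by-term, giving equality of all ghost components. I do not expect any real difficulty beyond this bookkeeping; the only nontrivial arithmetic input is the divisibility $d_b \mid \deg_V(v)$ for $v \in V_b$, which is immediate from equivariance of the structure map and is what lets the fiber-degree rewriting go through.
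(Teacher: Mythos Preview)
Your proof is correct. Both you and the paper verify the identity in ghost components after evaluating at a point $b_1 \in B_1$, but the organization differs in a way worth noting. The paper's proof stays at the level of orbits: it expands each side as a sum indexed by orbits $|v| \in |V_{2,\psi(b_1)}|$, tracks how each such orbit breaks into $\gcd(\deg(|v|), a)$ smaller orbits when the action is rescaled by $a = d_1/d_2$, and only at the end passes to ghost coordinates to check that the resulting multiplicities and degree shifts match. You instead establish upfront the point-sum formula
\[
\left(\left(\int_{V/B} f\right)(b)\right)_k \;=\; \sum_{v \in V_b \cap V^{kd_b\bZ}} f(v)_{kd_b/\deg_V(v)},
\]
summing over individual points rather than orbits. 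This absorbs all the orbit-decomposition bookkeeping into the single observation that $\varphi$ restricts to a bijection between $V_{1,b_1} \cap V_1^{kd_1\bZ}$ and $V_{2,b_2} \cap V_2^{kd_1\bZ}$, which is immediate from cartesianness. Your route is shorter and reduces the lemma to a near-tautology; the paper's route keeps the orbit structure visible throughout, which is closer to how the integration map is originally defined.
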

\begin{proof}
We write $V:=V_2$ so that $V_1=V_2 \times_{B_2} B_1$. Then, for $b \in B_1$ and $a:=\deg(b)/\deg(\psi(b))$, 
\begin{align} \nonumber \left( \psi^* \int_{V/B_2} f \right) (b) &= p_{a}\circ \left(\int_{V/B_2}f\right)(\psi(b)) \\
\nonumber &= p_{a}\circ \int_{V_{\psi(b)}}f|_{V_{\psi(b)}} \\
\label{eq.first-expansion-base-change}&= \sum_{|v| \in V_{\psi(b)}} p_{a}\circ \left(f(|v|)(t^{\deg(|v|)})\right), 
\end{align}
where the three equalities are immediate from the definition of pullback and integration. On the other hand, again immediately from the definitions, 
\begin{equation}\label{eq.second-expansion-base-change} \left( \int_{V \times_{B_2} B_1 / B_1} \varphi^*f \right) (b) = \sum_{|w| \in V_b} \left(p_{\deg(w)/\deg(\varphi(w))}\circ f(|w|)\right)(t^{\deg(|w|)}). \end{equation}
Now, $V_{b}$ is identified with $V_{\psi(b)}$, but with the action multiplied by $a$. Thus each orbit $|v|$ in $V_{\psi(b)}$, viewed as a subset of $V_b$, decomposes into $\mu_{|v|}:=\gcd(\deg(|v|), a)$ orbits of size $\deg(|v|)/\mu_{|v|}$, whose points $w$ satisfy $\deg(w)/\deg(v)=a/\mu_{|v|}$.  Thus, we can rewrite the sum on the right of \cref{eq.second-expansion-base-change} as 
\begin{equation}\label{eq.second-expansion-base-change-v2} \sum_{|v| \in V_{\psi(b)}} \mu_{|v|}\left( p_{a/\mu_{|v|}}\circ f(|v|)\right)(t^{\deg(|v|)/\mu_{|v|})}). \end{equation} 

To see that \cref{eq.second-expansion-base-change-v2} agrees with the right of \cref{eq.first-expansion-base-change}, we use \cref{eq.wv-adams-operations} and \cref{eq.wv-substitution-ghost} to compute ghost coordinates of the terms appearing as
\[  \left( p_{a}\circ \left(f(|v|)(t^{\deg(|v|)})\right) \right)_i = \deg(|v|) f(|v|)_{ai/\deg(|v|)} \]
and 
\[ \left(\left( p_{a/\mu_{|v|}}\circ f(|v|)\right)(t^{\deg(|v|)/\mu_{|v|})})\right)_i = \frac{\deg(|v|)}{\mu_{|v|}} f(|v|)_{ai/\deg(|v|)}. \]
Since a multiple of $\mu_{|v|}$ appears in \cref{eq.second-expansion-base-change-v2}, we conclude. 
\end{proof}

\subsubsection{}\label{sss.restriction-to-Vk} Recall from \cref{sss.ext-of-k} that, for $V$ an admissible $\mathbb{Z}$-set and $k$ a positive integer, we have defined $V_{\mathbf{k}}$ by multiplying the $\mathbb{Z}$-action by $k$. 

If $f \in C(V, W(\mathbb{C}))$ and $k \geq 1$ then from $f$ we obtain $f_{V_{\mbf{k}}} \in C(V_{\mbf{k}}, W(\mathbb{C}))$ by identifying $V_{\mbf{k}}=(V \times \mathbf{k})_1$ as in \cref{sss.ext-of-k}, and pulling back first to $V \times \mathbf{k}$ then restricting. Concretely, if $x$ is a point of degree $d$ in $V$ then it is a point of degree $d/\gcd(d,k)$ in $V_{\mbf{k}}$, and $f_{V_{\mbf{k}}}(x)=p_{k/\gcd(d,k)} \circ f(x)$. We will often write $f$ in place of $f_{V_{\mbf{k}}}$ when the domain is clear; we emphasize that this is not the same as transporting $f$ naively along the identification off the underlying sets of $V$ and $V_k$.

\subsection{Some \texorpdfstring{$\lambda$}{λ}-probability spaces}
\subsubsection{} We recall from \cite[Definition 3.1.1]{Howe.RandomMatrixStatisticsAndZeroesOfLFunctionsViaProbabilityInLambdaRings} that a {(pre-)}$\lambda$-probability space is a {(pre-)}$\lambda$-ring $R$ equipped with a $\mathbb{Z}$-linear expectation functional $\mathbb{E}: R \rightarrow C$ to another ring $C$ such that $\mathbb{E}[1_R]=1_C$. 

Given a (pre-)$\lambda$-probability space $(R, \mathbb{E})$, we refer to the elements of $R$ as random variables. Given a random variable $X \in R$, we recall from \cite[Lemma 3.2.2]{Howe.RandomMatrixStatisticsAndZeroesOfLFunctionsViaProbabilityInLambdaRings} that the $\Lambda$-distribution of $X$ as in \cite[Definition 3.1.2]{Howe.RandomMatrixStatisticsAndZeroesOfLFunctionsViaProbabilityInLambdaRings} is determined by the $\sigma$-moment generating function of $X$ \cite[Definition 3.2.1]{Howe.RandomMatrixStatisticsAndZeroesOfLFunctionsViaProbabilityInLambdaRings}. This latter is defined as
\[ \mathbb{E}[\Exp_{\sigma}(Xh_1)] \in \Lambda_C^\wedge \]
where $h_1=p_1=e_1=t_1+t_2+t_3 + \ldots$ is the first complete, power sum, and elementary symmetric function, $\Exp_{\sigma}(Xh_1)$ is computed in $\Lambda_R^\wedge$ (or equivalently in $R[[\ul{t}_{\mathbb{N}}]]$) and the expectation is applied coefficient-wise to produce an element of $\Lambda_C^\wedge$. 

\subsubsection{} Suppose $V$ is an admissible $\mathbb{Z}$-set and $V(\mathbf{1})\neq\emptyset$. Then, $V(\mathbf{k}) \neq \emptyset$ for all $k \geq 1$, so 
$[V]=(\#V(\mathbf{1}), \#V(\mathbf{2}), \ldots)$ is an invertible element of $W(\mathbb{C})$. As in \cite[\S5.5]{Howe.RandomMatrixStatisticsAndZeroesOfLFunctionsViaProbabilityInLambdaRings}, we can thus consider the $\lambda$-probability space
\[ (C(V, W(\mathbb{C})), \mathbb{E} ) \]
where the expectation functional $\mathbb{E}$ is defined by 
\[ \mathbb{E}:C(V, W(\mathbb{C})) \rightarrow W(\mathbb{C}), f \mapsto \frac{\int_V f}{[V]}. \]

\subsubsection{} Given a pre-$\lambda$-probability space with expectation $\mathbb{E}$ valued in $W(\mathbb{C})$, we write $\mbb{E}_k$ for the $\mathbb{C}$-valued expectation obtained by projecting to the $k$th ghost component. We recall that, for the $\lambda$-probability space $(C(V, W(\mathbb{C})), \mathbb{E})$ as above, $\mathbb{E}_k$ can be computed naturally on a classical finite probability space \cite[Lemma 5.5.1]{Howe.RandomMatrixStatisticsAndZeroesOfLFunctionsViaProbabilityInLambdaRings}. We now give a modified formulation that is more convenient for our purposes.  

To state it, note that we can view $V_{\mbf{k}}(\bf{1})$ as the subset of $\mathbb{Z}$-fixed points in $V_{\mbf{k}}$ 

\begin{lemma}\label{lemma.restriction-map}
Let $V$ be an admissible $\mathbb{Z}$-set with $V(\bf{1})\neq \emptyset$. Restriction as in \cref{sss.restriction-to-Vk} from $V$ to $V_{\mbf{k}}$ followed by naive restriction from $V_{\mbf{k}}$ to $V_{\mbf{k}}(\bf{1})$ induces a map of $\lambda$-probability spaces
    \[ \res_k: (C(V, W(\mathbb{C})), \mathbb{E}_k) \rightarrow (C( V_{\mbf{k}}(\mbf{1}), W(\mathbb{C})),  \mathbb{E}_1). \]
    In particular, for any random variable $X \in C(V, W(\mathbb{C}))$, 
    \[ \mathbb{E}_k\left[\Exp_{\sigma}(Xh_1)\right] = \mathbb{E}_1\left[\Exp_{\sigma}(\res_k(X)h_1)\right].\]
\end{lemma}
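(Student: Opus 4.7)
The plan is to verify separately that $\res_k$ is a $\lambda$-ring homomorphism and that it intertwines the expectations $\mbb{E}_k$ and $\mbb{E}_1$; the ``in particular'' equality of $\sigma$-moment generating functions is then immediate from the general fact that any map of $\lambda$-probability spaces preserves $\sigma$-moment generating functions (since $\Exp_{\sigma}$ is built from the $\lambda$-structure and both expectations are applied coefficient-wise to symmetric power series).

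For the $\lambda$-ring statement, I would factor $\res_k$ as $f \mapsto f_{V_{\mbf{k}}}$ followed by restriction along the inclusion $V_{\mbf{k}}(\mbf{1}) \hookrightarrow V_{\mbf{k}}$. The inclusion is a map of admissible $\mathbb{Z}$-sets (the source inherits the trivial $\mathbb{Z}$-action, which is precisely the restriction of the $V_{\mbf{k}}$-action to the fixed points), so its pullback on $W(\mbb{C})$-valued functions is a $\lambda$-ring map by the general pullback properties recalled earlier in the preliminaries. The first map is given pointwise by the explicit formula $f_{V_{\mbf{k}}}(v) = p_{k/\gcd(\deg_V(v),k)} \circ f(v)$, and is therefore a $\lambda$-ring map because each Adams operation $p_n$ is a $\lambda$-ring endomorphism of $W(\mbb{C})$ (so commutes with both multiplication and the plethystic $\Lambda$-action).

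For the intertwining, I would compute both sides in $\mbb{C}$ directly. Using the integration formula, the substitution formula $w(t^i)_k = i \cdot w_{k/i}$ (understood as zero when $i \nmid k$), and the identity $([V])_k = \#V(\mbf{k})$, the left side becomes
\[ \mbb{E}_k[f] \;=\; \frac{\sum_{|v| \in |V|,\; \deg|v| \mid k} \deg|v| \cdot f(|v|)_{k/\deg|v|}}{\#V(\mbf{k})}. \]
On the right, $V_{\mbf{k}}(\mbf{1})$ carries the trivial $\mathbb{Z}$-action and coincides setwise with $V(\mbf{k})$; for a point $v$ of $V$-degree $d$ (which, since $v$ is a $V_{\mbf{k}}$-fixed point, must divide $k$), one has $\res_k(f)(v) = p_{k/d} \circ f(v)$, whose first ghost component is $f(v)_{k/d}$. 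Summing over $V_{\mbf{k}}(\mbf{1}) = V(\mbf{k})$ and then grouping by $V$-orbit (each orbit of size $d \mid k$ supplies exactly $d$ fixed points, all taking the same value) reproduces the left-hand numerator, and the denominators $\#V_{\mbf{k}}(\mbf{1})$ and $\#V(\mbf{k})$ manifestly agree.

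The main obstacle is simply the bookkeeping of degrees, Adams operations, and ghost components through the identification $V_{\mbf{k}}(\mbf{1}) = V(\mbf{k})$; once the substitution formula and the pullback/restriction formulas are written out, no new ingredients beyond the preliminaries are required.
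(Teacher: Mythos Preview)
Your proof is correct. The paper's own proof is a one-line citation: it observes that evaluation at $1 \in \mbf{k}$ identifies $V(\mbf{k})$ with $V_{\mbf{k}}(\mbf{1})$ and then declares the lemma a reformulation of \cite[Lemma 5.5.1]{Howe.RandomMatrixStatisticsAndZeroesOfLFunctionsViaProbabilityInLambdaRings}. You instead give a direct, self-contained verification from the explicit formulas in the preliminaries: you factor $\res_k$ into a pointwise Adams-operation map and a pullback along an inclusion (both $\lambda$-ring maps, using that $p_n$ is plethystically central in $\Lambda$ so that $p_n \circ (-)$ commutes with the full $\Lambda$-action on the $\lambda$-ring $W(\mathbb{C})$), and then match the two expectations by computing the $k$th ghost component of $\int_V f$ via \cref{eq.wv-substitution-ghost} and regrouping the sum over $V(\mbf{k})$ by $V$-orbit. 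Your route has the advantage of being readable without consulting the earlier paper and of making the bookkeeping explicit; the paper's route is shorter and avoids reproving what is already in the literature.
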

\begin{proof}
Noting that evaluation at $1 \in \mbf{k}$ gives a canonical identification $V(\mbf{k})=V_{\mbf{k}}(\mbf{1})$, this is a reformulation of \cite[Lemma 5.5.1]{Howe.RandomMatrixStatisticsAndZeroesOfLFunctionsViaProbabilityInLambdaRings}.     
\end{proof}

\begin{remark}
The map $\res_k$ is a map of $W(\mathbb{C})$-algebras when $C(V,W(\mathbb{C}))$ is equipped with the  algebra structure of pullback from a point and $C(V_{\mbf{k}}(\mbf{1}), W(\mathbb{C}))$ is equipped with the algebra structure sending $a \in W(\mathbb{C})$ to the constant function with value $p_k \circ a$. In the statement of  \cite[Lemma 5.5.1]{Howe.RandomMatrixStatisticsAndZeroesOfLFunctionsViaProbabilityInLambdaRings}, this twist in the algebra structure is included in the notation by writing  $C(V(\bfk),W(\mathbb{C}))^{(k)}$ in place of $C(V(\bfk), W(\mathbb{C}))$. 
\end{remark}

\subsubsection{}\label{sss.families}We will also need to consider ``families" of $\lambda$-probability spaces: if $V \rightarrow B$ is a morphism of admissible $\mathbb{Z}$-sets admitting a section $B \rightarrow V$, then the class $[V/B] \in C(B, W(\mathbb{C}))$  of \cite[Example 5.3.5]{Howe.RandomMatrixStatisticsAndZeroesOfLFunctionsViaProbabilityInLambdaRings} sending $b \in V$ to $[V_b]$ (for $V_b$ as in \cref{sss.fiber}) is invertible and we can consider the $\lambda$-probability space
\[ (C(V, W(\mathbb{C})), \mathbb{E}_{V/B} ) \]
where the expectation functional $\mathbb{E}_{V/B}$ is defined by 
\[ \mathbb{E}_{V/B}:C(V, W(\mathbb{C})) \rightarrow C(B, W(\mathbb{C})),\; f \mapsto \frac{\int_{V/B} f}{[V/B]}. \]

\begin{lemma}\label{lemma.expectation-in-families-values}
    For $f \in C(V,W(\mathbb{C}))$, 
    \[ \left(\mathbb{E}_{V/B}[f]\right)(b)=\mathbb{E}_{V_b}(f|_{V_b}). \] 
\end{lemma}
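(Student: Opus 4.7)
The plan is to unwind the definitions and observe that the expectation map evaluated at a point $b \in B$ is literally the expectation on the fiber. Concretely, the ring structure on $C(B,W(\mathbb{C}))$ is pointwise, so for each $b \in B$ the evaluation map $\mathrm{ev}_b : C(B,W(\mathbb{C})) \to W(\mathbb{C})$ is a ring homomorphism. Since a section $B \to V$ exists, each fiber $V_b$ contains a $\mathbb{Z}$-fixed point, so $[V_b] \in W(\mathbb{C})$ is invertible (by the ghost component description recalled after \cref{example.integral-of-1-is-zeta}). Thus $[V/B]$ is pointwise invertible in $C(B,W(\mathbb{C}))$, making $\mathbb{E}_{V/B}[f]$ well-defined.

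First I would compute
\[ \left(\mathbb{E}_{V/B}[f]\right)(b) = \mathrm{ev}_b\left(\frac{\int_{V/B}f}{[V/B]}\right) = \frac{(\int_{V/B}f)(b)}{[V/B](b)}, \]
using that $\mathrm{ev}_b$ is a ring homomorphism. Next, by the definition of $[V/B]$ recalled in \cref{sss.families}, $[V/B](b) = [V_b]$. Finally, the explicit formula for integration over fibers given in the preliminaries reads
\[ \left(\int_{V/B} f\right)(b) = \sum_{|v| \in |V_b|} f(t^{\deg(|v|)}) = \int_{V_b} f|_{V_b}, \]
where on the right the degrees are computed in $V_b$ (whose $\mathbb{Z}$-action is the original one scaled by $\deg(b)$). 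Putting these together gives
\[ \left(\mathbb{E}_{V/B}[f]\right)(b) = \frac{\int_{V_b} f|_{V_b}}{[V_b]} = \mathbb{E}_{V_b}(f|_{V_b}), \]
as required.

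Essentially no genuine obstacle arises: the lemma is a direct unwinding of definitions, and all the substantive work was already packaged into the explicit formula for $\int_{V/B}$ in the preliminaries. The only minor point to verify is that $f|_{V_b}$ genuinely lies in $C(V_b, W(\mathbb{C}))$, i.e. is constant on $\mathbb{Z}$-orbits for the rescaled action on $V_b$. This is automatic because the $V_b$-orbits refine the $V$-orbits (scaling the action by $\deg(b)$ can only shrink orbits), so any function constant on $V$-orbits is automatically constant on $V_b$-orbits.
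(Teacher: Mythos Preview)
Your proof is correct and follows essentially the same approach as the paper's own proof: both unwind the definitions by applying evaluation at $b$ to the quotient $\frac{\int_{V/B}f}{[V/B]}$ and invoking the fiberwise formulas for $\int_{V/B}$ and $[V/B]$. Your additional remarks on invertibility of $[V_b]$ and on $f|_{V_b}$ lying in $C(V_b,W(\mathbb{C}))$ are sound but go slightly beyond what the paper records explicitly.
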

\begin{proof}
This follows from the analogous properties in the definitions of $\int_{V/B}$ (\cite[Definition 5.3.4]{Howe.RandomMatrixStatisticsAndZeroesOfLFunctionsViaProbabilityInLambdaRings}) and $[V/B]$ (\cite[Example 5.3.5]{Howe.RandomMatrixStatisticsAndZeroesOfLFunctionsViaProbabilityInLambdaRings}): 
    \[ \left(\mathbb{E}_{V/B}[f]\right)(b) = \frac{\left(\int_{V/B} f\right) (b)}{[V/B](b)}= \frac{\int_{V_b} f|_{V_b}} {[V_b]}=\mathbb{E}_{V_b}[f|_{V_b}]. \]
\end{proof}

\begin{lemma}\label{lemma.pullback-expectations} Consider a cartesian diagram of admissible $\mathbb{Z}$-sets
\[\begin{tikzcd}
	{V_1} & {V_2} \\
	{B_1} & {B_2}
	\arrow["\varphi", from=1-1, to=1-2]
	\arrow[from=1-1, to=2-1]
	\arrow[from=1-2, to=2-2]
	\arrow["\psi", from=2-1, to=2-2]
\end{tikzcd}\]
and suppose $V_2 \rightarrow B_2$ admits a section. Then, for $f \in C(V_2, W(\mathbb{C}))$, 
\[ \psi^* \mathbb{E}[f] = \mathbb{E}[\varphi^* f]. \]
\end{lemma}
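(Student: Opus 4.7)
The plan is to reduce the lemma to the already-established base change identity for integration over fibers (\cref{lemma.base-change}) by exploiting the fact that $\psi^*$ is a ring homomorphism and that the normalizing classes $[V/B]$ can themselves be expressed as integrals.

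First, I would observe that since $V_2 \to B_2$ admits a section, its pullback along $\psi$ provides a section of $V_1 \to B_1$, so every fiber $V_{1,b_1}$ is nonempty; in particular $[V_1/B_1]$ has invertible ghost coordinates and the expectation $\mathbb{E}_{V_1/B_1}$ makes sense. Next, I would use the identity $[V/B]=\int_{V/B}1$ (which follows directly from \cref{example.integral-of-1-is-zeta} applied fiberwise, as in \cref{lemma.expectation-in-families-values}) together with \cref{lemma.base-change} applied to the constant function $1$ to conclude
\[ \psi^*[V_2/B_2]=\psi^*\int_{V_2/B_2}1=\int_{V_1/B_1}\varphi^*1=\int_{V_1/B_1}1=[V_1/B_1], \]
using that $\varphi^*$ is a ring map and hence preserves the unit.

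Then, writing out the definition of the family expectation and using that $\psi^*$ is a ring homomorphism (\cite[Definition 5.3.4-(1)]{Howe.RandomMatrixStatisticsAndZeroesOfLFunctionsViaProbabilityInLambdaRings}), and hence commutes with forming multiplicative inverses, I would compute
\[ \psi^*\mathbb{E}_{V_2/B_2}[f]=\psi^*\!\left(\frac{\int_{V_2/B_2}f}{[V_2/B_2]}\right)=\frac{\psi^*\int_{V_2/B_2}f}{\psi^*[V_2/B_2]}=\frac{\int_{V_1/B_1}\varphi^*f}{[V_1/B_1]}=\mathbb{E}_{V_1/B_1}[\varphi^*f], \]
with the third equality applying \cref{lemma.base-change} in the numerator and the computation above in the denominator.

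There is no serious obstacle here; the content of the lemma is entirely packaged into the base change identity for integration, and the only thing to verify is that the denominator $[V/B]$ transforms compatibly, which is the case $f=1$ of that same base change identity. The only small subtlety worth flagging is ensuring the section hypothesis propagates to $V_1\to B_1$ so that both sides are well-defined, but this is immediate from the universal property of the cartesian square.
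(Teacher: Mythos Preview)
Your proof is correct and follows essentially the same approach as the paper: both reduce to \cref{lemma.base-change} applied in the numerator and, via $[V/B]=\int_{V/B}1$, in the denominator. You give more detail (propagation of the section, $\psi^*$ being a ring map hence preserving inverses), but the argument is the same.
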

\begin{proof}
We have 
\[ \psi^*\mathbb{E}_{V_2/B_2}[f]=\frac{\psi^* \int_{V_2/B_2} f}{\psi^*[V_2/B_2]}=\frac{\int_{V_1/B_1}\varphi^*f}{[V_1/B_1]}=\mathbb{E}_{V_1/B_1}[\varphi^*f],\]
where the second equality is by \cref{lemma.base-change} (note $[V/B]=\int_{V/B} 1$). 
\end{proof}

\begin{lemma}\label{lemma.expectations-change-of-k}
   For $f \in C(V,W(\mathbb{C}))$, 
    \[ \left(\mathbb{E}_{V/B}[f]\right)_{B_{\mbf{k}}}=\mathbb{E}_{V_{\mbf{k}}/B_{\mbf{k}}}[f_{V_{\mbf{k}}}]. \] 
\end{lemma}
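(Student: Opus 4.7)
The plan is to reduce the claim to two successive applications of \cref{lemma.pullback-expectations}, exploiting the definition of the restriction $(\cdot)_{V_{\mbf{k}}}$ in \cref{sss.restriction-to-Vk} as a pullback to $V \times \mbf{k}$ followed by restriction to the fiber over $1 \in \mbf{k}$.

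First, I would consider the cartesian square
\[
\begin{tikzcd}
V \times \mbf{k} \arrow[r,"\pi_V"] \arrow[d,"\psi \times \mr{id}"'] & V \arrow[d,"\psi"] \\
B \times \mbf{k} \arrow[r,"\pi_B"'] & B
\end{tikzcd}
\]
of admissible $\mathbb{Z}$-sets. Since $\psi: V \to B$ admits a section (this is implicit in the notation $\mathbb{E}_{V/B}$ per \cref{sss.families}), so does $\psi \times \mr{id}$, and therefore \cref{lemma.pullback-expectations} applies to give
\[ \pi_B^* \mathbb{E}_{V/B}[f] = \mathbb{E}_{(V \times \mbf{k})/(B \times \mbf{k})}[\pi_V^* f]. \]

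Second, I would use the identifications $V_{\mbf{k}} = (V \times \mbf{k})_1$ and $B_{\mbf{k}} = (B \times \mbf{k})_1$ from \cref{sss.ext-of-k}. The inclusions $j_V: V_{\mbf{k}} \hookrightarrow V \times \mbf{k}$ and $j_B: B_{\mbf{k}} \hookrightarrow B \times \mbf{k}$ of these fibers fit into a cartesian square
\[
\begin{tikzcd}
V_{\mbf{k}} \arrow[r,"j_V"] \arrow[d,"\psi_{\mbf{k}}"'] & V \times \mbf{k} \arrow[d,"\psi \times \mr{id}"] \\
B_{\mbf{k}} \arrow[r,"j_B"'] & B \times \mbf{k}
\end{tikzcd}
\]
(again the upper-right map has a section induced from the section of $\psi$), so another application of \cref{lemma.pullback-expectations} yields
\[ j_B^*\, \mathbb{E}_{(V \times \mbf{k})/(B \times \mbf{k})}[\pi_V^* f] = \mathbb{E}_{V_{\mbf{k}}/B_{\mbf{k}}}[j_V^* \pi_V^* f]. \]

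Combining the two displayed equalities gives
\[ j_B^* \pi_B^* \mathbb{E}_{V/B}[f] = \mathbb{E}_{V_{\mbf{k}}/B_{\mbf{k}}}[j_V^* \pi_V^* f]. \]
The final step is to identify $j_B^* \pi_B^*$ with $(\cdot)_{B_{\mbf{k}}}$ and $j_V^* \pi_V^*$ with $(\cdot)_{V_{\mbf{k}}}$, which is immediate from the description in \cref{sss.restriction-to-Vk} of the restriction as ``pulling back first to $V \times \mbf{k}$ then restricting [to $V_{\mbf{k}}$].'' The main (minor) obstacle is just keeping straight the two different notational conventions: the subscript $(\cdot)_{\mbf{k}}$ on maps and sets versus the subscript restriction $(\cdot)_{V_{\mbf{k}}}$ on functions; once these are reconciled via the definition, the proof is a formal two-step base change.
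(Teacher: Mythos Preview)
Your first step is exactly what the paper does: pull back along the projection $\pi_B: B \times \mbf{k} \to B$ and apply \cref{lemma.pullback-expectations} to obtain
\[
\pi_B^*\,\mathbb{E}_{V/B}[f] \;=\; \mathbb{E}_{(V\times\mbf{k})/(B\times\mbf{k})}[\pi_V^* f].
\]

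The problem is the second step. The inclusions $j_B: B_{\mbf{k}} \hookrightarrow B \times \mbf{k}$ and $j_V: V_{\mbf{k}} \hookrightarrow V \times \mbf{k}$ are \emph{not} morphisms of admissible $\mathbb{Z}$-sets: on $B_{\mbf{k}}$ the generator $1 \in \mathbb{Z}$ acts through the original $k$-action on $B$, whereas on $B \times \mbf{k}$ it acts by the original $1$-action (and shifts the $\mbf{k}$-coordinate). Concretely, $1 \in \mbf{k}$ is a point of degree $k$, and there is no $\mathbb{Z}$-equivariant map $\mbf{1} \to \mbf{k}$ picking it out. Thus your second square is not a diagram in the category of admissible $\mathbb{Z}$-sets, and \cref{lemma.pullback-expectations} does not apply to it. The ``restriction to the fiber'' in \cref{sss.restriction-to-Vk} is a naive restriction of functions, not a pullback $j_B^*$ in the sense of \cite[Definition 5.3.4-(1)]{Howe.RandomMatrixStatisticsAndZeroesOfLFunctionsViaProbabilityInLambdaRings}.

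The paper closes this gap differently: after the first base change it has $\bigl(\mathbb{E}_{V\times\mbf{k}/B\times\mbf{k}}[\pi_V^* f]\bigr)\big|_{(B\times\mbf{k})_1}$, and it identifies this with $\mathbb{E}_{V_{\mbf{k}}/B_{\mbf{k}}}[f_{V_{\mbf{k}}}]$ by evaluating both sides at each $b \in B_{\mbf{k}}$ and invoking \cref{lemma.expectation-in-families-values} (which says $\bigl(\mathbb{E}_{V/B}[f]\bigr)(b) = \mathbb{E}_{V_b}[f|_{V_b}]$). Your argument can be repaired by replacing the second application of \cref{lemma.pullback-expectations} with this pointwise comparison.
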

\begin{proof}
Identifying $B_{\mbf{k}}=(B \times \mbf{k})_1$ as in \cref{sss.ext-of-k} and recalling the definition in \cref{sss.restriction-to-Vk}, we apply \cref{lemma.pullback-expectations} to see
\[ \left(\mathbb{E}_{V/B}[f] \right)_{B_{\mbf{k}}} = \left.\left(\pi_B^*\mathbb{E}_{V/ B} [f]\right)\right|_{(B \times \mbf{k})_1} = \left.\left(\mbb{E}_{V\times \mbf{k}/B \times \mbf{k}} [\pi_V^* F]\right)\right|_{(B \times \mbf{k})_1} \]
where $\pi_B: B \times \mbf{k} \rightarrow B$ and $\pi_V: V \times \mbf{k} \rightarrow V$ are the projections. We then conclude by comparing the values at points with $\mathbb{E}_{V_{\mbf{k}}/B_{\mbf{k}}} [F_{V_\mbf{k}}]$ using \cref{lemma.expectation-in-families-values}.
\end{proof}

\section{Point counting motivic Euler products}\label{s.motivic-euler-products}
In this section, we define motivic Euler products for morphisms of admissible $\mathbb{Z}$-sets and then explain how to compute them using classical Euler products. 

\subsection{Motivic Euler products}
Recall from \cref{ss.admissible-z-prelim} that, for $V$ an admissible $\mathbb{Z}$-set, we have the associated $\lambda$-ring $C(V,W(\mathbb{C}))$. Recall from \cref{ss.pre-lambda-prelim} that we can then construct the filtered $\lambda$-ring of formal power series $C(V,W(\mathbb{C}))[[\ul{t}_{\mathbb{N}}]]$ with its associated $\sigma$-exponential $\Exp_\sigma$ and $\sigma$-logarithm $\Log_{\sigma}$.

\begin{definition}[Motivic Euler products]\label{def.mep}
For $V\rightarrow B$ a morphism of admissible $\mbb{Z}$-sets and 
$H \in 1 + \Fil^1 C(V,W(\mbb{C}))[[\ul{t}_{\mathbb{N}}]]$,
we define 
\[ \prod_{V/B} H := \Exp_{\sigma}\left(\int_{V/B}\Log_{\sigma}(H)\right) \in C(B, W(\mbb{C}))[[\ul{t}_{\mathbb{N}}]]. \]
When $V \rightarrow \mathbf{1}$ is the final morphism we write  
\[ \prod_{V}H= \prod_{V/\mathbf{1}}H=\Exp_{\sigma}\left(\int_V \Log_{\sigma}(H)\right).\]
\end{definition}

\begin{remark}
    The relation between this formula and the motivic Euler products of \cite{Bilu.MotivicEulerProductsAndMotivicHeightZetaFunctions} will be detailed in \cite{BiluDasHowe.SpecialValuesOfMotivicEulerProducts}, explaining the nomenclature. In the present work, we do not need to make this connection explicit, since it will suffice for our purposes to have the formula relating motivic Euler products to classical Euler products given in \cref{prop.euler-product-point-counting-formula} below (which is reproved from a different perspective in \cite{BiluDasHowe.SpecialValuesOfMotivicEulerProducts}). 
\end{remark}

\begin{example}\label{example.constant-product-is-power}
 Let $H \in W(\mathbb{C})[[\ul{t}_{\mbb{N}}]]$ with constant coefficient $1$. Then we find
 \begin{align*} \prod_V H &= \Exp_{\sigma}\left(\int_V \Log_{\sigma}(H)\right) & \textrm{ by definition} \\
 &= \Exp_{\sigma}\left(\Log_{\sigma}(H)\left(\int_V 1\right)\right) & \textrm{ by linearity of $\int_V$ }\\
 &=\Exp_{\sigma}\left(\Log_{\sigma}(H) [V]\right) & \textrm{ by \cref{example.integral-of-1-is-zeta}}\\
 &=H^{[V]} & \textrm{ by definition} \end{align*}
 as one hopes by the notation! Note that we have implicitly pulled back the coefficients of $H$ from the final object $\bf{1}$ to get an element of $C(V,W(\mbb{C}))[[\ul{t}_{\mbb{N}}]]$ --- in particular, when we are viewing $H$ as an element of $C(V,W(\mbb{C}))[[\ul{t}_{\mbb{N}}]]$ here, it is \emph{not} as the constant function on $V$ with values $H$, but rather the function that takes value $p_i \ast H$ on each degree $i$ point, where $\ast$ denotes the coefficient-wise pre-$\lambda$-ring structure as in \cref{sss.coeff-wise} (see also \cite[Example 5.3.6 and subsequent warning]{Howe.RandomMatrixStatisticsAndZeroesOfLFunctionsViaProbabilityInLambdaRings}). 
\end{example}

Motivic Euler products can be computed fiberwise (recall we defined fibers in \cref{sss.fiber}):
\begin{lemma}\label{lemma.mep-fiberwise}
Let $V\rightarrow B$ a morphism of admissible $\mbb{Z}$-sets. For any $b \in B$ and $H \in 1 + \Fil^1 C(V,W(\mbb{C}))[[\ul{t}_{\mathbb{N}}]]$,
\[ \left(\prod_{V/B} H\right)(b) = \prod_{V_b} (H|_{V_b}), \]
where evaluation at $b$ and restriction on power series are evaluated coefficient-wise.
\end{lemma}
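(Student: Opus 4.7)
The plan is to unwind the definition of $\prod_{V/B} H$ and commute the evaluation $\mathrm{ev}_b$ past the operations $\Exp_\sigma$, $\int_{V/B}$, and $\Log_\sigma$, reducing them to their counterparts on $V_b$. Explicitly, starting from the definition
\[ \left(\prod_{V/B} H\right)(b)=\mathrm{ev}_b\Exp_\sigma\!\left(\int_{V/B}\Log_\sigma(H)\right), \]
I would like to rewrite this as $\Exp_\sigma\!\left(\int_{V_b}\Log_\sigma(H|_{V_b})\right)=\prod_{V_b}(H|_{V_b})$.

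The three compatibilities needed are: (i) evaluation at $b$ on coefficients, extended coefficient-wise to $C(B,W(\mbb{C}))[[\ul{t}_{\mbb{N}}]]\to W(\mbb{C})[[\ul{t}_{\mbb{N}}]]$, is a filtered $\lambda$-ring map, hence commutes with $\Exp_\sigma$ and $\Log_\sigma$; (ii) restriction $C(V,W(\mbb{C}))\to C(V_b,W(\mbb{C}))$ (extended coefficient-wise) is a filtered $\lambda$-ring map and likewise commutes with $\Exp_\sigma$ and $\Log_\sigma$; and (iii) the fiber formula $\left(\int_{V/B} f\right)(b)=\int_{V_b}f|_{V_b}$ recalled in the bulleted description of $\int_{V/B}$ extends coefficient-wise to give $\left(\int_{V/B} F\right)(b)=\int_{V_b} F|_{V_b}$ for any $F\in C(V,W(\mbb{C}))[[\ul{t}_{\mbb{N}}]]$.

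Chaining these, the computation is
\[ \mathrm{ev}_b\Exp_\sigma\!\left(\int_{V/B}\Log_\sigma(H)\right) \stackrel{(i)}{=}\Exp_\sigma\!\left(\mathrm{ev}_b\int_{V/B}\Log_\sigma(H)\right)\stackrel{(iii)}{=}\Exp_\sigma\!\left(\int_{V_b}\bigl(\Log_\sigma H\bigr)|_{V_b}\right)\stackrel{(ii)}{=}\Exp_\sigma\!\left(\int_{V_b}\Log_\sigma(H|_{V_b})\right), \]
which equals $\prod_{V_b}(H|_{V_b})$ by definition. Note that $H\in 1+\Fil^1$ immediately gives $H|_{V_b}\in 1+\Fil^1$, so every application of $\Log_\sigma$ and $\Exp_\sigma$ is well-defined.

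The only point that requires verification is (i) and (ii): that evaluation and restriction are $\lambda$-ring maps. Both of these are immediate because the $\lambda$-ring structures on $C(V,W(\mbb{C}))$, $C(V_b,W(\mbb{C}))$, $C(B,W(\mbb{C}))$, and $W(\mbb{C})$ are all pointwise in the $\lambda$-ring structure of $W(\mbb{C})$; consequently $\mathrm{ev}_b$ and restriction to $V_b$ are $\lambda$-ring homomorphisms, and thus so are their coefficient-wise extensions to the corresponding rings of power series in $\ul{t}_{\mbb{N}}$, which preserve the degree filtration. Since $\Exp_\sigma$ and $\Log_\sigma$ are built functorially from the filtered $\lambda$-ring structure (via $\Exp_\sigma F=\sum_k h_k\circ F$ and its inverse), any such map commutes with them. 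There is no serious obstacle; this is a pure ``chase through definitions'' once the pointwise nature of the $\lambda$-structures is noted.
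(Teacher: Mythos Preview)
Your proof is correct and follows essentially the same approach as the paper's own proof: unwind the definition, commute evaluation at $b$ past $\Exp_\sigma$ (using that evaluation is a map of pre-$\lambda$-rings), use that integration is defined fiberwise, and commute restriction to $V_b$ past $\Log_\sigma$ (using that restriction is a map of pre-$\lambda$-rings). If anything, you are slightly more explicit than the paper in justifying why evaluation and restriction are $\lambda$-ring homomorphisms via the pointwise nature of the $\lambda$-structure.
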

\begin{proof}
\begin{align*} \left(\prod_{V/B} H\right)(b) & = \Exp_{\sigma}\left( \int_{V/B} \Log_\sigma(H)\right)(b) & \textrm{\parbox{4cm}{by definition}}\\
&= \Exp_{\sigma}\left( \left(\int_{V/B} \Log_\sigma(H)\right)(b)\right) & \textrm{\parbox{4cm}{because evaluation is a map of pre-$\lambda$-rings}} \\
&= \Exp_{\sigma}\left( \int_{V_b}  \Log_\sigma(H)|_{V_b}\right) & \textrm{\parbox{4cm}{because integration is defined fiberwise}}\\ 
&= \Exp_{\sigma}\left( \int_{V_b} \Log_\sigma(H|_{V_b})\right)
& \textrm{\parbox{4cm}{because the restriction is a map of pre-$\lambda$-rings}} \\
&= \prod_{V_b} H|_{V_b} & \textrm{\parbox{4cm}{by definition.}}
\end{align*}
\end{proof}

Motivic Euler products also behave well with respect to cartesian pullback. 
\begin{lemma}\label{lemma.pullback-products} Consider a cartesian diagram of admissible $\mathbb{Z}$-sets
\[\begin{tikzcd}
	{V_1} & {V_2} \\
	{B_1} & {B_2}
	\arrow["\varphi", from=1-1, to=1-2]
	\arrow[from=1-1, to=2-1]
	\arrow[from=1-2, to=2-2]
	\arrow["\psi", from=2-1, to=2-2]
\end{tikzcd}\]
For $H \in C(V_2,W(\mbb{C}))[[\ul{t}_{\mathbb{N}}]]$, 
\[ \psi^*\prod_{V_2/B_2} H = \prod_{V_1/B_1}\varphi^*H, \]
where the pullback functors are applied to power series coefficient-wise. 
\end{lemma}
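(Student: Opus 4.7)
The plan is to unwind the definition of the motivic Euler product and then chase the identity through the three ingredients: the fact that pullback is a map of $\lambda$-rings (so it commutes with $\Exp_\sigma$ and $\Log_\sigma$), the base change formula for integration along fibers (\cref{lemma.base-change}), and again the fact that pullback is a $\lambda$-ring map, all applied coefficient-wise to the power series.

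More concretely, I would argue as follows. First, unwinding the definition,
\[ \psi^*\prod_{V_2/B_2} H = \psi^* \Exp_{\sigma}\left(\int_{V_2/B_2}\Log_{\sigma}(H)\right). \]
Since $\psi^* : C(B_2, W(\mathbb{C})) \to C(B_1, W(\mathbb{C}))$ is a map of $\lambda$-rings (as recalled in the preliminaries), its coefficient-wise extension to power series commutes with $\Exp_\sigma$, so the right-hand side equals
\[ \Exp_{\sigma}\left(\psi^* \int_{V_2/B_2}\Log_{\sigma}(H)\right). \]
Applying \cref{lemma.base-change} coefficient-wise to the argument of $\Exp_\sigma$ gives
\[ \Exp_{\sigma}\left(\int_{V_1/B_1} \varphi^*\Log_{\sigma}(H)\right). \]
Finally, since $\varphi^*: C(V_2, W(\mathbb{C})) \to C(V_1, W(\mathbb{C}))$ is also a $\lambda$-ring map, it commutes with $\Log_\sigma$, giving
\[ \Exp_{\sigma}\left(\int_{V_1/B_1} \Log_{\sigma}(\varphi^*H)\right) = \prod_{V_1/B_1}\varphi^*H, \]
as desired.

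There is no real obstacle here, but the one bookkeeping issue worth flagging is ensuring that the formal operations $\Exp_\sigma$ and $\Log_\sigma$ on power series with coefficients in $C(B_i, W(\mathbb{C}))$ (resp.\ $C(V_i, W(\mathbb{C}))$) are compatible with the coefficient-wise pullback by $\psi^*$ (resp.\ $\varphi^*$). This follows from the general principle, used implicitly already in the definition of $\Exp_\sigma$ on $C(V,W(\mathbb{C}))[[\ul{t}_{\mathbb{N}}]]$, that any map of (pre-)$\lambda$-rings extends coefficient-wise to a map of the associated $\lambda$-rings of power series that intertwines $\Exp_\sigma$ and $\Log_\sigma$; so no new argument is needed. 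The filtration hypothesis on $H$ propagates to $\varphi^* H$ since pullback preserves the monomial degree filtration, ensuring all the $\sigma$-logarithms are well-defined.
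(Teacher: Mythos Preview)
Your proof is correct and follows essentially the same approach as the paper's own proof: unwind the definition, commute $\psi^*$ past $\Exp_\sigma$, apply \cref{lemma.base-change}, commute $\varphi^*$ past $\Log_\sigma$, and rewind the definition. The paper's argument is line-for-line the same chain of equalities, with the same justifications at each step.
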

\begin{proof}
We have 
\begin{align*}  \psi^*\left(\prod_{V_2/B_2} H\right) &= \psi^* \left(\Exp_{\sigma}\left(\int_{V_2/B_2} \Log_{\sigma}(H)\right)\right) & \textrm{\parbox{4cm}{by definition}} \\
& = \Exp_{\sigma}\left(\psi^* \int_{V_2/B_2} \Log_{\sigma}(H)\right) &\textrm{\parbox{4cm}{because $\psi^*$ is a map of pre-$\lambda$-rings}}\\
& = \Exp_{\sigma}\left( \int_{V_1/B_1} \varphi^* \Log_{\sigma}(H) \right) & \textrm{\parbox{4cm}{by \cref{lemma.base-change}}}\\
& = \Exp_{\sigma}\left( \int_{V_1/B_1} \Log_{\sigma}(\varphi^*H)\right) & \textrm{\parbox{4cm}{because $\varphi^*$ is a map of pre-$\lambda$-rings}}\\
& = \prod_{V_1/B_1} \varphi^*H & \textrm{\parbox{4cm}{by definition.}}  
\end{align*} 
\end{proof}

Combing these two lemmas, we deduce:
\begin{lemma}\label{lemma.mep-ext-of-k}
Let $V\rightarrow B$ be a map of admissible $\mathbb{Z}$-sets, and let $H \in C(V, W(\mathbb{C})[[\ul{t}_{\mbb{N}}]]$. For any $k \geq 1$, 
    \[ \left(\prod_{V/B} H\right)_{B_{\mbf{k}}}=\prod_{V_{\mbf{k}}/B_{\mbf{k}}} H_{V_{\mbf{k}}} \]
where the restriction $H_{V_{\mbf{k}}}$ is formed coefficient-wise as in \cref{sss.restriction-to-Vk}. 
\end{lemma}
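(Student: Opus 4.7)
The plan is to mirror the proof of \cref{lemma.expectations-change-of-k}, substituting each invocation of \cref{lemma.pullback-expectations} with its motivic Euler product analog, \cref{lemma.pullback-products}. The basic observation is that, per \cref{sss.ext-of-k}, the set $V_{\mbf{k}}$ is the fiber over $1 \in \mbf{k}$ of the projection $V \times \mbf{k} \to \mbf{k}$, and analogously for $B_{\mbf{k}}$. This produces a chain of cartesian squares
\[
\begin{tikzcd}
V_{\mbf{k}} \ar[r] \ar[d] & V \times \mbf{k} \ar[r,"\pi_V"] \ar[d] & V \ar[d] \\
B_{\mbf{k}} \ar[r] & B \times \mbf{k} \ar[r,"\pi_B"] & B
\end{tikzcd}
\]
in which the right square is cartesian because forming products commutes with fiber products, and the left square is cartesian by the construction of fibers in \cref{sss.fiber}.

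Applying \cref{lemma.pullback-products} to the outer rectangle (or equivalently to each square in turn), and recalling that $(-)_{B_{\mbf{k}}}$ on a function or power series is defined as $\pi_B^*$ followed by coefficient-wise restriction to the fiber over $1 \in \mbf{k}$, the left-hand side of the claimed identity rewrites as
\[ \left(\prod_{V/B} H\right)_{B_{\mbf{k}}} = \left(\pi_B^* \prod_{V/B} H\right)\bigg|_{B_{\mbf{k}}} = \prod_{V_{\mbf{k}}/B_{\mbf{k}}} \bigl(\pi_V^* H\bigr)\big|_{V_{\mbf{k}}}. \]
By the definition in \cref{sss.restriction-to-Vk} extended coefficient-wise to power series, the integrand on the right is precisely $H_{V_{\mbf{k}}}$, giving the desired equality.

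There is no substantive obstacle here: once the definition of $(-)_{B_{\mbf{k}}}$ is unwound as a two-step pullback–restriction along the chain $B_{\mbf{k}}\hookrightarrow B \times \mbf{k}\to B$, the result is an immediate double application of \cref{lemma.pullback-products}. The only small bookkeeping point is to verify that $H_{V_{\mbf{k}}}$ really does agree with the composite cartesian pullback when applied coefficient-wise — but this is automatic because $\pi_V^*$ and restriction along the cartesian inclusion $V_{\mbf{k}} \hookrightarrow V \times \mbf{k}$ are both maps of pre-$\lambda$-rings, and hence commute with the coefficient-wise formation of power series used throughout \cref{def.mep} and \cref{lemma.pullback-products}.
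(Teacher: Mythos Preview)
Your overall strategy matches the paper's exactly, but there is a genuine technical slip in how you justify the second step. The left square in your diagram is \emph{not} a cartesian square of admissible $\mathbb{Z}$-sets: the point $1 \in \mbf{k}$ has degree $k$, so there is no map $\mbf{1} \to \mbf{k}$ of admissible $\mathbb{Z}$-sets picking it out (unless $k=1$), and hence no horizontal arrows $B_{\mbf{k}} \to B \times \mbf{k}$ or $V_{\mbf{k}} \to V \times \mbf{k}$ in this category. Thus \cref{lemma.pullback-products} cannot be applied to the left square or to the outer rectangle. Your sentence ``the left square is cartesian by the construction of fibers in \cref{sss.fiber}'' is therefore false as stated.

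The fix is exactly what the proof of \cref{lemma.expectations-change-of-k} actually does (and what the paper does here too): apply \cref{lemma.pullback-products} only to the right square to obtain
\[
\pi_B^*\prod_{V/B} H = \prod_{V\times \mbf{k}/B \times \mbf{k}} \pi_V^* H,
\]
and then handle the passage from $B \times \mbf{k}$ to the fiber $(B \times \mbf{k})_1 = B_{\mbf{k}}$ by invoking \cref{lemma.mep-fiberwise} pointwise, comparing values at each $b \in B_{\mbf{k}}$ with those of $\prod_{V_{\mbf{k}}/B_{\mbf{k}}} H_{V_{\mbf{k}}}$. In other words, when you said you would mirror \cref{lemma.expectations-change-of-k} by substituting \cref{lemma.pullback-expectations} with \cref{lemma.pullback-products}, you also need to substitute \cref{lemma.expectation-in-families-values} with \cref{lemma.mep-fiberwise}; the latter substitution is what is missing.
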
 
\begin{proof}
Identifying $B_{\mbf{k}}=(B \times \mbf{k})_1$ as in \cref{sss.ext-of-k} and using the definition in \cref{sss.restriction-to-Vk}, we apply \cref{lemma.pullback-products} to see
\[ \left(\prod_{V/B} H\right)_{B_{\mbf{k}}} = \left.\left(\pi_B^*\prod_{V/ B} H\right)\right|_{(B \times \mbf{k})_1} = \left.\left(\prod_{V\times \mbf{k}/B \times \mbf{k}} \pi_V^* H\right)\right|_{(B \times \mbf{k})_1} \]
where $\pi_B: B \times \mbf{k} \rightarrow B$ and $\pi_V: V \times \mbf{k} \rightarrow V$ are the projections. We then conclude by comparing the values at points with $\prod_{V_{\mbf{k}}/B_{\mbf{k}}} F_{V_\mbf{k}}$ using \cref{lemma.mep-fiberwise}.

\end{proof}

\subsection{Evaluation of motivic Euler products using classical Euler products}
The following proposition gives a description of motivic Euler products in terms of classical Euler products of series in $\mathbb{C}[[\ul{t}_I]]$. In light of \cref{example.constant-product-is-power}, it is a generalization of \cite[Proposition 5.2.7]{Howe.RandomMatrixStatisticsAndZeroesOfLFunctionsViaProbabilityInLambdaRings}. To state it, for $H$ a power series with coefficients in $W(\mathbb{C})$ and $i \geq 1$, we write $H_i$ for the power series $\mathbb{C}[[\ul{t}_I]]$ obtained by taking the $i$th ghost coordinate of all coefficients (note $H \mapsto H_i$ is a ring homomorphism). 

In the following, we use the convention for restriction from $V$ to $V_{\mbf{k}}$ as in \cref{sss.restriction-to-Vk} (applied coefficient-wise to power series). 
\begin{proposition}\label{prop.euler-product-point-counting-formula}
Let $V$ be an admissible $\mathbb{Z}$-set and let $H \in C(V,W(\mbb{C}))[[\ul{t}_{\mathbb{N}}]]$. 
\begin{align}\label{eq.euler-prod-k-formulas} \left(\prod_{V} H\right)_k&= \prod_{|v| \in |V_{\mbf{k}}|} H(|v|)_1(\ul{t}^{\deg(|v|)})\\
\nonumber &= \prod_{|v| \in |V|} \left(H(|v|)_{k/\gcd(k,\deg(|v|))}(\ul{t}^{\deg(|v|)/\gcd(k, \deg(|v|)}) \right)^{\gcd(k,\deg(|v|))}. \end{align} 
\end{proposition}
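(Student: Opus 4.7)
The plan is to first reduce to the case $k=1$ via \cref{lemma.mep-ext-of-k}, then establish that case by expanding the plethystic exponential using its power-sum formula. Taking $B=\mbf{1}$ in \cref{lemma.mep-ext-of-k} (so that $\mbf{1}_{\mbf{k}}=\mbf{1}$ and the restriction $(\cdot)_{\mbf{1}_{\mbf{k}}}$ acts coefficient-wise as $p_k\circ$), and projecting to the first ghost via $(p_k\circ w)_1=w_k$, one obtains
\[ \left(\prod_V H\right)_k = \left(\prod_{V_{\mbf{k}}} H_{V_{\mbf{k}}}\right)_1. \]
Thus it suffices to prove the first equality in \cref{eq.euler-prod-k-formulas} when $k=1$.

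The classical identity $\sum_n h_n = \exp(\sum_{n\geq 1} p_n/n)$ in $\Lambda$ promotes to $\Exp_\sigma(f) = \exp(\sum_{n\geq 1}(p_n\circ f)/n)$ in any pre-$\lambda$-ring, for $f \in \Fil^1$. The first-ghost projection $(\cdot)_1$ is a ring homomorphism, hence commutes with $\exp$; combined with the direct computation $(p_n \circ f)_1 = f_n(\ul{t}^n)$ (using $p_n \circ t_j = t_j^n$ coefficient-wise together with $(p_n \circ w)_1 = w_n$ for $w \in W(\mbb{C})$), this yields
\[ \Exp_\sigma(f)_1 = \exp\left(\sum_{n\geq 1} \frac{f_n(\ul{t}^n)}{n}\right). \]
Applying this identity to $G = \Exp_\sigma(\Log_\sigma G)$ and taking logarithms produces the companion identity $\log(G_1(\ul{s})) = \sum_{m\geq 1} \Log_\sigma(G)_m(\ul{s}^m)/m$ for any $G \in 1+\Fil^1$.

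Applying the displayed identity with $f = \int_V \Log_\sigma H$, and computing $(\int_V \Log_\sigma H)_n$ directly from the definition of $\int_V$ together with the Witt-vector formulas of \cref{sss.witt} (the essential point being that the $n$th ghost of the substitution $w \mapsto w(s^d)$ vanishes unless $d \mid n$), the reindexing $n = md$ gives
\[ \sum_{n\geq 1} \frac{(\int_V \Log_\sigma H)_n(\ul{t}^n)}{n} = \sum_{|v| \in |V|} \sum_{m \geq 1} \frac{\Log_\sigma(H(|v|))_m(\ul{t}^{m\deg(|v|)})}{m}. \]
The companion identity with $G = H(|v|)$ and $\ul{s} = \ul{t}^{\deg(|v|)}$ collapses the inner sum to $\log(H(|v|)_1(\ul{t}^{\deg(|v|)}))$; exponentiating then yields the first equality of \cref{eq.euler-prod-k-formulas} in the $k=1$ case, and hence in general.

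For the second equality, each orbit $|v|$ of degree $d$ in $V$ decomposes into $\gcd(d,k)$ orbits of degree $d/\gcd(d,k)$ in $V_{\mbf{k}}$, and by \cref{sss.restriction-to-Vk} the restriction acts coefficient-wise as $H_{V_{\mbf{k}}}(|v|) = p_{k/\gcd(d,k)} \circ H(|v|)$, whose first ghost is $H(|v|)_{k/\gcd(d,k)}$. Grouping the factors from orbits of $V_{\mbf{k}}$ above each $|v| \in |V|$ converts the first equality into the second. The main obstacle here is substitution bookkeeping: one must carefully distinguish the Witt-vector ``inner variable'' $s$ from the outer variables $\ul{t}_{\mbb{N}}$ and track the interplay between $n$th ghost projections of coefficients and the substitution $\ul{t} \mapsto \ul{t}^n$.
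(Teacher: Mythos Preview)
Your proof is correct and takes a genuinely different route from the paper's. The paper first reduces, via the multiplicativity of $\Exp_\sigma$, to the case of a single orbit $V=\mbf{d}$, and then carries out a fairly long ghost-coordinate computation of $\Exp_\sigma(L(t^d))_k$ directly for general $k$, tracking the interaction between the Witt-vector substitution $(t\mapsto t^d)$, the Adams operations $p_j$, and the coefficient-wise action $p_k\ast$. You instead reduce first to $k=1$ by invoking \cref{lemma.mep-ext-of-k} with $B=\mbf{1}$ (a reduction the paper notes in the subsequent remark but does not use in the proof itself), and then handle the $k=1$ case globally via the single identity $\Exp_\sigma(f)_1=\exp\bigl(\sum_{n\geq 1} f_n(\ul{t}^n)/n\bigr)$ together with its logarithmic companion. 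Your computation of $(\int_V\Log_\sigma H)_n$ using \cref{eq.wv-substitution-ghost} and the reindexing $n=m\deg(|v|)$ is clean and correct, as is your derivation of the second equality from the first via the orbit decomposition of \cref{sss.restriction-to-Vk}. The payoff of your approach is a shorter and more transparent argument that avoids the long chain of equalities in the paper's single-orbit computation; the paper's approach, in turn, is more self-contained and establishes the general-$k$ formula without appealing to the base-change machinery of \cref{lemma.mep-ext-of-k}.
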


\begin{remark}
    Comparing the expressions in \cref{prop.euler-product-point-counting-formula} for $\left(\prod_{V} H\right)_k$ and $\left(\prod_{V_k} H\right)_1$, one finds that they agree. This is implied already by \cref{lemma.mep-ext-of-k}. 
\end{remark}

\begin{example}
    In the case that the admissible $\mathbb{Z}$-set is $Y(\overline{\mathbb{F}}_q)$ for $Y/\mathbb{F}_q$ a variety and $H$ is a power series whose coefficients are the classes associated to varieties $X_{\ul{j}}/Y$, $[X_{\ul{j}}(\fqbar)/Y(\fqbar)]$ as in \cite[Example 5.3.5]{Howe.RandomMatrixStatisticsAndZeroesOfLFunctionsViaProbabilityInLambdaRings}, \cref{prop.euler-product-point-counting-formula} says
\[ \left(\prod_{Y(\overline{\mathbb{F}}_q)} \sum_{\ul{j}} [X_{\ul{j}}(\fqbar)/V(\fqbar)] \ul{t}^{\ul{j}}\right)_k = \prod_{y \in |Y_{\mathbb{F}_{q^k}}|} \sum_{\ul{j}} \#X_{\ul{j}, \kappa(y)}(\kappa(y)) t^{\ul{j}\deg(y)}  \]
where $\kappa(y)$ denotes the residue field at the  closed point $y$ of $Y_{\mathbb{F}_{q^k}}$.
\end{example}

\begin{proof}[Proof of \cref{prop.euler-product-point-counting-formula}]
The equality of the two infinite products on the right of \cref{eq.euler-prod-k-formulas} follows from the definition of the restriction of the coefficients of $H$ in \cref{sss.restriction-to-Vk} and the fact that each orbit of degree $d$ in $V$ splits into $\gcd(d,k)$ orbits of degree $d/\gcd(d,k)$ in $V_k$. 

We now show these infinite products agree with the $k$th component of the motivic Euler product.    We first note
    \begin{align*}
   \Exp_{\sigma}\left(\int_V \Log_{\sigma}(H)\right) &= \Exp_{\sigma}\left(\sum_{|v| \in |V|} (\Log_{\sigma}(H(|v|))(t^{\deg(|v|)})\right)\\
   &= \prod_{|v| \in |V|} \Exp_{\sigma} \left(\Log_{\sigma}(H(|v|))(t^{\deg(|v|)})\right). \\
   &= \prod_{|v| \in |V|} \prod_{|v|} H|_{|v|}.\end{align*}
Thus it suffices to assume $V=\mathbf{d}$ is a single orbit. In this case we are trying to compute the $k$th component of
\[ \prod_{\bf{d}} H = \Exp_{\sigma}\left(\int_{\mbf{d}}\Log_{\sigma}(H)\right) = \Exp_{\sigma}\left(\left(\Log_{\sigma}(H(\mbf{d}))\right)(t^d)\right) \]  
where the substitution of $t^d$ for an element of $W(\mathbb{C})$ is as in \cref{sss.witt} and here it is performed coefficient-wise (note that we are writing $t$ for the variable in $W(\mathbb{C})=1+t\mathbb{C}[[t]]$ while we write $t_i$ for the power series variables!), and the computation of $\int_{\mbf{d}}$ follows from the definition \cite[Definition 5.3.2]{Howe.RandomMatrixStatisticsAndZeroesOfLFunctionsViaProbabilityInLambdaRings}. 

Thus, for $L:=\Log_{\sigma}(H(\mathbf{d}))$ and $M:= L(t^d)$,  we are trying to compute $\Exp_{\sigma}(M)$. We note that, by \cref{eq.wv-substitution-ghost}, $M_j=dL_{j/d}$ --- in particular, this is zero for $d \nmid j$. Let $\mu=\frac{\lcm(k,d)}{k}= \frac{d}{\gcd(k,d)}$ and let $\nu=\frac{\lcm(k,d)}{d}=\frac{k}{\gcd(k,d)}$. Using the expansion of $\Exp_\sigma$ in \cite[Lemma 2.5.4]{Howe.RandomMatrixStatisticsAndZeroesOfLFunctionsViaProbabilityInLambdaRings}, we find (below we note that $\ast$ is the coefficient wise pre-$\lambda$ structure as in \cref{sss.coeff-wise}):
\begin{align*} \Exp_{\sigma} \left(L(t^d)\right)_k &=  \prod_{j \geq 1} \exp \left(\frac{p_j \circ M}{j}\right)_{k}\\
&= \prod_{j \geq 1} \exp \left( \frac{(p_j\circ M)_k}{j} \right)\\
&= \prod_{j \geq 1} \exp \left( \frac{(p_j\circ p_k \ast M)_1}{j} \right)\\
&=\prod_{i \geq 1} \exp\left( \frac{ (p_{i\mu} \circ p_k \ast M)_1}{i\mu}\right).
\end{align*}
Here in the second equality we have used that passing to ghost components commutes with ring operations, and in the fourth equality we have used that $M_{n}=0$ if $d \nmid n$. Continuing by factoring out a $p_i \circ$ in each term, we obtain

\begin{align*}
&=\prod_{i \geq 1} \exp\left( \frac{ (p_{i} \circ (p_{\mu} \circ p_k \ast M))_1}{i\mu}\right)\\
&=\prod_{i \geq 1} \exp\left( \frac{ (p_{i} \circ (p_{k\mu} \ast M(\ul{t}^{\mu})))_1}{i\mu}\right) \\
&=\prod_{i \geq 1} \exp\left( \frac{ (p_i \circ(p_{k\mu/d}\ast L)(\ul{t}^{\mu})) }{i\mu/d}\right)_1 \\
&=\prod_{i \geq 1} \exp\left( \frac{d}{\mu}\frac{ p_i \circ (p_{\nu}\ast L)(\ul{t}^{\mu}) }{i}\right)_1
\end{align*}
where on the second line we have used that $p_\mu \circ$ acts as $p_{\mu} \ast$ followed by substitution of $t_i^{\mu}$ for $t_i$. Continuing by pulling out the integer multiple from the exponential as a power, we obtain
\begin{align*}
&=\prod_{i \geq 1} \exp\left( \frac{ p_i \circ (p_{\nu}\ast L)(\ul{t}^{\mu}) }{i}\right)^{\frac{d}{\mu}}_1 \\
&=\left(\Exp_{\sigma}(p_{\nu}\ast L(\ul{t}^{\mu}))^{d/\mu}\right)_1\\
&=\left(p_{\nu}\ast \Exp_{\sigma}( L)(\ul{t}^{\mu})^{d/\mu}\right)_1 \\
&=\left(p_{\nu}\ast H(\mbf{d})(\ul{t}^\mu)\right)^{d/\mu}_1 \\
&=  (H(\mbf{d})_\nu(\ul{t}^\mu))^{d/\mu}.
\end{align*}
Now, $\mathbf{d}_{\mathbf{k}}$ consists of $d/\mu=\gcd(k,d)$ orbits of degree $\mu$, and $\nu=k/\gcd({k,d})$, so we conclude. 
\end{proof}

\section{Equidistribution and independence}\label{s.equidistribution-and-independence}

In this section, we define our notion of equidistribution (\cref{def.equidistributes}), then prove our main abstract result on the computation of asymptotic moment generating functions in the presence of equidistribution, \cref{theorem.abstract-independence}. 

\subsection{Equidistributing families}
\begin{definition}\label{def.equidistributes}
Let $A \rightarrow B$ be a morphism of admissible $\mathbb{Z}$-sets admitting a section $B\rightarrow A$. Let $I$ be a directed set, and for each $d \in I$, suppose given an admissible $\mathbb{Z}$-set $U_d$ and a morphism $\ev_d: U_d \times B \rightarrow A$ of admissible $\mathbb{Z}$-sets over $B$.

\begin{itemize}
\item For any positive integer $k$ and admissible $\mbb{Z}$-subset $B' \subseteq B_\mbf{k}$, we define 
\begin{align*} \ev_{d,B'}: U_{d,\mbf{k}}( \mbf{1}) &\rightarrow \Hom_{B_{\mbf{k}}}(B',  A_{\mbf{k}})=\prod_{|b| \in |B'|}\Hom_{B_{\mbf{k}}}(|b|, A_{\mbf{k}}) \\ 
u &\mapsto \bigl(b  \mapsto (\ev_{d,\mbf{k}}(u,b) \bigr).\end{align*}
where, in the bottom formula, $u$ is viewed as an element of $U_{d,\mbf{k}}$.
\item We say $(U_d, \ev_d)$ equidistributes on $A/B$ if, for any $k \geq 1$ and any non-empty admissible $\mathbb{Z}$-subset $B' \subseteq B_{\mbf{k}}$ of finite cardinality,
\begin{equation}\label{eq.equidist}
\lim_{d \in I} \left( ({\ev_{d, B'}})_* \mu_{U_{d,\mbf{k}}(\mbf{1})} \right)= \mu_{\Hom_{B_\mbf{k}}(B', A_\mbf{k})}
\end{equation}
where, for any finite set $Z$, $\mu_Z$ is the uniform probability measure on $Z$.
\end{itemize}

\end{definition}

\begin{example}[Poonen's Bertini]
Let $Y$ be a smooth, quasi-projective subscheme of $\bP^n_{\bF_q}$ of dimension $m$.
Write $S$ for $\bF_q[x_0,\dots,x_n]$ and $S_d$ for the set of degree $d$ homogeneous polynomials in $S$.
For any field extension $L/\bF_q$, write $S(L)$ and $S_d(L)$ for $S\otimes_{\bF_q} L$ and $S_d\otimes_{\bF_q} L$, respectively.
For each point $P \in \mathbb{P}^n(\fqbar)$, we fix a $j_{P}$ such that $x_{j_{P}}$ does not vanish at $P$; we make this choice so that $j_{P}$ is constant on orbits. For $F \in S_d(\Fqbar)$, we write $F_P$ for the image of $F/x_{j_P}^d$ in $\sheaf{O}_{\bP^n_{\Fqbar},P}/\frakm_P^2$.

Set $B=\bP^n_{\bF_q}(\Fqbar)$ and for each $P\in B$, let $A_P$ be the subset of $\sheaf{O}_{\bP^n_{\Fqbar},P}/\frakm_P^2$ such that 
  \[
    A_P = 
    \begin{cases}
      \{g_P\in \sheaf{O}_{\bP^n_{\Fqbar},P}/\frakm_P^2\mid \text{image of $g_P$ in $\sheaf{O}_{Y_{\Fqbar},P}/\frakm_P^2$ is nonzero}\} & P\in Y \\
      \sheaf{O}_{\bP^n_{\Fqbar},P}/\frakm_P^2 & P\notin Y
    \end{cases}
  \]
Set $A=\bigsqcup_{P\in B}A_P$.
Let $U_d$ be the set of $F\in S_d(\Fqbar)$ such that for all $P\in B$, the image of $F$ in $\sheaf{O}_{\bP^n_{\Fqbar},P}/\frakm_P^2$ lies in $A_P$; in other words, these are the polynomials such that their scheme-theoretic vanishing set $V(F)$ intersects $Y$ transversely. 
Each of $A$, $B$, and $U_d$ are admissible $\bZ$-sets with the geometric Frobenius action.

Let $A\to B$ be the map $F_P\mapsto P$.
For an admissible $\bZ$-subset $B'\subseteq B_{\bfk}$ of finite cardinality, we have
\begin{equation*}
  \Hom_{B_{\bfk}}(B',A_{\bfk})
  = \prod_{|P|\in |B'|} A_{|P|}
\end{equation*}
where $|B'|$ is naturally viewed as a subset of $|\bP^n_{\bF_{q^k}}|$ and $A_{|P|}$ is canonically identified with a subset of  $\sheaf{O}_{\bP^n_{\bF_{q^k}},|P|}/\frakm_{|P|}^2$. 

Viewing $U_{d,\bfk}(\bfone)$ as the set of $F\in S_d(\bF_{q^k})$ such that the image of $F$ in $\sheaf{O}_{Y_{\bF_{q^k}},|P|}/\frakm_{|P|}^2$ is nonzero for all $|P|\in|\bP^n_{\bF_{q^k}}|$, the map $\ev_{d,B'}$ sends $F$ to the tuple $(F_{|P|})_{|P|\in|B'|}$.
It is a consequence of Poonen's sieve as in \cite{Poonen.BertiniTheoremsOverFiniteFields} that $(U_d, \ev_d)$ equidistributes on $A/B$; this will be explained in greater generality in \cref{ss.homog.poly}.

\end{example}

\subsection{Asymptotic \texorpdfstring{$\sigma$}{σ}-moment generating functions}

Suppose $(U_d, \ev_d)$ equidistributes in $A/B$  as in \cref{def.equidistributes}. Given a function $\mathcal{X} \in C(A, W(\mathbb{C}))$, for any $b \in B$, we obtain a random variable on the fiber $U_d \times b$ (with $\mathbb{Z}$-action multiplied by $k=\deg(b)$, i.e. $U_{d,\mbf{k}}$) by restricting $\ev_d^{-1} \mathcal{X}$. Thus we may view $\ev_d^{-1} \mathcal{X}$ as a family of random variables on $U_d$ parameterized by $B$, and then take their ``sum" by integrating over $B$ to obtain $X_d:=\int_{B \times U_d/U_d} B$. The term equidistribution suggests that, as $d\rightarrow \infty$, the random variables in the family $\ev_d^{-1} \mathcal{X}$ will behave as if they are independent, so that one expects the moment generating function of this ``sum" $X_d$ to approach the ``product" of the moment generating functions of the random variables in the family. Moreover, one expects to be able to compute the terms in this ``product": the moment generating functions of the random variable $\ev_d^{-1} \mathcal{X}|_{X \times b}$ should converge as $d \rightarrow  \infty$ to the moment generating function of $\mc{X}|_{A_b}$. The following result makes this intuition precise using motivic Euler products:

\begin{theorem}\label{theorem.abstract-independence}
   Let $A \rightarrow B$ be a morphism of admissible $\mathbb{Z}$-sets admitting a section. Suppose given a directed set $I$ and for each $d \in I$, suppose given an admissible $\mathbb{Z}$-set $U_d$ and a morphism $\ev_d: U_d \times B \rightarrow A$ of admissible $\mathbb{Z}$-sets over $B$.    If $(U_d, \ev_d)$ equidistributes on $A/B$ (\cref{def.equidistributes}) then, for any $\mc{X} \in C(A, W(\mbb{C}))$, letting 
   \[ \mc{X}_d:=\ev_d^*\mc{X} \in C(U_d \times B, W(\mbb{C})) \textrm{ and } X_d:=\int_{U_d \times B / U_d} \mc{X}_d,\]
   we have
    \begin{align}\label{eq.main-abstract-theorem-first-line} \lim_{d \in I} \mbb{E}_{U_d}[\Exp_{\sigma} (X_d h_1)] & = \lim_{d \in I} \mbb{E}_{U_d}\left[\prod_{U_d \times B / U_d} \Exp_{\sigma}(\mc{X}_d h_1)\right]\\
   \label{eq.main-abstract-theorem-second-line} & = \prod_{B} \lim_{i \in I} \mbb{E}_{U_d \times B/B}[\Exp_{\sigma}(\mc{X}_d h_1)] \\
  \label{eq.main-abstract-theorem-third-line}  & = \prod_{B} \mathbb{E}_{A/B}\left[\Exp_{\sigma} (\mc{X} h_1) \right] .\end{align}
\end{theorem}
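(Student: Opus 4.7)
The plan is to attack the three equalities in sequence.

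The first equality \cref{eq.main-abstract-theorem-first-line} should be purely formal. Since $\int_{U_d \times B/U_d}$ is $C(U_d, W(\mbb{C}))$-linear, we have $X_d h_1 = \int_{U_d \times B/U_d}(\mc{X}_d h_1)$, so
\[ \Exp_\sigma(X_d h_1) = \Exp_\sigma\left(\int_{U_d \times B/U_d}\Log_\sigma(\Exp_\sigma(\mc{X}_d h_1))\right) = \prod_{U_d \times B/U_d} \Exp_\sigma(\mc{X}_d h_1) \]
by definition of the motivic Euler product. Applying $\mbb{E}_{U_d}$ gives \cref{eq.main-abstract-theorem-first-line}.

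To prove \cref{eq.main-abstract-theorem-third-line}, my plan is to reduce to a computation at each ghost component. Convergence in $\Lambda_{W(\mbb{C})}^\wedge$ is checked coefficient-by-coefficient, and convergence in $W(\mbb{C})$ ghost-by-ghost, so it suffices to show, for each $k \geq 1$ and each $N \geq 0$, convergence modulo $\Fil^{N+1}$ at ghost $k$. I would apply \cref{lemma.mep-ext-of-k} together with \cref{prop.euler-product-point-counting-formula} to rewrite the $k$-th ghost of the right-hand Euler product as a classical product over the orbits $|b| \in |B_\mbf{k}|$. Modulo $\Fil^{N+1}$, only the finitely many orbits with $\deg(|b|) \leq N$ contribute; these form a finite admissible $\mbb{Z}$-subset $B'_{k,N} \subseteq B_\mbf{k}$. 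Combining the same two lemmas with \cref{lemma.expectations-change-of-k} and \cref{lemma.expectation-in-families-values} identifies the $|b|$-th factor on the right with the classical expectation of $\Exp_\sigma(\mc{X}|_{A_{|b|}} h_1)$ against the uniform measure on $A_{|b|}$. On the left, the same lemmas express the $k$-th ghost as a classical $\mbb{E}_1$-expectation over $U_{d,\mbf{k}}(\mbf{1})$ of an analogous product, whose truncation modulo $\Fil^{N+1}$ depends on $u$ only through $\ev_{d, B'_{k,N}}(u)$. Equidistribution (\cref{def.equidistributes}) applied to $B'_{k,N}$ says the pushforward of the uniform measure on $U_{d,\mbf{k}}(\mbf{1})$ tends to the uniform measure on $\prod_{|b| \in |B'_{k,N}|} A_{|b|}$, under which the factors become independent and marginally uniform. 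Hence the truncated expectation converges to the truncated product on the right, giving \cref{eq.main-abstract-theorem-third-line}.

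The middle equality \cref{eq.main-abstract-theorem-second-line} should then fall out of the same analysis. Evaluating the right-hand Euler product fiberwise (\cref{lemma.mep-fiberwise}) and unwinding via \cref{prop.euler-product-point-counting-formula}, each local factor equals the limit over $d$ of the $|b|$-th value of $\mbb{E}_{U_d \times B/B}[\Exp_\sigma(\mc{X}_d h_1)]$ (restricted along $B_\mbf{k} \to B$ via \cref{lemma.expectations-change-of-k}), and equidistribution applied to the singleton $\{|b|\}$ identifies this limit with $\mbb{E}_{A_{|b|}}[\Exp_\sigma(\mc{X}|_{A_{|b|}} h_1)]$, matching the factors already computed in the previous step.

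The main obstacle I anticipate is commuting the $d \to \infty$ limit with the \emph{a priori} infinite motivic Euler product $\prod_B$. The hypothesis \cref{def.equidistributes} is only a statement about finite admissible $\mbb{Z}$-subsets of $B_\mbf{k}$, so the whole argument rests on the observation that, once one reduces to a single ghost and truncates at a fixed monomial degree, only finitely many orbits contribute. After this truncation, the remaining content is the classical statement that convergence of pushforward measures to a product measure implies convergence of expectations of bounded products to products of expectations. The compatibility lemmas of \cref{s.motivic-euler-products} (especially \cref{lemma.mep-ext-of-k} and \cref{prop.euler-product-point-counting-formula}) are what make this transition from ``motivic Euler product'' to ``truncated classical product over a finite piece of $B_\mbf{k}$'' clean.
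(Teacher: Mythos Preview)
Your proposal is correct and follows essentially the same route as the paper's proof: the formal identity for \cref{eq.main-abstract-theorem-first-line}, then a ghost-by-ghost and degree-by-degree reduction using \cref{prop.euler-product-point-counting-formula} and \cref{lemma.mep-ext-of-k} so that only a finite subset $B' \subseteq B_{\mbf{k}}$ matters, followed by the equidistribution hypothesis to replace the pushforward measure by the uniform product measure, and finally \cref{lemma.expectation-in-families-values} and \cref{lemma.expectations-change-of-k} to reassemble into the motivic Euler product on the right. The one ingredient you invoke implicitly but do not name is \cref{lemma.restriction-map}, which is precisely the statement that the $k$th ghost of $\mbb{E}_{U_d}[\,\cdot\,]$ equals $\mbb{E}_1$ over $U_{d,\mbf{k}}(\mbf{1})$ applied to $\res_k(\,\cdot\,)$; this is what makes your sentence ``the same lemmas express the $k$-th ghost as a classical $\mbb{E}_1$-expectation over $U_{d,\mbf{k}}(\mbf{1})$'' go through.
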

\begin{proof} We first note that
\begin{align*} \Exp_{\sigma} (X_d h_1) &= \Exp_{\sigma} \left(\int_{U_d \times B/ U_d} \mc{X}_d h_1\right) \\
& =\Exp_{\sigma}\left(\int_{U_d \times B/ U_d}\Log_{\sigma}(\Exp_{\sigma}(\mc{X}_dh_1))\right)\\
&=\prod_{U_d \times B / U_d} \Exp_{\sigma}(\mc{X}_d h_1).\end{align*} 
In particular, from this identity we obtain the first equality \cref{eq.main-abstract-theorem-first-line}. 

To obtain the next two equalities, \cref{eq.main-abstract-theorem-second-line} and \cref{eq.main-abstract-theorem-third-line}, we argue on the $k$th component for each $k$. It follows from \cref{lemma.restriction-map} that we can compute the $k$th ghost component as
\[ \left(\mbb{E}_{U_{d}}\left[\prod_{U_d \times B / U_d} \Exp_{\sigma}(\mc{X}_d h_1)\right]\right)_k = \mbb{E}_{U_{d,\mbf{k}}(\mbf{1})}\left[ \left(\res_k \prod_{U_d \times B / U_d} \Exp_{\sigma}(\mc{X}_d h_1) \right)_1 \right]. \]

For $u \in U_{d,\bfk}(\mbf{1})$, we have 
\begin{align*} \left(\res_k\prod_{U_d \times B / U_d} \Exp_{\sigma}(\mc{X}_d h_1)\right)(u) &= \left(\prod_{U_d \times B / U_d} \Exp_{\sigma}(\mc{X}_d h_1)\right)_{U_{d,\mbf{k}}}(u) \\
&= \left(\prod_{U_{d,\mbf{k}} \times B_{\mbf{k}} / U_{d,\mbf{k}}} \Exp_{\sigma}(\mc{X}_{d, U_{d,\mbf{k}} \times B_{\mbf{k}}} h_1)\right)(u)\\
&= \prod_{B_{\mbf{k}}} \Exp_{\sigma}( \ev_{d,\mbf{k}}(u, -)^*\mc{X}_{A_\mbf{k}}h_1). 
\end{align*} 
where the first equality is by definition, the second equality is by \cref{lemma.mep-ext-of-k}, and the third equality follows from writing $\mc{X}_{d,U_{d,\mbf{k}} \times B_{\mbf{k}}}=\ev_{d,\mbf{k}}^* \mc{X}_{A_{\mbf{k}}}$ and \cref{lemma.mep-fiberwise}.

Passing to the first ghost component, we obtain, by \cref{prop.euler-product-point-counting-formula}, 
\[ \left(\res_k\prod_{U_d \times B / U_d} \Exp_{\sigma}(\mc{X}_d h_1)\right)_1(u)= \prod_{|b| \in |B_{\mbf{k}}|} \Exp_{\sigma}( \mc{X}_{A_{\mbf{k}}}(\ev_{d,\mbf{k}}(u,b))p_{\deg(|b|)})_1,\]
where to obtain the power sum monomial $p_{\deg(|b|)}$ we have used that $h_1(\ul{t}^{n})=p_n$.

Now, note that for each monomial symmetric function $m_\tau \in \Lambda$, only the finitely many $|b| \in |B_{\mbf{k}}|$ of degree less than $|\tau|$ can contribute to the coefficient of $m_\tau$ in the product on the right. Let $B'$ be the set of these, so that the coefficient of $m_\tau$ in this product is the same as its coefficient in 
\[  \prod_{|b| \in |B'|} \Exp_{\sigma}( \mc{X}_{A_{\mbf{k}}}(\ev_{d,\mbf{k}}(u,b))p_{\deg(|b|)})_1.\]

Now, since $(U_d, \ev_d)$ equidistributes on $A/B$,
\begin{multline}\label{eq.mid-af-proof-sections} \lim_{d \in I}  \mbb{E}_{U_{d,\mbf{k}}(\mbf{1})}\left[\prod_{|b| \in |B'|} \Exp_{\sigma}( \mc{X}_{A_{\mbf{k}}}(\ev_{d,\mbf{k}}(u,b))p_{\deg(|b|)})_1\right]\\
=\mathbb{E}_{\Hom_{B_{\mbf{k}}}(B', A_{\mbf{k}})} \left[\prod_{|b| \in  |B'|}\Exp_{\sigma}\left(\mc{X}_{A_{\mbf{k}}}(\varphi(|b|)) p_{\deg(|b|)}\right)_1\right], \end{multline}
where $\varphi$ on the right denotes the varying element of $\Hom_{B_{\mbf{k}}}(B',  A_{\mbf{k}})$ with respect to which we are taking expectation. Since 
\[ \Hom_{B_{\mbf{k}}}(B',  A_{\mbf{k}})=\prod_{|b| \in |B'|} \Hom_{B_{\mbf{k}}}(|b|, A_{\mbf{k}}),\]
the right-hand side of \cref{eq.mid-af-proof-sections} is equal to
\begin{equation}\label{eq.mid-af-proof-one-more} \prod_{|b| \in  |B'|}\mathbb{E}_{\Hom_{B_{\mbf{k}}}(|b|,  A_{\mbf{k}})}\left[\Exp_{\sigma}\left(\mc{X}_{A_{\mbf{k}}}(\varphi(|b|)) p_{\deg(|b|)}\right)_1\right].\end{equation}

Note that we have a natural identification
\[ \Hom_{B_{\mbf{k}}}(|b|,  A_{\mbf{k}}) = \Hom(\mbf{1}, (A_{\mbf{k}})_{b}). \]
Applying the $k=1$ case of \cref{lemma.restriction-map} to each term, we thus find that \cref{eq.mid-af-proof-one-more} is equal to
\begin{equation}
\prod_{|b| \in |B'|} \left(\mbb{E}_{(A_{\mathbf{k}})_b}\left[\Exp_{\sigma}(\mc{X}_{A_{\mathbf{k}}}|_{(A_{\mathbf{k}})_b} p_{\deg(|b|)})\right] \right)_1. \end{equation}

By \cref{lemma.expectation-in-families-values}, this is equal to

\begin{equation}\label{eq.mid-af-proof-again}
\prod_{|b| \in |B'|} \left(\mbb{E}_{A_\mbf{k}/B_{\mbf{k}} }\left[\Exp_{\sigma}(\mc{X}_{A_{\mbf{k}}} p_{\deg(|b|)})\right](|b|) \right)_1. \end{equation}
Note that, again for the coefficient of any fixed $m_\tau$,  \cref{eq.mid-af-proof-again} agrees with the corresponding product over $|B_{ \mbf{k}}|$ whenever $|B'|$ is a sufficiently large subset of finite cardinality. Putting this all together, we obtain the desired equality: 
\begin{align*} \lim_{d \in I} \left(\mbb{E}_{U_d}\left[\prod_{U_d \times B / U_d} \Exp_{\sigma}(\mc{X}_d h_1)\right]\right)_k &= \prod_{|b| \in |B_{\mbf{k}}|} \left(\mbb{E}_{A_{\mbf{k}}/B_{\mbf{k}}}\left[\Exp_{\sigma}(\mc{X}_{A_{\mbf{k}}} p_{\deg(|b|)})\right](|b|) \right)_1 \\
&=\prod_{|b| \in |B_{\mbf{k}}|} \left(\left(\mathbb{E}_{A/B}\left[ \Exp_{\sigma}(\mc{X} p_{\deg(|b|)})\right]\right)_{B_{\mbf{k}}} (|b|) \right)_1 \\
&= \left(\prod_{B} \mathbb{E}_{A/B}\left[ \Exp_{\sigma}(\mc{X}h_1) \right] \right)_k\end{align*}
where the second equality is by \cref{lemma.expectations-change-of-k} and the third is by \cref{prop.euler-product-point-counting-formula}. 
\end{proof}

\begin{remark}\label{remark.more-general-moment-generating-functions-and-joint}
    Let $H \in 1 + \Fil^1\mathbb{Z}[[\ul{t}_{\mathbb{N}}]]$. Then, either by the same argument as in the proof, or as a formal consequence of the associated equivalence of $\Lambda$-distributions (cf. \cite[\S3.2]{Howe.RandomMatrixStatisticsAndZeroesOfLFunctionsViaProbabilityInLambdaRings}), in the setting of \cref{theorem.abstract-independence} one also obtains:    \begin{align*}\lim_{d \in I} \mbb{E}_{U_d}[H^{X_d}] & = \lim_{d \in I} \mbb{E}_{U_d}\left[\prod_{U_d \times B / U_d} H^{X_d} \right]\\
  & = \prod_{U_d \times B / U_d} \lim_{i \in I} \mbb{E}_{U_d \times B/B}[H^{X_d}] \\
  & = \prod_{B} \mathbb{E}_{A/B}\left[H^{\mc{X}} \right].\end{align*}
The $\sigma$-moment generating function corresponds to $H=\Exp_{\sigma}(h_1)=1+h_1+h_2+\ldots$, while the falling moment generating function of \cite[Example 3.2.3-(2)]{Howe.RandomMatrixStatisticsAndZeroesOfLFunctionsViaProbabilityInLambdaRings} corresponds to $H=1+h_1$. Similarly, the proof can be extended to show one can compute joint moment generating functions: for $\mathcal{X}$ and $\mc{Y}$ two families,
\[ \lim_{d \in I} \mathbb{E}_{U_d}\left[ H(\ul{t})^{X_d} H(\ul{s})^{Y_d}\right] = \prod_{B} \mathbb{E}_{A/B}\left[ H(\ul{t})^{\mc{X}} H(\ul{s})^{\mc{Y}} \right]. \]
\end{remark}

\section{Equidistribution for homogeneous polynomials}\label{s.homog}

In this section we first use  \cite[Theorem C]{Bertucci.TaylorConditionsOverFiniteFields} to show equidistribution holds for homogeneous polynomials with certain natural conditions on their Taylor expansions (generalizing those accessible using \cite[Theorem 1.3]{Poonen.BertiniTheoremsOverFiniteFields}) --- see \cref{prop.homog-equidistribution}. In \cref{ss.example-L-functions-of-characters} we explain how the computation of the $\sigma$-moment generating functions for $L$-functions of Dirichlet characters made in \cite[Theorem B]{Howe.RandomMatrixStatisticsAndZeroesOfLFunctionsViaProbabilityInLambdaRings} can be handled by combining \cref{prop.homog-equidistribution} and \cref{theorem.abstract-independence}. In \cref{ss.exotic} we apply \cref{prop.homog-equidistribution} and \cref{theorem.abstract-independence} to compute the asymptotic $\sigma$-moment generating functions for zeta functions of hypersurfaces in a quasi-projective variety satisfying some exotic transversality conditions as in \cite[Example 5.2]{Bertucci.TaylorConditionsOverFiniteFields} --- see \cref{theorem.exotic-tranversality-zeta}.

\subsection{Establishing equidistribution}
\label{ss.homog.poly}

Let $Y_1,\dots,Y_u$ be quasi-projective subschemes of $\bP^n_{\bF_q}$ of dimensions $m_i=\dim Y_i$ with locally closed embeddings $\iota_1,\dots,\iota_u$, respectively. For each $i$, let $\sheaf{Q}_i$ be a locally free quotient of $\iota_i^*\Omega^1_{\bP^n}$ of rank $\ell_i\geq m_i$, and let $\sheaf{K}_i=\ker(\iota_i^*\Omega^1_{\bP^n}\to\sheaf{Q}_i)$. Define
  \[
    \sheaf{E}_i=\bigl(\iota_i^*\PP^1(\sheaf{O}_{\bP^n_{\bF_q}})\bigr)/\sheaf{K}_i
    \quad
    \text{and}
    \quad
    \sheaf{E}_{i,d}=\bigl(\iota_i^*\PP^1(\sheaf{O}_{\bP^n_{\bF_q}}(d))\bigr)/\sheaf{K}_i(d)
  \]
where $\PP^1(\sheaf{F})$ is the sheaf of 1-principal parts of an $\sheaf{O}_{\bP^n}$-module $\sheaf{F}$.

We write $x_0, \ldots, x_n$ for the homogeneous coordinates on $\mathbb{P}^n$. For every point $P\in\bP^n(\fqbar)$, fix a nonnegative integer $M_P$ and a $j_p$ such that $x_{j_P}$ is non-vanishing at $P$; we make these choices so that $M_P$ and $j_P$ are constant on orbits in $\bP^n(\fqbar)$. Given $F\in S_d(\Fqbar)$, write $F_P$ for the image of $F/x_{j_P}^d$ in $\sheaf{O}_{\bP^n_{\Fqbar},P}/\frakm_P^{M_P+1}$.

Let $\phi:\bP^n_{\Fqbar}\to\bP^n_{\bF_q}$ be the natural map. 

\begin{proposition}\label{prop.homog-equidistribution}
With notation as above, set $B=\bP^n_{\bF_q}(\Fqbar)$. For each $P\in B$, let $A_P$ be a subset of $\sheaf{O}_{\bP^n_{\Fqbar},P}/\frakm_P^{M_P+1}$ such that for all but finitely many $P$, $A_P$ contains $F_P$ for all homogeneous $F\in S(\Fqbar)$ such that the image of $F_P$ in $\restr{\phi^*\sheaf{E}_i}{P}$ is nonzero for all $i$.

Set $A=\bigsqcup_{P\in B}A_P$.
Let $U_d$ be the set of $F\in S_d(\Fqbar)$ such that for all $P\in B$, the image $F_P$ of $F$ in $\sheaf{O}_{\bP^n_{\Fqbar},P}/\frakm_P^{M_P+1}$ lies in $A_P$.
Each of $A$, $B$, and $U_d$ are admissible $\bZ$-sets with the geometric Frobenius action.

Let $A\to B$ be the map $F_P\mapsto P$ and $\ev_d:U_d\times B\to A$ the map $(F,P)\mapsto F_P$. Then $(U_d,\ev_d)$ equidistributes on $A/B$ in the sense of \cref{def.equidistributes}.
\end{proposition}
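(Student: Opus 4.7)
The plan is to derive equidistribution as a direct consequence of the generalized Poonen sieve of \cite[Theorem C]{Bertucci.TaylorConditionsOverFiniteFields}. Unwinding \cref{def.equidistributes} and using the natural identification $\Hom_{B_{\mbf{k}}}(B', A_{\mbf{k}}) = \prod_{|P| \in |B'|} A_{|P|}$, the claim reduces to: for each $k \geq 1$, each non-empty admissible $\bZ$-subset $B' \subseteq B_{\mbf{k}}$ of finite cardinality, and each tuple $(g_{|P|})_{|P| \in |B'|} \in \prod_{|P| \in |B'|} A_{|P|}$,
\[ \lim_{d \to \infty} \frac{\#\bigl\{F \in U_{d,\mbf{k}}(\mbf{1}) : F_{|P|} = g_{|P|} \text{ for all } |P| \in |B'| \bigr\}}{\# U_{d,\mbf{k}}(\mbf{1})} = \frac{1}{\prod_{|P| \in |B'|} \# A_{|P|}}. \]

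First I would rewrite both the numerator and denominator as counts of $F \in S_d(\bF_{q^k})$ satisfying sieve conditions. The denominator counts $F$ with $F_{|P|} \in A_{|P|}$ at every closed point $|P|$ of $\bP^n_{\bF_{q^k}}$, while the numerator counts $F$ with $F_{|P|} \in A'_{|P|}$, where $A'_{|P|} := \{g_{|P|}\}$ for $|P| \in |B'|$ and $A'_{|P|} := A_{|P|}$ otherwise. By the hypothesis on the $A_P$, both sieves differ from the ``generic transversality'' sieve (where the local condition at $|P|$ is nonvanishing in each $\restr{\phi^*\sheaf{E}_i}{|P|}$) at only finitely many closed points of $\bP^n_{\bF_{q^k}}$, so both fall within the scope of \cite[Theorem C]{Bertucci.TaylorConditionsOverFiniteFields} applied over $\bF_{q^k}$.

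Bertucci's theorem produces the limiting density of each sieve as an infinite product of purely local factors indexed over the closed points of $\bP^n_{\bF_{q^k}}$, where the factor at $|P|$ is the fraction of residues in $\sheaf{O}_{\bP^n_{\bF_{q^k}},|P|}/\frakm_{|P|}^{M_{|P|}+1}$ lying in $A_{|P|}$ (resp.\ in $A'_{|P|}$). Taking the ratio, the local factors at $|P| \notin |B'|$ cancel exactly, and the remaining contribution from each $|P| \in |B'|$ is $\#\{g_{|P|}\}/\#A_{|P|} = 1/\#A_{|P|}$, yielding the desired limit. In particular, the ratio is well-defined in the limit because $U_{d,\mbf{k}}(\mbf{1})$ is itself nonempty for $d$ large (by the positivity of the infinite product in Bertucci's theorem), which follows from the assumption on the $A_P$.

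The main obstacle is verifying that the hypotheses of \cite[Theorem C]{Bertucci.TaylorConditionsOverFiniteFields} are met after base change to $\bF_{q^k}$: one must track that the base changes of the sheaves $\sheaf{E}_i$ together with the orbit-invariant data $(M_P, j_P)$ produce a well-defined sieve over $\bF_{q^k}$ of the form treated in loc.\ cit., and that superimposing the finite set of arbitrary local conditions at $|P| \in |B'|$ preserves the hypothesis. The latter is precisely the flexibility built into Bertucci's formulation (the sieve allows arbitrary local conditions at finitely many points), so the argument should be a routine bookkeeping exercise once the formalisms are aligned.
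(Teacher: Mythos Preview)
Your proposal is correct and follows essentially the same route as the paper: reduce to singletons in $\Hom_{B_{\mbf{k}}}(B',A_{\mbf{k}})$, express the pushforward measure as a conditional probability (a ratio of two sieve counts over $S_d(\bF_{q^k})$), and apply \cite[Theorem C]{Bertucci.TaylorConditionsOverFiniteFields} to each count so that the local factors away from $|B'|$ cancel. The paper's argument is just a slightly more explicit write-up of exactly what you outline, including the same identification $\Hom_{B_{\mbf{k}}}(|P|,A_{\mbf{k}})\cong A_{|P|}\subset \sheaf{O}_{\bP^n_{\bF_{q^k}},|P|}/\frakm_{|P|}^{M_{|P|}+1}$ and the same ratio-of-infinite-products computation.
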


\begin{proof}
For an admissible $\bZ$-subset $B'\subseteq B_{\bfk}$ of finite cardinality, we have
\begin{equation*}
  \Hom_{B_{\bfk}}(B',A_{\bfk})
  = \prod_{|P| \in |B'|} \Hom_{B_{\bfk}}(|P|, A_{\bf{k}}).
\end{equation*}
If we identify each orbit $|P| \subseteq B' \subseteq \mathbb{P}^n(\fqbar)$ with a closed point of $\mathbb{P}^n_{\mathbb{F}_{q^k}}$, then we obtain a canonical identification of 
$\Hom_{B_{\bfk}}(|P|, A_{\bf{k}})$ with a subset $A_{|P|}$ of  $\sheaf{O}_{\bP^n_{\mathbb{F}_{q^k}},|P|}/\frakm_{|P|}^{M_P+1}$. Viewing $U_{d,\bfk}(\bfone)$ as the set of $F\in S_d(\bF_{q^k})$ such that the image $F_{|P|}$ of $F/x_{j_{|P|}}^d$ in $\sheaf{O}_{\bP^n_{\bF_{q^k}},|P|}/\frakm_{|P|}^{M_{|P|}+1}$ lies in $A_{|P|}$ for all $|P|\in|\bP^n_{\bF_{q^k}}|$, the map $\ev_{d,B'}$ sends $F$ to the tuple $(F_{|P|})_{|P|\in|B'|}$.

To establish \cref{eq.equidist} it suffices to show equality for each singleton $\{(F_{|P|})_{|P|}\}$.
On the right side we have
\begin{equation*}
  \mu_{\Hom_{B_{\bfk}}(B',A_{\bfk})}(\{(F_{|P|})_{|P|}\})
  = \prod_{|P|\in|B'|} \frac{1}{\# A_{|P|}}.
\end{equation*}
The left side can be viewed as a conditional probability:
\begin{align*}
  \MoveEqLeft \bigl((\ev_{d,B'})_*\mu_{U_{d,\bfk}(\bfone)}\bigr)(\{(F_{|P|})_{|P|}\}) \\
  &= \mu_{U_{d,\bfk}(\bfone)}(\ev_{d,B'}^{-1}(\{(F_{|P|})_{|P|}\}) \\
  &= \frac{\#\{G\in S_d(\bF_{q^k})\mid G_{|P|}=F_{|P|} \text{ for all $|P|\in |B'|$ and $G\in U_{d,\bfk}(\bfone)$}\}/\# S_d(\bF_{q^k})}{\# U_{d,\bfk}(\bfone)/\# S_d(\bF_{q^k})}.
\end{align*}
By \cite[Theorem C]{Bertucci.TaylorConditionsOverFiniteFields}, as $d\to\infty$, this converges to
\begin{equation*}
  \frac{
    \biggl(
      \prod_{|P|\in|B'|}
      \frac{1}{\#\bigl(\sheaf{O}_{\bP^n_{\bF_{q^k}}}/\frakm_{|P|}^{M_{|P|}+1}\bigr)}
    \biggr)
    \biggl(
      \prod_{|P|\notin|B'|}
      \frac{\# A_{|P|}}{\#\bigl(\sheaf{O}_{\bP^n_{\bF_{q^k}}}/\frakm_{|P|}^{M_{|P|}+1}\bigr)}
    \biggr)
  }{
    \prod_{|P|\in|\bP^n_{\bF_{q^k}}|}
      \frac{\# A_{|P|}}{\#\bigl(\sheaf{O}_{\bP^n_{\bF_{q^k}}}/\frakm_{|P|}^{M_{|P|}+1}\bigr)}
   }
  = \prod_{|P|\in|B'|}\frac{1}{\# A_{|P|}}.
\end{equation*}
So \cref{eq.equidist} is satisfied, and thus $(U_d,\ev_d)$ equidistributes on $A/B$.
\end{proof}

\begin{remark}
When each of the $Y_i$ is smooth and $\sheaf{Q}_i=\Omega_{Y_i}$, one can invoke  \cite[Theorem 1.3]{Poonen.BertiniTheoremsOverFiniteFields} instead of its generalization \cite[Theorem C]{Bertucci.TaylorConditionsOverFiniteFields}.
\end{remark}

\subsection{Example: \texorpdfstring{$L$}{L}-functions of characters}\label{ss.example-L-functions-of-characters}

We now explain how this framework leads to a more transparent computation of the asymptotic $\sigma$-moment generating functions for $L$-functions of Dirichlet characters as in \cite[Theorem B]{Howe.RandomMatrixStatisticsAndZeroesOfLFunctionsViaProbabilityInLambdaRings}.

Let $\kappa$ be a finite field of cardinality $q$ and fix a prime $\ell$ coprime to $q$, and a non-trivial order $\ell$ character $\chi$ of $\mu_\ell(\kappa)$. Let $U_d$ be the space of $\ell$-power free degree $d$  polynomials in the variable $x$. As in \cite[\S7.1]{Howe.RandomMatrixStatisticsAndZeroesOfLFunctionsViaProbabilityInLambdaRings}, for $f \in U_d$, we obtain a Kummer character $\chi_f$ of $\Gal(\overline{\kappa(f)(x)}/\kappa(f)(x))$ sending $\sigma$ to $\chi(\sigma(f^{1/\ell})/f^{1/\ell})$. One can compute the $L$-function as an Euler product 
\begin{equation}\label{eq.char-euler-product} \mathcal{L}(\chi_f, t) = \prod_{|z| \in |\mathbb{A}^1_{\kappa(f)}|} \frac{1}{1-\chi(f(z)^{(\#\kappa(f,z)-1)/\ell})t^{\deg(z)}} \end{equation}
where we set $\chi(0)=0$. We define a random variable $X_d$ on $U_d$ sending $f$ to $\mathcal{L}(\chi_f, t)$. 

We explain how this fits into the context of \cref{prop.homog-equidistribution}: for each $P \in \mathbb{A}^1(\kappabar)\subseteq \mathbb{P}^1(\kappabar)$, we set $M_P=\ell-1$, $j_P=1$, and $A_P= \sheaf{O}_{\mathbb{P}^n_{\kappabar}, P}/\mathfrak{m}_P^\ell - \{0\}.$ For $\infty=[1:0]$, we set $M_\infty=0$, $j_\infty=0$, and $A_\infty=\{1\} \subseteq \sheaf{O}_{\mathbb{P}^n_{\kappabar},\infty}/\mathfrak{m}_\infty = \kappabar$. Then dividing by $x_1^d$ identifies the $U_d$ appearing in \cref{prop.homog-equidistribution} with the set of $\ell$-power free monic polynomials in $x=x_0/x_1$ that we have called $U_d$ here, and under this identification the map $\ev_d$ is the natural map.

Now, for $P \in \mathbb{A}^1(\kappabar)$, we define $\mathcal{X}_P$ to send a germ $g \in A_P$ to 
\[ \frac{1}{1-\chi(g(P)^{(\#\kappa(g)-1)/\ell})t} = [\chi(g(P)^{(\#\kappa(g)-1)/\ell})]\]
where $\kappa(g)$ is the extension of $\kappa(P)$ in $\kappabar$  generated by the coefficients of $g$. We define $\mathcal{X}_\infty$ to be the trivial random variable. 
It is a straightforward computation from \cref{eq.char-euler-product} that 
\[ X_d = \int_{U_d \times \mathbb{A}^1(\kappabar)/U_d} \ev^*\mathcal{X},\]
thus \cref{prop.homog-equidistribution} and \cref{theorem.abstract-independence} imply
\[ \lim_{d \rightarrow \infty} \mathbb{E}[\Exp_{\sigma}(X_d h_1)]=\prod_{\mathbb{A}^1(\kappabar)} \mathbb{E}_{A/\mathbb{A}^1(\kappabar)}[\Exp_\sigma(\mathcal{X}_P h_1)], \]
where the term at $\infty$ has gone away because it is identically $1$. 

Now, we compute $\mathbb{E}[\Exp_{\sigma}(\mathcal{X}_P h_1)]$ for $P \in \mathbb{A}^1(\kappabar)$: let $\chi_P$ denote the $\mathbb{C}$-valued function on $A_P$ sending a germ $g$ to $\chi(g(P)^{(\#\kappa(g)-1)/\ell})$ so that $\mathcal{X}_P=[\chi_P]$. Now, since $h_j \circ ([a]h_1)= [a^j] h_j$ for any $a \in \mathbb{C}$ (see \cite[Lemma 2.2.4]{Howe.RandomMatrixStatisticsAndZeroesOfLFunctionsViaProbabilityInLambdaRings}),
\[ \Exp_{\sigma}(\mathcal{X}_P h_1)= \sum_{j \geq 0} [\chi_P^{j}] h_{j}. \]
We can compute the $k$th component of $\mathbb{E}[\chi_P^n]$ using \cref{lemma.restriction-map}. The restriction of $[\chi_P]$ to $(A_P)_{\bfk}(\bf{1})$, i.e. to the set of germs with coefficients in the degree $k$ extension of $\kappa(P)$ in $\kappabar$,  
sends $g$ to $[\chi(g(P)^{(q_P^{k}-1)/\ell})]$, where $q_P=\#\kappa(P)$. Taking the first component, we find, $[\chi_P^n]_1=[\chi_P]^n_1$ sends $g$ to 
\[ \chi(g(P)^{n(q_P^{k}-1)/\ell}). \]
Thus the expectation is zero unless $\ell | n$ (since if $\ell \nmid n$ then every $\ell$th root of unity value is equally likely and the only other value it takes is zero), and when $\ell|n$ it is 
\[ \frac{(q_P^{k})^{\ell-1}-1}{(q_P^k)^\ell -1} \]
since the function is identically $1$ when $g(P)\neq 0$ and $0$ when $g(P)=0$. Thus,
\[ \mathbb{E}[\Exp_{\sigma}(\mathcal{X}_P h_1)]= 1 + \frac{([q_P])^{\ell-1}-1}{([q_P])^\ell -1} \sum_{j \geq 1}  h_{\ell j}.  \]
It follows that  
\[ \mathbb{E}_{A/\mathbb{A}^1(\kappabar)}[\Exp_{\sigma}(\mathcal{X} h_1)] \]
is the pullback from a point of  
\[  1 + \frac{[q]^{\ell-1}-1}{[q]^\ell -1} \sum_{j \geq 1}  h_{\ell j}, \]
thus
\begin{align*} \prod_{\mathbb{A}^1(\kappabar)} \mathbb{E}_{A/\mathbb{A}^1(\kappabar)}[\Exp_\sigma(\mathcal{X} h_1)] & = \Bigl(1 + \frac{[q]^{\ell-1}-1}{[q]^\ell -1} \sum_{j \geq 1}  h_{\ell j}\Bigr)^{\mathbb{A}^1(\kappabar)}\\
&=\Bigl(1 + \frac{[q]^{\ell-1}-1}{[q]^\ell -1} \sum_{j \geq 1}  h_{\ell j}\Bigr)^{[q]}. \end{align*}
In particular, if we
\begin{enumerate}
    \item  replace $\mathcal{L}$ with its reciprocal (as a power series in $1+t\mathbb{C}[[t]]$, which is the negative for the additive structure of the Witt vectors), and
    \item scale the variables by $[q^{-1/2}]$,
\end{enumerate}
then we recover the $\sigma$-moment generating function described in \cite[Theorem B]{Howe.RandomMatrixStatisticsAndZeroesOfLFunctionsViaProbabilityInLambdaRings}. Indeed, by \cite[Theorem 2.2.1]{Howe.TheNegativeSigmaMomentGeneratingFunction}, replacing $\mathcal{L}$ with its reciprocal will replace the $h_{\ell j}$'s with $(-1)^{\ell j}e_{\ell j}$'s in the $\sigma$-moment generating function, and scaling the random variable by $[z]$ changes the $\sigma$-moment generating function by scaling the variables by $[z]$.

When $\ell \geq 3$, the more refined joint moment generating function between $X_d$ and the random variable $\overline{X}_d$ obtained from the complex conjugate character to $\chi$ is of the most interest; this can be recovered similarly by using the natural extension of \cref{theorem.abstract-independence} to joint moment generating functions described in \cref{remark.more-general-moment-generating-functions-and-joint}.

\subsection{Application: Zeta functions of hypersurface sections with exotic transversality conditions}\label{ss.exotic}
We consider the setup of \cite[Example 5.2]{Bertucci.TaylorConditionsOverFiniteFields}. Let $\kappa$ be a finite field of cardinality $q$, let $Y \hookrightarrow \mathbb{P}^m_{\kappa}$ be a quasi-projective subscheme, let $W \hookrightarrow Y \times_{\kappa} \mathbb{P}^{m}_{\kappa}$ be a closed subscheme such that the projection from $W$ to $Y$ is smooth of relative dimension $\ell \geq \dim Y$ and such that the graph of $Y \hookrightarrow \mathbb{P}^m_{\kappa}$, $Y \rightarrow Y \times_{\kappa} \mathbb{P}^m_{\kappa}$, factors through $W$. In other words, $W \rightarrow Y$ is a smooth family of subvarieties of $\mathbb{P}^m_{\kappa}$ of at least the same dimension as $Y$ such that, at each point $P \in Y(\kappabar)$, the fiber $W_P$ contains $P$. 

We let $U_d$ be the set of degree $d$ homogeneous polynomials $F$ in $m+1$ variables such that $V(F)$ is transverse to $W_P$ at all points $P \in Y(\kappabar) \cap V(F)(\kappabar)$. We write $Z_d$ for the random variable on $U_d$ sending $F$ to the zeta function
\[ Z_{V(F) \cap Y_{\kappa(F)}/\kappa(F)}(t). \]

\begin{example}\label{example.families-conditions}
    If $Y$ is smooth and $W=Y \times \mathbb{P}^n_{\kappa}$, then $Z_d$ is the random variable sending a smooth hypersurface section to its zeta function. On the other hand, \cite[Example 5.2]{Bertucci.TaylorConditionsOverFiniteFields} shows there are other geometrically interesting examples. 
\end{example}

In light of \cref{example.families-conditions} and \cite[Theorem 2.2.1]{Howe.TheNegativeSigmaMomentGeneratingFunction}, the following is a generalization of \cite[Theorem 8.3.1]{Howe.RandomMatrixStatisticsAndZeroesOfLFunctionsViaProbabilityInLambdaRings}.

\begin{theorem}\label{theorem.exotic-tranversality-zeta}
With notation as above, as $d \rightarrow \infty$, the $\Lambda$-distribution of $Z_{d}$ converges to a binomial $\Lambda$-distribution as in \cite[Definition 3.3.2]{Howe.RandomMatrixStatisticsAndZeroesOfLFunctionsViaProbabilityInLambdaRings} with parameters
\[ p=\frac{[q^{-1}]-[q^{-(\ell+1)}]}{1- [q]^{-(\ell+1)}}=\frac{[q^{\ell}]-1}{[q^{\ell+1}]-1} \textrm { and } N=[Y(\overline{\kappa})]. \]
Equivalently, 
\[ \lim_{d \rightarrow \infty} \mathbb{E}[\Exp_{\sigma}(Z_{d}h_1)] = (1+p(h_1 + h_2 + \ldots))^N. \]    
\end{theorem}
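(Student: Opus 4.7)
The plan is to mimic the argument of \cref{ss.example-L-functions-of-characters}: first use \cref{prop.homog-equidistribution} to establish equidistribution, then express $Z_d$ as an integral of an explicit function $\mathcal{X}$, apply \cref{theorem.abstract-independence}, and finally compute the resulting motivic Euler product via \cref{example.constant-product-is-power}.

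For the equidistribution setup I would take $B = \mathbb{P}^m_\kappa(\kappabar)$. For $P \in Y(\kappabar)$, set $M_P = 1$ and let $A_P \subseteq \mathcal{O}_{\mathbb{P}^m_{\kappabar},P}/\mathfrak{m}_P^2$ consist of germs $g$ with either $g(P) \neq 0$, or $g(P) = 0$ and $dg$ nonzero on $T_P W_P$ (where $W_P \subseteq \mathbb{P}^m$ denotes the fiber of $W \to Y$ above $P$); for $P \notin Y(\kappabar)$, set $M_P = 0$ and $A_P = \mathcal{O}_{\mathbb{P}^m_{\kappabar},P}/\mathfrak{m}_P$. In \cref{prop.homog-equidistribution} I would take $u = 1$, $Y_1 = Y$, and $\mathcal{Q}_1$ the pullback of $\Omega^1_{W/Y}$ along the section $Y \hookrightarrow W$: this is a rank-$\ell$ locally free quotient of $\iota^*\Omega^1_{\mathbb{P}^m}$ whose fiber at $P$ is $T_P^* W_P$. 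The hypothesis then reduces to checking that $F_P$ has nonzero image in $\mathcal{E}_1|_P$ exactly when $F_P \in A_P$, which is immediate from the definitions; the $U_d$ produced by the proposition coincides with the one in the theorem, so $(U_d, \ev_d)$ equidistributes on $A/B$.

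Next I define $\mathcal{X} \in C(A, W(\mathbb{C}))$ by $\mathcal{X}(g) = [1] = 1/(1-t)$ when $P \in Y(\kappabar)$ and $g(P) = 0$, and $\mathcal{X}(g) = 0$ otherwise. Since the fiber of $U_d \times B \to U_d$ over $F$ carries the Frobenius action rescaled by $\deg F$, its orbits are precisely the closed points of $\mathbb{P}^m_{\kappa(F)}$, and nonzero contributions to the integral come only from orbits in $|V(F) \cap Y_{\kappa(F)}|$, each contributing $[1](t^{\deg|b|}) = 1/(1 - t^{\deg|b|})$; this identifies $Z_d = \int_{U_d \times B/U_d} \ev_d^*\mathcal{X}$. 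Applying \cref{theorem.abstract-independence} would then yield
\[ \lim_{d \to \infty} \mathbb{E}[\Exp_\sigma(Z_d h_1)] = \prod_B \mathbb{E}_{A/B}[\Exp_\sigma(\mathcal{X} h_1)]. \]

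To evaluate the product, the factor at $P \notin Y$ is trivially $1$; at $P \in Y$, since $\Exp_\sigma([1] h_1) = 1 + h_1 + h_2 + \ldots$, one has $\mathbb{E}_{A_P}[\Exp_\sigma(\mathcal{X} h_1)] = 1 + \mathbb{E}_{A_P}[\chi](h_1 + h_2 + \ldots)$, where $\chi$ is the indicator of vanishing at $P$. A direct count over the $\kappa(P)$-points of $A_P$ (using \cref{lemma.restriction-map} for the $k$th ghost component and the smoothness of $W_P$ of dimension $\ell$) gives
\[ (\mathbb{E}_{A_P}[\chi])_k = \frac{q_P^{k\ell} - 1}{q_P^{k(\ell+1)} - 1}, \qquad q_P = \#\kappa(P), \]
which matches the ghost components at $P$ of the pullback from $\mathbf{1}$ to $B$ of $p = ([q^\ell]-1)/([q^{\ell+1}]-1) \in W(\mathbb{C})$. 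Since the nontrivial factor in the Euler product is pulled back from a point, \cref{example.constant-product-is-power} gives
\[ \prod_B \mathbb{E}_{A/B}[\Exp_\sigma(\mathcal{X} h_1)] = \prod_{Y(\kappabar)} (1 + p(h_1 + h_2 + \ldots)) = (1 + p(h_1 + h_2 + \ldots))^{[Y(\kappabar)]}, \]
as desired. The main obstacle I anticipate is the sheaf-theoretic identification of $\mathcal{Q}_1$ and $\mathcal{E}_1$ needed so that $F_P \in A_P$ matches the transversality defining $U_d$ (closely following \cite[Example 5.2]{Bertucci.TaylorConditionsOverFiniteFields}); the remaining steps are bookkeeping within the formalism already developed.
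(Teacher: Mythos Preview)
Your proposal is correct and follows essentially the same route as the paper: the same choice of $\mathcal{Q}_1=\gamma^*\Omega_{W/Y}$ to place the problem in the setting of \cref{prop.homog-equidistribution}, the same indicator random variable $\mathcal{X}$, the same application of \cref{theorem.abstract-independence}, and the same reduction to a pre-$\lambda$ power via \cref{example.constant-product-is-power}. The only cosmetic differences are that the paper takes $M_P=1$ for all $P$ (rather than $M_P=0$ off $Y$) and phrases the Bernoulli probability as the Witt vector $\frac{[q^{-k}]-[q^{-(\ell+1)k}]}{1-[q^{-(\ell+1)k}]}$ for a degree-$k$ point, which agrees with your ghost-component formula $\frac{q_P^{k\ell}-1}{q_P^{k(\ell+1)}-1}$.
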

\begin{proof}
We are in the setup of \cref{prop.homog-equidistribution} with $M_P=1$ for all $P$, with $u=1$ and $\sheaf{Q}=\sheaf{Q}_1 = \gamma^*\Omega_{W/Y}$, where $\gamma$ is the map $Y \rightarrow W$ induced by the graph of the immersion $Y \rightarrow \mathbb{P}^m_{\kappa}$. The $j_P$ can be chosen arbitrarily; it will not effect the conditions below.

For $P \in Y(\kappabar)$, we consider the random variable $\mathcal{X}_P$ on $A_P$ sending a germ $g$ to $\frac{1}{1-t}$ if $g(P)=0$ and $0$ otherwise, and for $P \in \mathbb{P}^m(\kappabar)\backslash Y(\kappabar)$, we set $\mathcal{X}_P$ to be the trivial random variable.    

Then 
\[ Z_d= \int_{U_d \times \mathbb{P}^m(\kappabar)/ U_d} \ev_d^{*} \mathcal{X}. \]
By \cref{prop.homog-equidistribution}, $(U_{d}, \ev_{d})$ equidistributes on $A/\mathbb{P}^m(\kappabar)$, thus, by \cref{theorem.abstract-independence}, the asymptotic $\sigma$-moment generating function is
\[ \prod_{\mathbb{P}^m(\kappabar)} \mathbb{E}_{A/\mathbb{P}^m(\kappabar)}[\Exp_{\sigma}(\mathcal{X}h_1)] = \prod_{Y(\kappabar)} \mathbb{E}_{A_{Y(\kappabar)}/Y(\kappabar)}[\Exp_{\sigma}(\mathcal{X}h_1)] \]
where the factor corresponding to $\mathbb{P}^m(\kappabar)-Y(\kappabar)$ disappeared because the moment generating function of the trivial random variable is identically $1$. 

For $P \in Y(\kappabar)$ of degree $k$, $\mathcal{X}_P$ is a Bernoulli random variable equal to $\frac{1}{1-t}$ (the unit in $W(\mathbb{C})$) with probability 
\[ \frac{[q^{-k}] - [q^{-(\ell+1)k}]}{1- [q^{-(\ell+1)k}]}. \]
This can be checked on each ghost component using \cref{lemma.restriction-map}; the numerator is the probability of a section vanishing at $P$ and being transverse to $W_P$ at $P$, whereas the denominator is the probability of a section either not vanishing or vanishing and being transverse. 

It follows that $\Exp_{\sigma}(\mathcal{X}h_1)|_{Y(\kappabar)}$ is the pullback of
$(1+ p(h_1 + h_2 + \ldots))$ from $\mathbf{1}$ to $Y(\kappabar)$. Thus, by \cref{example.constant-product-is-power},
\[ \prod_{Y(\kappabar)} \mathbb{E}_{A_{Y(\kappabar)}/Y(\kappabar)}[\Exp_{\sigma}(\mathcal{X}h_1)] = \prod_{Y(\kappabar)} (1+ p(h_1 + h_2 + \ldots)) = (1+ p(h_1 + h_2 + \ldots))^{[Y(\kappabar)]}.   \] 
\end{proof}

\begin{remark}
    For $F \in U_d$ as above, $V(F)$ may not intersect $Y$ transversely, so there is not an obvious $L$-function to extract in order to give a result analogous to \cref{maintheorem.complete-intersections}. However, \cref{prop.homog-equidistribution} is robust enough to allow one to additionally impose the condition that $V(F)$ intersect $Y$ transversely, giving an interesting $L$-function. We leave the computation of the $L$-function $\Lambda$-distribution in this case to the interested reader, but note that the motivic Euler product for the zeta function random variable in this setting will not typically be expressible as a single pre-$\lambda$ power:  the pointwise moment generating function at $P$ will depend on the dimension of the intersection of the tangent space of $Y$ at $P$ and the tangent space of $W_P$ at $P$.   
\end{remark}

\section{Equidistribution for tuples of homogeneous polynomials}\label{s.tuples-homog}
In this section we use the results of \cite{BucurKedlaya.TheProbabilityThatACompleteIntersectionIsSmooth} to show equidistribution holds for tuples of homogeneous polynomials intersecting a fixed smooth quasi-projective variety transversely (\cref{prop.tuples-equidistribution}), then combine this with \cref{theorem.abstract-independence} to prove \cref{maintheorem.complete-intersections}. As in the proof of \cite[Theorem C]{Howe.RandomMatrixStatisticsAndZeroesOfLFunctionsViaProbabilityInLambdaRings}, we first study the geometric random variable that sends a complete intersection to its zeta function (\cref{theorem.ci-geometric-rv}; cf. \cite[Theorem 8.3.1]{Howe.RandomMatrixStatisticsAndZeroesOfLFunctionsViaProbabilityInLambdaRings}), then argue with basic properties of independence to obtain the computation of the asymptotic moment generating function in \cref{maintheorem.complete-intersections}. 

\subsection{Establishing equidistribution}
\label{ss.Bucur-Kedlaya}

\subsubsection{}
For $\ul{d}=(d_1,\dots,d_r)$ a tuple of positive integers, write $S_{\ul{d}}$ for the product $S_{d_1}\times\dots\times S_{d_r}$ and identify it with the global sections of $\sheaf{O}_{\bP^n_{\bF_q}}(\ul{d})=\bigoplus_{i=1}^r\sheaf{O}_{\bP^n_{\bF_q}}(d_i)$.

\subsubsection{}\label{sss.tuples-ordering}Fix non-negative integers $m$ and $r$. Set $I_{m,r}=\bN^r$ with an ordering described as follows: for tuples $\ul{a}=(a_1,\dots,a_r)$ and $\ul{b}=(b_1,\dots,b_r)$, $\ul{a}\leq\ul{b}$ if and only if $a_i\leq b_i$ for all $i$ and $\max(b_i)^{-m}q^{\min(b_i)/(m+1)}\leq \max(a_i)^{-m}q^{\min(a_i)/(m+1)}$.

\subsubsection{}For every point $P\in\bP^n(\fqbar)$, fix a non-vanishing coordinate $x_{j_P}$, $0 \leq j_p \leq n$; we make this choice so that $j_P$ is constant on orbits. 
Given $\ul{F}\in S_{\ul{d}}(\fqbar)$, write $\ul{F}_P$ for the image of 
$(F_1/x_{j_P}^{d_1},\dots,F_r/x_{j_P}^{d_r})$ in $(\sheaf{O}_{\bP^n_{\fqbar},P}/\frakm_P^2)^{\oplus r}$. 

\subsubsection{} Define
  \[
    L(a,b,c) = \prod_{j=0}^{c-1} (1-a^{-(b-j)})
  \]
which, when $a=q$, is the probability that $c$ randomly chosen vectors in $\bF_q^b$ are linearly independent.

\begin{proposition}\label{prop.tuples-equidistribution}
Let $Y$ be a smooth, quasi-projective subscheme of $\bP^n_{\bF_q}$ of dimension $m$.

Set $B=\bP^n_{\bF_q}(\Fqbar)$ and for each $P\in B$, let $A_P$ be the subset of $(\sheaf{O}_{\bP^n_{\Fqbar},P}/\frakm_P^2)^{\oplus r}$ such that
  \[
    A_P = 
    \begin{cases}
      \biggl\{\ul{g}_P\in (\sheaf{O}_{\bP^n_{\Fqbar},P}/\frakm_P^2)^{\oplus r}\biggm\vert \begin{tabular}{@{}l@{}}
      not all $g_i$ lie in $\frakm_P$ or all $g_i$ lie in $\frakm_P$ \\
      and are linearly independent in $\frakm_P/\frakm_P^2$
      \end{tabular}
      \biggr\} & P\in Y \\
      (\sheaf{O}_{\bP^n_{\Fqbar},P}/\frakm_P^2)^{\oplus r} & P\notin Y
    \end{cases}
  \]
Set $A=\bigsqcup_{P\in B}A_P$.
Let $U_{\ul{d}}$ be the set of $\ul{F}\in S_{\ul{d}}(\Fqbar)$ such that for all $P\in B$, $\ul{F}_P$ lies  in $A_P$. Let $A\to B$ be the map $\ul{F}_P\mapsto P$ and $\ev_{\ul{d}}:U_{\ul{d}}\times B\to A$ the map $(\ul{F},P)\mapsto \ul{F}_P$. Then $(U_{\ul{d}},\ev_{\ul{d}})_{\ul{d} \in I_{m,r}}$ equidistributes on $A/B$ in the sense of  \cref{def.equidistributes}. 
\end{proposition}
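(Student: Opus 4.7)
The plan is to adapt the strategy from the proof of \cref{prop.homog-equidistribution}, using the Bucur-Kedlaya sieve \cite{BucurKedlaya.TheProbabilityThatACompleteIntersectionIsSmooth} in place of \cite[Theorem C]{Bertucci.TaylorConditionsOverFiniteFields}. The asymptotic regime $I_{m,r}$ defined in \cref{sss.tuples-ordering} is tailored precisely to the convergence condition under which the Bucur-Kedlaya sieve expresses the density of tuples cutting out a smooth complete intersection in $Y$ as a convergent Euler product; I would first note that $I_{m,r}$ is directed, since any two tuples can be dominated by a component-wise larger tuple with sufficiently balanced entries.

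Fix $k \geq 1$ and a non-empty admissible $\bZ$-subset $B' \subseteq B_{\mbf{k}}$ of finite cardinality. As in the proof of \cref{prop.homog-equidistribution}, it suffices to verify \cref{eq.equidist} on each singleton $\{(\ul{F}_{|P|})_{|P| \in |B'|}\} \subseteq \Hom_{B_{\mbf{k}}}(B',A_{\mbf{k}}) = \prod_{|P|\in|B'|} A_{|P|}$, where each orbit $|P|$ is identified with a closed point of $\bP^n_{\bF_{q^k}}$ and $A_{|P|}$ with the natural subset of $(\sheaf{O}_{\bP^n_{\bF_{q^k}},|P|}/\frakm_{|P|}^2)^{\oplus r}$. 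The right-hand side of \cref{eq.equidist} on this singleton is $\prod_{|P|\in|B'|} 1/\# A_{|P|}$, while the left-hand side is the conditional probability
\[
\frac{\#\{\ul{G}\in S_{\ul{d}}(\bF_{q^k}) : \ul{G}_{|P|}=\ul{F}_{|P|} \textrm{ for all } |P|\in|B'| \textrm{ and } \ul{G}\in U_{\ul{d},\mbf{k}}(\mbf{1})\}/\# S_{\ul{d}}(\bF_{q^k})}{\# U_{\ul{d},\mbf{k}}(\mbf{1})/\# S_{\ul{d}}(\bF_{q^k})}.
\]

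By the Bucur-Kedlaya sieve, both numerator and denominator have limits as $\ul{d}\to\infty$ along $I_{m,r}$, computable as Euler products over $|P|\in |\bP^n_{\bF_{q^k}}|$ of local factors of the form $\# A_{|P|}/\#(\sheaf{O}_{\bP^n_{\bF_{q^k}},|P|}/\frakm_{|P|}^2)^{\oplus r}$, with the convention that this local factor is $1$ for $|P|\notin|Y_{\bF_{q^k}}|$. For the numerator the local factors at the finitely many $|P|\in|B'|$ are replaced by $1/\#(\sheaf{O}_{\bP^n_{\bF_{q^k}},|P|}/\frakm_{|P|}^2)^{\oplus r}$, since the Taylor data there is pinned to the prescribed (and already admissible) value $\ul{F}_{|P|}$. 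All remaining local factors cancel in the ratio, leaving $\prod_{|P|\in|B'|}1/\# A_{|P|}$, which matches the right-hand side.

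The main obstacle is justifying that \cite{BucurKedlaya.TheProbabilityThatACompleteIntersectionIsSmooth} can be run in this ``prescribed Taylor data'' variant. Their sieve proceeds by splitting $|\bP^n_{\bF_{q^k}}|$ into low-, medium-, and high-degree contributions, with the low-degree term giving the explicit Euler product and the medium/high-degree terms controlled by Lang-Weil and combinatorial bounds that depend only on the ambient geometry and the magnitudes of $\ul{d}$. Pinning the Taylor data at the finitely many points of $|B'|$ alters only finitely many low-degree local factors, so it does not worsen the error estimates governing convergence in $I_{m,r}$. This is exactly parallel to the role played by \cite[Theorem C]{Bertucci.TaylorConditionsOverFiniteFields} in the proof of \cref{prop.homog-equidistribution}, and the same mechanism should yield the desired limit here.
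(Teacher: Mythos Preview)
Your proposal is correct and follows essentially the same route as the paper: reduce to singletons, express the pushforward measure as a conditional probability, and invoke \cite[Theorem 1.2]{BucurKedlaya.TheProbabilityThatACompleteIntersectionIsSmooth} so that the Euler-product local factors at $|P|\notin|B'|$ cancel between numerator and denominator. The paper proceeds slightly more explicitly by writing out $\#A_{|P|}$ in terms of $L(q^{k\deg(P)},m,r)$, but the underlying argument is identical, and your remark about pinning Taylor data at finitely many points affecting only the low-degree part of the sieve is exactly the justification the paper implicitly relies on.
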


\begin{proof}
For an admissible $\bZ$-subset $B'\subseteq B_{\bfk}$ of finite cardinality, we have
\begin{equation*}
  \Hom_{B_{\bfk}}(B',A_{\bfk})
  = \prod_{|P| \in |B'|} \Hom_{B_{\bfk}}(|P|, A_{\bf{k}}).
\end{equation*}
If we identify the orbit $|P| \subseteq B' \subseteq \mathbb{P}^n(\fqbar)$ with a closed point of $\mathbb{P}^n_{\mathbb{F}_{q^k}}$, then we obtain a canonical identification of 
$\Hom_{B_{\bfk}}(|P|, A_{\bf{k}})$ with a subset $A_{|P|}$ of  $(\sheaf{O}_{\bP^n_{\mathbb{F}_{q^k}},|P|}/\frakm_{|P|}^2)^{\oplus r}$.
Viewing $U_{\ul{d},\bfk}(\bfone)$ as the set of $\ul{F}\in S_{\ul{d}}(\bF_{q^k})$ such that the image of $\ul{F}$ in $(\sheaf{O}_{\bP^n_{\bF_{q^k}},|P|}/\frakm_{|P|}^2)^{\oplus r}$ lies in $A_{|P|}$ for all $|P|\in|\bP^n_{\bF_{q^k}}|$, the map $\ev_{\ul{d},B'}$ sends $\ul{F}$ to the tuple $(\ul{F}_{|P|})_{|P|\in|B'|}$.

To establish \cref{eq.equidist} it suffices to show equality for each singleton $\{(\ul{F}_{|P|})_{|P| \in |B'|}\}$.
On the right side we have
\begin{align*}
  \MoveEqLeft
  \mu_{\Hom_{B_{\bfk}}(B',A_{\bfk})}(\{(\ul{F}_{|P|})_{|P|}\}) \\
  &= \prod_{|P|\in|B'|} \frac{1}{\# A_{|P|}} \\
  &= \Bigl(\prod_{|P|\in|B'|-|Y_{\bF_{q^k}}|} q^{-kr\deg(P)(n+1)}\Bigr) \\
  &\hspace{1.5cm}  \Bigl(\prod_{|P|\in|B'|\cap|Y_{\bF_{q^k}}|} \frac{q^{-kr\deg(P)(n+1)}}{1-q^{-kr\deg(P)}+q^{-kr\deg(P)}L(q^{k\deg(P)},m,r)}\Bigr).
\end{align*}
The left side of \cref{eq.equidist} can be viewed as a conditional probability:
\begin{align*}
  \MoveEqLeft \bigl((\ev_{\ul{d},B'})_*\mu_{U_{\ul{d},\bfk}(\bfone)}\bigr)(\{(\ul{F}_{|P|})_{|P|}\}) \\
  &= \mu_{U_{\ul{d},\bfk}(\bfone)}(\ev_{\ul{d},B'}^{-1}(\{(\ul{F}_{|P|})_{|P|}\}) \\
  &= \frac{\#\{\ul{G}\in S_{\ul{d}}(\bF_{q^k})\mid \ul{G}_{|P|}=\ul{F}_{|P|} \text{ for all $|P|\in |B'|$ and $\ul{G}\in U_{\ul{d},\bfk}(\bfone)$}\}/\# S_{\ul{d}}(\bF_{q^k})}{\# U_{\ul{d},\bfk}(\bfone)/\# S_{\ul{d}}(\bF_{q^k})}.
\end{align*}

By \cite[Theorem 1.2]{BucurKedlaya.TheProbabilityThatACompleteIntersectionIsSmooth}, as $\ul{d}\to\infty$ such that $\mr{max}(\ul{d})^mq^{-\min(\ul{d})/(m+1)}\to 0$ (guaranteed by our choice of $I_{m,r}$), this converges to
\begin{multline*}
  \frac
    {\Bigl(\prod_{|P|\in|B'|}q^{-kr\deg(P)(n+1)}\Bigr)\Bigl(\prod_{|P|\in|Y_{\bF_{q^k}}|-|B'|}\bigl(1-q^{-kr\deg(P)}+q^{-kr\deg(P)}L(q^{k\deg(P)},m,r)\bigr)\Bigr)}
    {\prod_{|P|\in|Y_{\bF_{q^k}}|}\bigl(1-q^{-kr\deg(P)}+q^{-kr\deg(P)}L(q^{k\deg(P)},m,r)\bigr)} \\
  \shoveleft
    {= \Bigl(\prod_{|P|\in|B'|-|Y_{\bF_{q^k}}|} q^{-kr\deg(P)(n+1)}\Bigr)} \\
  \Bigl(\prod_{|P|\in|B'|\cap|Y_{\bF_{q^k}}|} \frac{q^{-kr\deg(P)(n+1)}}{1-q^{-kr\deg(P)}+q^{-kr\deg(P)}L(q^{k\deg(P)},m,r)}\Bigr).
\end{multline*}
So \cref{eq.equidist} is satisfied, and thus $(U_{\ul{d}},\ev_{\ul{d}})$ equidistributes on $A/B$.
\end{proof}

\subsection{Application: Zeta functions and \texorpdfstring{$L$}{L}-functions of complete intersections}
Let $\kappa$ be a finite field of order $q$ and fix an algebraic closure $\kappabar$. Let $Y \subseteq \mathbb{P}^n_{\kappa}$ be a smooth quasi-projective subscheme of dimension $m+r$. With notation as in \cref{prop.tuples-equidistribution}, for $F \in U_{\ul{d}}$ we write $C_{\ul{F}}$ for the scheme-theoretic intersection $Y \cap V(F_1) \cap \ldots \cap V(F_r)$, a smooth quasi-projective subscheme of $\mathbb{P}^n_{\kappa(\ul{F})}$, where $\kappa(\ul{F})$ is the subfield of $\kappabar$ generated by the coefficients of $F$. 

Let $Z_{\ul{d}}$ be the random variable on $U_{\ul{d}}$ sending $\ul{F}$ to
\[ Z_{C_{\ul{F}}}(t)=\prod_{y \in |C_{\ul{F}}|} \frac{1}{1-t^{\deg{y}}}. \]
The following (combined with \cite[Theorem 2.2.1]{Howe.TheNegativeSigmaMomentGeneratingFunction}) generalizes \cite[Theorem 8.3.1]{Howe.RandomMatrixStatisticsAndZeroesOfLFunctionsViaProbabilityInLambdaRings}, which is the case of $r=1$. 

\begin{theorem}\label{theorem.ci-geometric-rv}
With notation as above, as $\ul{d}$ goes to $\infty$ in $I_{m+r,r}$ (see \cref{sss.tuples-ordering}), the $\Lambda$-distribution of $Z_{\ul{d}}$ converges to a binomial $\Lambda$-distribution with parameters
\[ p=\frac{[q]^{-r}L([q],m+r,r)}{1- [q]^{-r} + [q]^{-r}L([q],m+r,r)} \textrm { and } N=[Y(\overline{\kappa})], \]
i.e.,
\[ \lim_{d \in I_{m+r,r}} \mathbb{E}[\Exp_{\sigma}(Z_{\ul{d}}h_1)] = (1+p(h_1 + h_2 + \ldots))^N. \]
\end{theorem}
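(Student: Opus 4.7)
\textbf{Proof plan for \cref{theorem.ci-geometric-rv}.} The plan is to mimic the strategy used in \cref{theorem.exotic-tranversality-zeta}, replacing the single hypersurface transversality input \cref{prop.homog-equidistribution} with its complete-intersection analogue \cref{prop.tuples-equidistribution}. Concretely, I will realize $Z_{\ul{d}}$ as an integral over $\mathbb{P}^n(\kappabar)$ of a family of local Bernoulli-type random variables on the space of local Taylor data, then apply \cref{theorem.abstract-independence} and \cref{example.constant-product-is-power} to compute the asymptotic $\sigma$-moment generating function as a pre-$\lambda$ power.

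First I would define the family $\mathcal{X} \in C(A, W(\mathbb{C}))$ by declaring, for $P \in Y(\kappabar)$, that $\mathcal{X}_P$ sends a germ $\ul{g}_P \in A_P$ to the unit $\frac{1}{1-t} \in W(\mathbb{C})$ when all $g_i$ lie in $\mathfrak{m}_P$ (i.e., $P \in V(\ul{g})$) and to $0$ otherwise, and for $P \notin Y(\kappabar)$ taking $\mathcal{X}_P$ to be the trivial random variable with value $0$. The key identification is then
\[ Z_{\ul{d}} = \int_{U_{\ul{d}} \times \mathbb{P}^n(\kappabar)/U_{\ul{d}}} \ev_{\ul{d}}^* \mathcal{X}, \]
which follows because a closed point of $C_{\ul{F}}$ corresponds to an orbit in $Y(\kappabar) \cap V(F_1)(\kappabar) \cap \ldots \cap V(F_r)(\kappabar)$, and the contribution of such an orbit of degree $e$ to the integral is $\frac{1}{1 - t^e}$, reproducing the Euler factor in $Z_{C_{\ul{F}}}(t)$.

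Once this is in place, \cref{prop.tuples-equidistribution} supplies equidistribution of $(U_{\ul{d}}, \ev_{\ul{d}})$ on $A/B$ as $\ul{d} \to \infty$ in $I_{m+r, r}$, and \cref{theorem.abstract-independence} gives
\[ \lim_{\ul{d} \in I_{m+r,r}} \mathbb{E}[\Exp_{\sigma}(Z_{\ul{d}} h_1)] = \prod_{\mathbb{P}^n(\kappabar)} \mathbb{E}_{A/\mathbb{P}^n(\kappabar)}[\Exp_{\sigma}(\mathcal{X} h_1)] = \prod_{Y(\kappabar)} \mathbb{E}_{A_{Y(\kappabar)}/Y(\kappabar)}[\Exp_{\sigma}(\mathcal{X} h_1)], \]
where the second equality uses that the factor over $\mathbb{P}^n(\kappabar) \setminus Y(\kappabar)$ is identically $1$ since $\mathcal{X}$ is trivial there.

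The remaining computation is a local probability count: using \cref{lemma.restriction-map} ghost-component by ghost-component, at each $P \in Y(\kappabar)$ the random variable $\mathcal{X}_P$ is Bernoulli taking the value $\frac{1}{1-t}$ with probability given, on the $k$th ghost coordinate at a point of degree $d$, by the ratio
\[ \frac{q^{-rkd} L(q^{kd}, m+r, r)}{1 - q^{-rkd} + q^{-rkd} L(q^{kd}, m+r, r)}; \]
the numerator is the local probability that all $g_i$ vanish at $P$ and are transverse, while the denominator is exactly the local probability of lying in $A_P$ (cf.\ the proof of \cref{prop.tuples-equidistribution}). Packaging these ghost components coherently in $W(\mathbb{C})$ yields $p = \frac{[q]^{-r} L([q], m+r, r)}{1 - [q]^{-r} + [q]^{-r} L([q], m+r, r)}$. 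Since $\mathbb{E}_{A/Y(\kappabar)}[\Exp_{\sigma}(\mathcal{X} h_1)]|_{Y(\kappabar)}$ is then the pullback from $\mathbf{1}$ of $1 + p(h_1 + h_2 + \ldots)$, \cref{example.constant-product-is-power} gives
\[ \prod_{Y(\kappabar)} (1 + p(h_1 + h_2 + \ldots)) = (1 + p(h_1 + h_2 + \ldots))^{[Y(\kappabar)]}, \]
which is exactly the $\sigma$-moment generating function of a binomial $\Lambda$-distribution with parameters $p$ and $N = [Y(\kappabar)]$.

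The main obstacle, as in the exotic transversality case, is bookkeeping: verifying that the local Bernoulli probability computed ghost-wise really assembles into the advertised element $p \in W(\mathbb{C})$, and that the identification of $Z_{\ul{d}}$ as an integral of $\ev_{\ul{d}}^* \mathcal{X}$ respects the $W(\mathbb{C})$-valued structure (in particular, the $\frac{1}{1 - t^e}$ factor appears correctly via integration on a degree-$e$ orbit). Neither step is deep, but both require careful tracking of the conventions in \cref{sss.restriction-to-Vk} and the definition of $\int_{V/B}$; everything else is essentially a transcription of the proof of \cref{theorem.exotic-tranversality-zeta} with the ambient-space transversality condition replaced by the $Y$-transversality condition of Bucur--Kedlaya.
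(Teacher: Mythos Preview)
Your proposal is correct and follows essentially the same approach as the paper's own proof: define the local Bernoulli family $\mathcal{X}$ on $A$, express $Z_{\ul{d}}$ as the integral $\int_{U_{\ul{d}}\times\mathbb{P}^n(\kappabar)/U_{\ul{d}}}\ev_{\ul{d}}^*\mathcal{X}$, invoke \cref{prop.tuples-equidistribution} and \cref{theorem.abstract-independence}, compute the Bernoulli parameter ghost-wise via \cref{lemma.restriction-map}, and finish with \cref{example.constant-product-is-power}. The only cosmetic difference is that the paper states the Bernoulli probability at a degree-$k$ point as the Witt vector $\frac{[q^k]^{-r}L([q^k],m+r,r)}{1-[q^k]^{-r}+[q^k]^{-r}L([q^k],m+r,r)}$ rather than unpacking its ghost components as you do.
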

\begin{proof}
We will use the notation of \cref{prop.tuples-equidistribution}. For $P \in Y(\kappabar)$, we consider the random variable $\mathcal{X}_P$ on $A_P$ that sends a germ $f$ to 
\[ \begin{cases}\frac{1}{1-t} & \textrm{ if $f_i(P)=0$ \textrm{ for all $i$ }} \\
0 & \textrm{ otherwise. }\end{cases} \]
For $P \in \mathbb{P}^m(\kappabar)\backslash Y(\kappabar)$, we set $\mathcal{X}_P$ to be the trivial random variable. 
Then 
\[ Z_d= \int_{U_d \times \mathbb{P}^n_{\kappabar}/ U_d} \ev_d^{*} \mathcal{X}. \]
By \cref{prop.tuples-equidistribution}, $(U_{\ul{d}}, \ev_{\ul{d}})$ equidistributes on $A/\mathbb{P}^m(\kappabar)$, thus, by \cref{theorem.abstract-independence}, the asymptotic $\sigma$-moment generating function is
\[ \prod_{\mathbb{P}^m(\kappabar)} \mathbb{E}_{A/\mathbb{P}^m(\kappabar)}[\Exp_{\sigma}(\mathcal{X}h_1)] = \prod_{Y(\kappabar)} \mathbb{E}_{A_{Y(\kappabar)}/Y(\kappabar)}[\Exp_{\sigma}(\mathcal{X}h_1)] \]
where the equality is because the moment generating function of the trivial random variable over $\mathbb{P}^m(\kappabar)-Y(\kappabar)$ is identically $1$. 

For $P \in Y(\kappabar)$ of degree $k$, $\mathcal{X}_P$ is a Bernoulli random variable equal to $\frac{1}{1-t}$ (the unit in $W(\mathbb{C})$) with probability 
\[ \frac{[q^k]^{-r}L([q^k],m+r,r)}{1- [q^k]^{-r} + [q^k]^{-r}L([q^k],m+r,r)}. \]
Indeed, this can be checked on each ghost component using \cref{lemma.restriction-map}; the numerator gives the probability that an $r$-tuple of germs vanishing at a point are transverse at that point, while the denominator gives the probability that an $r$-tuple of germs either do not all vanish at a point or all vanish and are transverse.

It follows that $\Exp_{\sigma}(\mathcal{X}h_1)|_{Y(\kappabar)}$ is the pullback of
$(1+ p(h_1 + h_2 + \ldots))$ from $
\mbf{1}$ to $Y(\kappabar)$. Thus, using \cref{example.constant-product-is-power} for the second equality, 
\[ \prod_{Y(\kappabar)} \mathbb{E}_{A_{Y(\kappabar)}/Y(\kappabar)}[\Exp_{\sigma}(\mathcal{X}h_1)] = \prod_{Y(\kappabar)} (1+ p(h_1 + h_2 + \ldots)) = (1+ p(h_1 + h_2 + \ldots))^{[Y(\kappabar)]}.   \] 
\end{proof}

\subsubsection{}

We now prove \cref{maintheorem.complete-intersections}. We continue with the notation above, except we now take $Y$ to be a smooth \emph{closed and geometrically connected} subscheme in order to agree with the setup in \cref{ss.intro-applications} (these conditions show up in the definition of vanishing cohomology and in the analysis of the top degree cohomology when establishing congruences modulo $W(\mathbb{C})^\bdd$). 

\begin{proof}[Proof of \cref{maintheorem.complete-intersections}]
We write $X_{\ul{d}}$ for the random variable on $U_{\ul{d}}$ as in \cref{ss.intro-applications} sending $\ul{F}$ to $\mathcal{L}_{C_{\ul{F}}}(t)$. For $Z_{\ul{d}}$ as above, we have, as in the $r=1$ case of \cite[\S8.4]{Howe.RandomMatrixStatisticsAndZeroesOfLFunctionsViaProbabilityInLambdaRings}, 
\begin{equation}\label{eq.vanishing-cohom-rv} X_{\ul{d}}=[q^{-m/2}]\left ((-1)^{m} Z_{\ul{d}} - [H^m(Y)] - (-1)^m\sum_{i=0}^{m-1} (-1)^i (1+[q^{m-i}]) [H^i(Y)]\right). \end{equation}

Because any constant random variables is independent to any other random variable, we find
\[ \mathbb{E}[\Exp_{\sigma}(X_{\ul{d}}h_1)] = \mathbb{E}[\Exp_{\sigma}((-1)^m [q^{-m/2}]Z_{\ul{d}})]\Exp_{\sigma}(\mu h_1) \]
for 
\[ \mu= - [q^{-m/2}][H^m(Y)] - (-1)^m\sum_{i=0}^{m-1} (-1)^i ([q^{-m/2}]+[q^{m/2-i}]) [H^i(Y)]. \]
To obtain \cref{eq.ci-main-theorem-limit}, it remains to note that, by \cref{theorem.ci-geometric-rv},
\[
\Exp_{\sigma}([q^{-m/2}]Z_{\ul{d}})]=(1+ p([q^{-m/2}]h_1 + [q^{-m}]h_2 + \ldots))^{[Y(\kappabar)]} 
\]
and thus also, by \cite[Theorem 2.2.1]{Howe.TheNegativeSigmaMomentGeneratingFunction}, 
\[
\Exp_{\sigma}(-[q^{-m/2}]Z_{\ul{d}})]=(1+ p(-[q^{-m/2}]e_1 + [q^{-m}]e_2 - \ldots))^{[Y(\kappabar)]}. 
\]

It remains just to establish the claimed comparisons mod $[q^{-1/2}]W(\mathbb{C})^\bdd$. This is nearly identical to the proof of \cite[Proposition 9.2.2]{Howe.RandomMatrixStatisticsAndZeroesOfLFunctionsViaProbabilityInLambdaRings} after we establish 
\[ p \equiv [q^{-r}] \mod [q^{-(m+1+r)}] W(\mathbb{C})^\bdd. \]
But this is immediate if we note $L([q],m+r,r)=\prod_{j=0}^{r-1}(1-[q]^{-(m+r-j)})$ then expand
\[ p= \frac{[q]^{-r}L([q],m+r,r)}{1- [q]^{-r} + [q]^{-r}L([q],m+r,r)}=\frac{[q^{-r}]-[q^{-(m+1)-r]}+\ldots}{1-[q^{-(m+1)-r}]+\ldots}. \]
\end{proof}

\section{Semiample equidistribution}\label{s.semiample}

In this section, we first establish an equidistribution result for sections of semiample bundles using the generalization of Poonen's sieve in \cite{ErmanWood.SemiampleBertiniTheoremsOverFiniteFields}, \cref{prop.semiample-equidistribution}. We then combine \cref{prop.semiample-equidistribution} with \cref{theorem.abstract-independence} to compute,  in \cref{theorem.hirzebruch-2d}, the asymptotic $\Lambda$-distribution of the zeta functions of curves of bidegree $(2,d)$ on Hirzebruch surfaces (generalizing the computation of the classical distribution of rational points given in \cite[Theorem 9.9-(b)]{ErmanWood.SemiampleBertiniTheoremsOverFiniteFields}). 

\subsection{Establishing equidistribution}

Let $Y$ be a smooth, projective scheme (integral but not necessarily geometrically integral) of dimension $m$ over $\bF_q$ with $q$ a power of a prime $p$. Consider a very ample divisor $D$ on $Y$ and a globally generated divisor $E$ on $Y$.
Let $\pi$ be the map given by the complete linear series on $E$:
  \[
    \pi:Y\xrightarrow{|E|}\bP^M_{\bF_q}.
  \]

Define $R_{n,d}:=H^0(Y,\sheaf{O}_Y(nD+dE))$ and for $F\in R_{n,d}$, write $H_F$ for the corresponding divisor in $|nD+dE|$.

Suppose $z \in |\pi(Y)| \subseteq |\mathbb{P}^M_{\mathbb{F}_q}|$ is a closed point and $z^{(1)}:=\Spec(\sheaf{O}_{\bP^M_{\bF_q},,z}/\frakm_z^2)$ the first-order infinitesimal neighborhood of $z$.
For $y \in |Y|$ a closed point in $\pi^{-1}(z)$, let $y^{(1)}=\Spec(\sheaf{O}_{Y,y}/\frakm_y^2)$. For any finite subscheme $W\subset \mathbb{P}^M_{\mathbb{F}_q}$, define $Y_W=Y\times_{\mathbb{P}^M_{\mathbb{F}_q}} W$.

Given a section $F\in R_{n,d}$, $H_F$ is smooth at a closed point $y \in\pi^{-1}(z)$ if and only if $F$ does not vanish under the restriction map
  \[
    R_{n,d}
    \to H^0(y^{(1)},\sheaf{O}_{y^{(1)}}(nD))
    \cong \sheaf{O}_{Y,y}/\frakm_y^2.
  \]
where the latter isomorphism depends on a choice of trivialization of $\sheaf{O}_{y^{(1)}}(nD)$, but the condition of non-vanishing does not. 
This restriction map factors as
  \[
    R_{n,d}
    \xrightarrow{\alpha} H^0(Y_{y^{(1)}},\sheaf{O}_{Y_{y^{(1)}}}(nD))
    \rightarrow \sheaf{O}_{Y,y}/\frakm_y^2.
  \]
Set $\sheaf{F}=\pi_*(\sheaf{O}_Y(nD))$, so $\sheaf{F}(d)\cong \pi_*(\sheaf{O}_Y(nD+dE))$.

There is a natural map
\begin{equation*}
  \sheaf{F}(d) \otimes_{\sheaf{O}_{\bP^M}}\sheaf{O}_{z^{(1)}}
  \xrightarrow{\beta} H^0\bigl(Y_{y^{(1)}},\sheaf{O}_{Y_{y^{(1)}}}(nD)\bigr).
\end{equation*}
Note that $\alpha$ is the composition of $\beta$ and the natural restriction map
\begin{equation}\label{eq.restriction-map-semiample}R_{n,d} \rightarrow \sheaf{F}(d) \otimes_{\sheaf{O}_{\bP^M}}\sheaf{O}_{z^{(1)}}.\end{equation}
As explained in the proof of \cite[Lemma 5.2(a)]{ErmanWood.SemiampleBertiniTheoremsOverFiniteFields}, Serre vanishing implies that for $d\gg 0$ this restriction map \cref{eq.restriction-map-semiample} is surjective.

Let $\phi:\bP^M_{\Fqbar}\to\bP^M_{\bF_q}$ (resp. $\phi_k:\bP^M_{\Fqbar}\to\bP^M_{\bF_{q^k}}$) be the natural map. Let $\pi':Y_{\Fqbar}\to\bP^M_{\Fqbar}$  be the base change of $\pi$ relative to $\phi$. We write the homogeneous coordinates on $\mathbb{P}^{M}_{\fq}$ as $x_0, \ldots, x_M$, and for each $P \in (\pi(Y))(\fqbar)$, we fix a $0 \leq j_P \leq M$ such that $x_{j_P}$ does not vanish at $P$; we make this choice so that $j_P$ is constant on orbits. For $F \in R_{n,d}(\Fqbar)$, we write $F_P$ for the image of $F/x_{j_P}^d$ in $\phi^*\sheaf{F} \otimes_{\sheaf{O}_{\mathbb{P}^M_{\fqbar}}}\sheaf{O}_{P^{(1)}}$.

\begin{proposition}\label{prop.semiample-equidistribution}
With notation as above, set $B=(\pi(Y))(\Fqbar)$. For each $P\in B$, let $A_P$ be the set of $g_P\in \phi^*\sheaf{F} \otimes_{\sheaf{O}_{\mathbb{P}^M_{\fqbar}}} \sheaf{O}_{P^{(1)}}$ such that the image of $g_P$ in $\sheaf{O}_{Q^{(1)}}$ is nonzero for all $Q\in\pi'^{-1}(P)$.

Set $A=\bigsqcup_{P\in B}A_P$.
Let $U_d$ be the set of $F\in R_{n,d}(\Fqbar)$ such that for all $P\in B$, the image of $F$ in $\sheaf{O}_{Q^{(1)}}$ is nonzero for all $Q \in\pi'^{-1}(P)$.
Each of $A$, $B$, and $U_d$ are admissible $\bZ$-sets with the geometric Frobenius action.

Let $A\to B$ be the map $g_P\mapsto P$ and $\ev_d:U_d\times B\to A$ the map $(F,P)\mapsto F_P$. If $n\geq \max\{(\dim \pi(Y))(m+1)-1,(\dim \pi(Y))p+1\}$, then $(U_d,\ev_d)$ equidistributes on $A/B$ in the sense of \cref{def.equidistributes}.
\end{proposition}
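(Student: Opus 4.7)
The plan is to mimic the structure of the proofs of \cref{prop.homog-equidistribution} and \cref{prop.tuples-equidistribution}, replacing their appeals to \cite{Poonen.BertiniTheoremsOverFiniteFields} and \cite{BucurKedlaya.TheProbabilityThatACompleteIntersectionIsSmooth} with the semiample Bertini theorem of \cite{ErmanWood.SemiampleBertiniTheoremsOverFiniteFields}.

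First, I would unpack the definition of equidistribution: fix $k \geq 1$ and a finite non-empty admissible $\mathbb{Z}$-subset $B' \subseteq B_{\mbf{k}}$. Identifying each orbit $|P| \in |B'|$ with a closed point of $\pi(Y)_{\mathbb{F}_{q^k}}$, one gets
\[ \Hom_{B_{\mbf{k}}}(B', A_{\mbf{k}}) = \prod_{|P| \in |B'|} A_{|P|}, \]
where $A_{|P|}$ is canonically the subset of $\mathcal{F} \otimes_{\sheaf{O}_{\bP^M_{\mathbb{F}_{q^k}}}} \sheaf{O}_{|P|^{(1)}}$ consisting of germs whose image in $\sheaf{O}_{|Q|^{(1)}}$ is nonzero for every closed point $|Q|$ of $Y_{\mathbb{F}_{q^k}}$ above $|P|$. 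It then suffices to verify \cref{eq.equidist} on singletons $\{(g_{|P|})_{|P|\in|B'|}\}$ in the target, and the right-hand side is $\prod_{|P| \in |B'|} 1/\#A_{|P|}$.

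Next, I would rewrite the left-hand side as a conditional probability:
\[ \bigl((\ev_{d,B'})_*\mu_{U_{d,\mbf{k}}(\mbf{1})}\bigr)(\{(g_{|P|})_{|P|}\}) = \frac{\#\{G \in R_{n,d}(\mathbb{F}_{q^k}) : G_{|P|} = g_{|P|} \textrm{ for all } |P| \in |B'|,\ G \in U_{d,\mbf{k}}(\mbf{1})\}/\#R_{n,d}(\mathbb{F}_{q^k})}{\#U_{d,\mbf{k}}(\mbf{1})/\#R_{n,d}(\mathbb{F}_{q^k})}. \]
Both the numerator and denominator of this ratio are exactly of the type computed asymptotically by \cite[Theorem 1.1 (or Theorem 1.2)]{ErmanWood.SemiampleBertiniTheoremsOverFiniteFields}, applied to $\pi(Y)_{\mathbb{F}_{q^k}}$: the denominator is the probability that the image of $G$ in $\sheaf{O}_{Q^{(1)}}$ is nonzero for every closed point $Q$ of $Y_{\mathbb{F}_{q^k}}$, and the numerator has this same condition outside $|B'|$ together with a finite ``prescription'' condition at the $|P| \in |B'|$. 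The hypothesis $n \geq \max\{(\dim \pi(Y))(m+1)-1,\,(\dim \pi(Y))p+1\}$ is precisely what is needed to apply the semiample sieve of loc.\ cit.\ over $\mathbb{F}_{q^k}$ (noting that the bound is independent of $k$ because $\dim \pi(Y)$ and $m$ are preserved by base change, and $p$ is the absolute characteristic).

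After taking the ratio, the local Euler factor at each $|P| \notin |B'|$ cancels, and the factor at each $|P| \in |B'|$ becomes exactly $\#A_{|P|}^{-1}$ (the prescribed value $g_{|P|}$ contributes the local density in $\mathcal{F} \otimes \sheaf{O}_{|P|^{(1)}}$, and the denominator provides the conditioning on $g_{|P|} \in A_{|P|}$), giving the required equality. The main obstacle will be verifying carefully that the form of Erman--Wood's theorem used permits simultaneously prescribing the restriction at finitely many closed points (as in the analogous step of \cref{prop.tuples-equidistribution}), and that the surjectivity of the restriction map \cref{eq.restriction-map-semiample} for $d \gg 0$ ensures each local prescription is realized by the correct proportion of sections; once this is in hand, the cancellation in the ratio is formal.
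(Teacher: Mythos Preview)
Your proposal is correct and follows essentially the same route as the paper: unpack equidistribution at level $k$, reduce to singletons, write the left side as a conditional probability, and invoke the semiample sieve together with the surjectivity of \cref{eq.restriction-map-semiample} so that the local Euler factors cancel to give $\prod_{|P|\in|B'|} 1/\#A_{|P|}$. The one sharpening worth noting is that the paper appeals specifically to \cite[Theorem~3.1]{ErmanWood.SemiampleBertiniTheoremsOverFiniteFields} (the version with local conditions at prescribed closed points), rather than the statements numbered 1.1/1.2, which directly handles the ``prescription at $|B'|$'' step you flag as the main obstacle.
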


\begin{proof}
For an admissible $\bZ$-subset $B'\subseteq B_{\bfk}$ of finite cardinality, we have
\begin{equation*}
  \Hom_{B_{\bfk}}(B',A_{\bfk})
  = \prod_{|P| \in |B'|} \Hom_{B_{\bfk}}(|P|, A_{\bf{k}}).
\end{equation*}
If we identify the orbit $|P| \subseteq B' \subseteq \mathbb{P}^M(\fqbar)$ with a closed point of $\mathbb{P}^M_{\mathbb{F}_{q^k}}$, then we obtain a canonical identification of 
$\Hom_{B_{\bfk}}(|P|, A_{\bf{k}})$ with a subset $A_{|P|}$ of  $\sheaf{F}\otimes_{\sheaf{O}_{\bP^M_{\mathbb{F}_{q^k}}}}\sheaf{O}_{|P|^{(1)}}$.
Viewing $U_{d,\bfk}(\bfone)$ as the set of $F\in R_{n,d}(\bF_{q^k})$ such that the image $F_{|P|}$ of $F$ in $\sheaf{F}\otimes_{\sheaf{O}_{\bP^M_{\mathbb{F}_{q^k}}}}\sheaf{O}_{|P|^{(1)}}$ lies in $A_{|P|}$ for all $|P|\in|\bP^n_{\bF_{q^k}}|$, the map $\ev_{d,B'}$ sends $F$ to the tuple $(F_{|P|})_{|P|\in|B'|}$.

To establish \cref{eq.equidist}, it suffices to show equality for each singleton $\{(F_{|P|})_{|P| \in |B'|}\}$.
On the right side we have
\begin{equation*}
  \mu_{\Hom_{B_{\bfk}}(B',A_{\bfk})}(\{(F_{|P|})_{|P|}\})
  = \prod_{|P|\in|B'|} \frac{1}{\# A_{|P|}}.
\end{equation*}
The left side can be viewed as a conditional probability:
\begin{align*}
  \MoveEqLeft \bigl((\ev_{d,B'})_*\mu_{U_{d,\bfk}(\bfone)}\bigr)(\{(F_{|P|})_{|P|}\}) \\
  &= \mu_{U_{d,\bfk}(\bfone)}(\ev_{d,B'}^{-1}(\{(F_{|P|})_{|P|}\}) \\
  &= \frac{\#\{G\in R_{n,d}(\bF_{q^k})\mid G_{|P|}=F_{|P|} \text{ for all $|P|\in |B'|$ and $G\in U_{d,\bfk}(\bfone)$}\}/\# R_{n,d}(\bF_{q^k})}{\# U_{d,\bfk}(\bfone)/\# R_{n,d}(\bF_{q^k})}.
\end{align*}

Since the restriction maps of \cref{eq.restriction-map-semiample} (or rather their analogs over $\mathbb{F}_{q^k}$) are surjective for $d\gg 0$, we can phrase local probabilities at $|P|$ in terms of $\phi_k^*\sheaf{F}(d) \otimes_{\sheaf{O}_{\mathbb{P}^M_{\mathbb{F}_{q^k}}}} \sheaf{O}_{|P|^{(1)}}$ instead of $R_{n,d}$. Thus, by \cite[Theorem 3.1]{ErmanWood.SemiampleBertiniTheoremsOverFiniteFields}, as $d\to\infty$, this converges to
\begin{multline*}
  \frac
    {
      \Bigl(
        \prod_{|P|\in|B'|}
        \frac{1}{\# \phi_k^*\sheaf{F} \otimes_{\sheaf{O}_{\bP^M_{\mathbb{F}_{q^k}}}}\sheaf{O}_{|P|^{(1)}} }
      \Bigr)
      \Bigl(
        \prod_{|P|\in|\pi(Y)_{\bF_{q^k}}|-|B'|}
        \frac{\# A_{|P|}}{\# \phi_k^*\sheaf{F} \otimes_{\sheaf{O}_{\bP^M_{\mathbb{F}_{q^k}}}}\sheaf{O}_{|P|^{(1)}} }
      \Bigr)
    }
    {
      \prod_{|P|\in|\pi(Y)_{\bF_{q^k}}|}
      \frac{\# A_{|P|}}{\# \phi_k^*\sheaf{F} \otimes_{\sheaf{O}_{\bP^M_{\mathbb{F}_{q^k}}}}\sheaf{O}_{|P|^{(1)}}}
    }\\
  = \prod_{|P|\in|B'|}\frac{1}{\# A_{|P|}}.
\end{multline*}
So \cref{eq.equidist} is satisfied, and thus $(U_d,\ev_d)$ equidistributes on $A/B$.
\end{proof}

\subsection{Application: Zeta functions of curves on Hirzebruch surfaces}\label{ss.hirzebruch}

In \cite[Theorem 9.9]{ErmanWood.SemiampleBertiniTheoremsOverFiniteFields}, the semiample Bertini theorem is used to compute the asymptotic distribution of some point-counting random variables for smooth curves on Hirzebruch surfaces. Using our methods, this can be extended to compute the full $\Lambda$-distributions  --- the classical distributions in \cite[Theorem 9.9]{ErmanWood.SemiampleBertiniTheoremsOverFiniteFields} are equivalent to the restriction of the $\Lambda$-distributions to $\mathbb{Z}[h_1] \subseteq \Lambda$. We illustrate this below in the case of bidegree $(2,d)$ curves on Hirzebruch surfaces (\cite[Theorem 9.9-(b)]{ErmanWood.SemiampleBertiniTheoremsOverFiniteFields}).

\subsubsection{}Let $\kappa$ be a finite field of order $q$, let $Y = \mathrm{Proj}_{\mathbb{P}^1_{\kappa}} (\Sym^\bullet (\sheaf{O}\oplus\sheaf{O}(a)))$ for $a \geq 0$ be a Hirzebruch surface over $\kappa$, and write $\pi:Y \rightarrow \mathbb{P}^1_{\kappa}$ for its natural projection.
We let $E$ be the divisor on $Y$ of the fiber over $\infty$ on $\mathbb{P}^1_\kappa$ (so that $\pi$ is induced by $E$) and let $D$ the class of a hyperplane section in the relative proj construction. We write $\sheaf{O}(i,j)=\sheaf{O}(iD + jE)$, a line bundle on $Y$. 

Let $U_d$ be the admissible $\mathbb{Z}$-set of global sections of $\sheaf{O}(2,d)$ on $Y_{\kappabar}$ with smooth vanishing locus. For $F$ in $U_d$, we write $\kappa(F)$ for the subfield of $\kappabar$ generated by the coefficients of $F$ and $\kappa$, and we view the vanishing locus $V(F)$ as a scheme over $\kappa(F)$. 

Let $Z_d$ be the random variable on $U_d$ sending $F$ to the zeta function 
\[ Z_{V(F)}(t) = \prod_{y \in |V(F)|} \frac{1}{1-t^{\deg{y}}} = \prod_{z \in |\mathbb{P}^1_{\kappa(F)}|} Z_{\pi^{-1}(z) \cap V(F)}(t). \]  

\begin{theorem}\label{theorem.hirzebruch-2d}
With notation as above,
\begin{multline*}\lim_{d \rightarrow \infty} \mathbb{E}[\Exp_{\sigma}(Z_d h_1)]=\\
\left( \frac{([q]^{2}-1)([q]-1)\left(\sum_{j \geq 0} h_j\right) + \frac{[q]^{4}-[q]^{2}}{2}\left(\sum_{j\geq 0}h_j\right)^2  + \frac{([q]^{2}-[q])^2}{2} \left(\sum_{j \geq 0}h_j(\ul{t}^2)\right)}{{[q]^{4}-[q]^{2}-[q]+1}} \right)^{\mbb{P}^1(\kappabar)}.\end{multline*}
\end{theorem}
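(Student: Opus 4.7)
The plan is to write the zeta function random variable $Z_d$ as a motivic Euler product via \cref{theorem.abstract-independence}. For $F \in U_d$, stratifying $|V(F)|$ by the fiber of $\pi \colon Y \to \mathbb{P}^1_\kappa$ gives
\[ Z_{V(F)}(t) = \prod_{z \in |\mathbb{P}^1_{\kappa(F)}|} Z_{V(F) \cap \pi^{-1}(z)/\kappa(z)}(t^{\deg z}). \]
With $(U_d, A, B = \mathbb{P}^1(\kappabar))$ as in \cref{prop.semiample-equidistribution} for $n = 2$, I define $\mathcal{X} \in C(A, W(\mathbb{C}))$ by sending $g_P \in A_P$ to $Z_{V(F_0)/\kappa(P)}(t)$, where $F_0$ is the restriction of $g_P$ to $\pi^{-1}(P) \cong \mathbb{P}^1_{\kappa(P)}$ (a degree-$2$ form). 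The factorization above matches the formula for integration over fibers in \cref{sss.fiber}, identifying $Z_d$ with $\int_{U_d \times \mathbb{P}^1(\kappabar)/U_d} \ev_d^* \mathcal{X}$. Granting equidistribution, \cref{theorem.abstract-independence} then yields
\[ \lim_{d \to \infty} \mathbb{E}[\Exp_\sigma(Z_d h_1)] = \prod_{\mathbb{P}^1(\kappabar)} \mathbb{E}_{A/\mathbb{P}^1(\kappabar)}[\Exp_\sigma(\mathcal{X} h_1)]. \]

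I will show that the inner expectation is the pullback from $\mbf{1}$ of the element $H$ appearing as the base of the power in the theorem, whence \cref{example.constant-product-is-power} delivers $H^{[\mathbb{P}^1(\kappabar)]}$. Using \cref{lemma.restriction-map} and \cref{lemma.expectations-change-of-k}, the $\ell$th ghost component of the expectation at a point $P$ of degree $k$ reduces to an $\mathbb{F}_{q^{k\ell}}$-version of the $k = \ell = 1$ calculation and depends only on the product $k\ell$, so matching it to $H_{k\ell} = (p_k \circ H)_\ell$ will confirm constancy. At a rational $P$, the $\mathbb{F}_q$-points of $A_P$ form a subset of a $6$-dimensional space of pairs $(F_0, F_1)$ (the restriction to the fiber and the normal derivative), cut out by smoothness of $V(F)$ at each $\fqbar$-point of $\pi^{-1}(P)$. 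Enumerating by the splitting type of $F_0 \ne 0$: (i) two distinct $\mathbb{F}_q$-roots, $F_1$ unconstrained, with $\mathcal{X}_P = (1-t)^{-2}$ and $\Exp_\sigma(\mathcal{X}_P h_1) = (\sum_{j\geq 0} h_j)^2$; (ii) two conjugate roots over $\mathbb{F}_{q^2}$, $F_1$ unconstrained, with $\mathcal{X}_P = (1-t^2)^{-1}$ and $\Exp_\sigma(\mathcal{X}_P h_1) = \sum_{j\geq 0} h_j(\ul{t}^2)$; (iii) a double $\mathbb{F}_q$-root $Q$, requiring $F_1(Q) \ne 0$, with $\mathcal{X}_P = (1-t)^{-1}$ and $\Exp_\sigma(\mathcal{X}_P h_1) = \sum_{j\geq 0} h_j$. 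The case $F_0 = 0$ is excluded because then the $1$-jet of $F$ at any $Q \in \pi^{-1}(P)(\fqbar)$ is $(0, 0, F_1(Q))$, and the degree-$2$ form $F_1$ necessarily has a zero, violating smoothness. Routine counting over $\mathbb{F}_q$ gives cardinalities $\tfrac{q^4(q^2 - 1)}{2}$, $\tfrac{q^4(q-1)^2}{2}$, and $q^2(q-1)(q^2-1)$, totalling $|A_P \cap (\mathbb{F}_q\text{-points})| = q^2(q^4 - q^2 - q + 1)$; dividing and simplifying reproduces $H_1$ as stated.

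The main obstacle is ensuring the equidistribution input. The quantitative hypothesis $n \geq (\dim \pi(Y)) p + 1$ in \cref{prop.semiample-equidistribution} is not satisfied by $n = 2$ when $p \geq 2$, so I will need to invoke the semiample Bertini framework of \cite{ErmanWood.SemiampleBertiniTheoremsOverFiniteFields} directly (in the vein of the analysis underlying \cite[Theorem 9.9-(b)]{ErmanWood.SemiampleBertiniTheoremsOverFiniteFields}) to establish equidistribution of $(U_d, \ev_d)$ on $A/B$ for the Hirzebruch $(2, d)$ family. Once equidistribution is in hand, the remaining computation is a careful but routine enumeration.
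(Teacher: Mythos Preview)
Your proposal is correct and follows essentially the same argument as the paper: define $\mathcal{X}$ via fiberwise zeta functions, apply \cref{theorem.abstract-independence}, enumerate the three splitting types of a binary quadratic form (with the same counts you obtain) to compute the local $\sigma$-moment generating function, and recognize the result as a pullback from $\mathbf{1}$ so that \cref{example.constant-product-is-power} yields the pre-$\lambda$ power. For the equidistribution gap you correctly flag, the paper appeals to \cite[Proposition~8.2]{ErmanWood.SemiampleBertiniTheoremsOverFiniteFields}, which verifies directly that the hypotheses of \cite[Theorem~3.1]{ErmanWood.SemiampleBertiniTheoremsOverFiniteFields} are met for this Hirzebruch family even though $n=2$ falls outside the general bound in \cref{prop.semiample-equidistribution}.
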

\begin{proof}
We adopt the notation of \cref{prop.semiample-equidistribution} for our choice of $Y$, $D$, and $E$ above. We note that, although $n=2$ does not satisfy the bounds given in \cref{prop.semiample-equidistribution}, by \cite[Proposition 8.2]{ErmanWood.SemiampleBertiniTheoremsOverFiniteFields}, the application of \cite[Theorem 3.1]{ErmanWood.SemiampleBertiniTheoremsOverFiniteFields} in the proof of \cref{prop.semiample-equidistribution} is still valid in this specific setting, so that we have equidistribution.  

For $P \in \mathbb{P}^1(\kappabar)$, let $\mathcal{X}_P$ be the random variable on $A_P$ that sends a germ $g_P$ to $Z_{V(\overline{g}_P)}(t)$, where here the vanishing locus is taken inside of $Y_P\cong \mathbb{P}^1_{\fqbar}$, $\overline{g}_P$ is the induced element of $H^0(Y_P, \sheaf{O}(2D)) \cong H^0(\mathbb{P}^1_{\fqbar}, \sheaf{O}(2))$, and we treat $V(\overline{g}_P)$ as being defined over $\kappa(g_P)$ to obtain a zeta function.   

We then have 
\[ X_d = \int_{U_d \times \mathbb{P}^1(\kappabar) / U_d} \ev^* \mc{X} \]
and thus, by \cref{theorem.abstract-independence}, 
\[ \lim_{d \rightarrow \infty} \mathbb{E}[\Exp_{\sigma}(Z_d h_1)]= \prod_{\mathbb{P}^1(\kappabar)} \mathbb{E}_{A/\mathbb{P}^1(\kappabar)}[\Exp_{\sigma}(\mathcal{X}h_1)]. \]

We now compute the $\sigma$-moment generating function for $\mc{X}_P$, $P \in \mathbb{P}^1(\kappabar)$, using \cref{lemma.restriction-map}. Let $q_P=\#\kappa(P)$.
We note that $\res_k(\mc{X}_P)$ can be viewed as a function on the germs at $P$ defined over $\kappa(P)_k$, the degree $k$ extension of $\kappa(P)$ in $\kappabar$. On such a germ $g$, it takes value
\begin{enumerate}
\item $\frac{1}{1-t}$ if $\overline{g}$ is the square of a single factor. The number of such cases is 
\[  (q_P^{2k}-1) \cdot (q_P^{3k}-q_P^{2k}).\]
Here the $q_P^{2k}-1=(q_P^k+1)(q_P^k-1)$ is the number of points in $\mathbb{P}^1(\kappa(P)_k)$ times the number of degree two homogeneous equations vanishing at such a point with multiplicity two, and  the factor $q_P^{3k}-q_P^{2k}$ is the number of possible smooth extensions $g$ of each $\overline{g}$ (which correspond to degree two polynomials that don't have a zero at the same point --- cf. \cite[Lemma 9.8 and preceding paragraph]{ErmanWood.SemiampleBertiniTheoremsOverFiniteFields})\footnote{One can also compare this computation with \cite[proof of Proposition 9.9-(b)]{ErmanWood.SemiampleBertiniTheoremsOverFiniteFields}, but note that there is a typo in the corresponding computation in that proof: the second $(q-1)(q+1)$ appearing should in fact be our $q^3-q^2$.}.
\item $\left(\frac{1}{1-t}\right)^2$  if $\overline{g}$ splits into two distinct factors over $\kappa(P)_k$. The number of such cases is 
\[ \frac{(q_P^{k}+1)q_P^{k}}{2}(q_P^k -1)q_P^{3k}=\frac{q_P^{2k}-1}{2}q_P^{4k}\]
where $\frac{(q_P^{k}+1)q_P^{k}}{2}(q_P^k -1)$ is the number of pairs of distinct points in $\mathbb{P}^1(\kappa(P)_k)$ times the number of degree two homogeneous equations vanishing exactly at such a pair, and $q_P^{3k}$ counts the number of smooth extensions $g$ of each $\overline{g}$ (which correspond to arbitrary degree $3$ polynomials). 
\item  $\frac{1}{1-t^2}$ if $\overline{g}$ is irreducible over $\kappa(P)_k$. The number of such cases is 
\[ \frac{q_P^{2k}-q_P^k}{2}(q_P^k-1)q_P^{3k}= \frac{(q_P^k-1)^2}{2}q_P^{4k} \] 
where $\frac{q_P^{2k}-q_P^k}{2}(q_P^k-1)$ is the number of degree $2$ closed points in $\mathbb{P}^1_{\kappa(P)_k}$ times the number of degree two homogeneous equations vanishing exactly at such a point, and $q_P^{3k}$ counts the number of smooth extensions $g$ of each $\overline{g}$ (which correspond to arbitrary degree $3$ polynomials). 
\end{enumerate}
Note that the total number of cases adds up to $q_P^{6k}-q_P^{4k}-q_P^{3k}+q_P^{2k}$, i.e. this is the denominator for the probability of each case occurring. 

Now we note that 
\[ \Exp_{\sigma}\left( \frac{1}{1-t} h_1 \right) = \sum_{j \geq 0} h_j \] 
and 
\[ \Exp_{\sigma}\left(\left(\frac{1}{1-t}\right)^2 h_1\right)=\left(\Exp_{\sigma}\left(\frac{1}{1-t} h_1\right)\right)^2=\left(\sum_{j \geq 0} h_j\right)^2. \]
The formula for $\Exp_{\sigma}(\frac{1}{1-t^2})$ is more complicated, but we only need the first component, which is straightforward:
\[ \Exp_{\sigma}\left(\frac{1}{1-t^2}h_1\right)_1 = \sum_{\tau} \left(h_\tau \circ \frac{1}{1-t^2}\right)_1 m_\tau =\sum_{\tau} m_{2\tau}=\sum_{\tau} m_\tau(\ul{t}^2)=\sum_j h_j(\ul{t}^2). \]
The first equality is \cite[Example 2.5.2]{Howe.RandomMatrixStatisticsAndZeroesOfLFunctionsViaProbabilityInLambdaRings} and the second follows because $\frac{1}{1-t^2}=[\mbf{2}]$ so that $h_j \circ \frac{1}{1-t^2}=[\Sym^j (\mbf{2})]$; indeed, $\Sym^j(\mbf{2})$ has one fixed point if $j$ is even and no fixed points otherwise. Combining our computation above of the probability of each value $\bullet$ with these computations of $\Exp_{\sigma}(\bullet)_1$, we obtain 
\begin{multline*} \mathbb{E}_1[ \Exp_{\sigma}(\res_k(\mc{X}_P h_1))] = \\\frac{(q_P^{2k}-1)(q_P^{k}-1)\left(\sum_{j \geq 0} h_j\right) + \frac{q_P^{4k}-q_P^{2k}}{2}\left(\sum_{j\geq 0}h_j\right)^2  + \frac{(q_P^{2k}-q_P^k)^2}{2} \left(\sum_{j \geq 0}h_j(\ul{t}^2)\right)}{{q_P^{4k}-q_P^{2k}-q_P^k+1}}.\end{multline*}
Applying \cref{lemma.restriction-map}, and comparing $k$th components, we find
\begin{multline*} \mathbb{E}[ \Exp_{\sigma}(\mc{X}_P h_1)] = \\\frac{([q_P]^{2}-1)([q_P]-1)\left(\sum_{j \geq 0} h_j\right) + \frac{[q_P]^{4}-[q_P]^{2}}{2}\left(\sum_{j\geq 0}h_j\right)^2  + \frac{([q_P]^{2}-[q_P])^2}{2} \left(\sum_{j \geq 0}h_j(\ul{t}^2)\right)}{{[q_P]^{4}-[q_P]^{2}-[q_P]+1}}.\end{multline*}
The function on $\mathbb{P}^1(\kappabar)$ sending $P$ to this series is the pullback from $\bf1$ of 
\[ \frac{([q]^{2}-1)([q]-1)\left(\sum_{j \geq 0} h_j\right) + \frac{[q]^{4}-[q]^{2}}{2}\left(\sum_{j\geq 0}h_j\right)^2  + \frac{([q]^{2}-[q])^2}{2} \left(\sum_{j \geq 0}h_j(\ul{t}^2)\right)}{{[q]^{4}-[q]^{2}-[q]+1}}. \]
Thus, applying \cref{example.constant-product-is-power} to compute  $\prod_{\mathbb{P}^1(\kappabar)} \mathbb{E}_{A/\mathbb{P}^1(\kappabar)}[\Exp_{\sigma}(\mathcal{X}h_1)]$, we obtain the claimed expression. 
\end{proof}

\bibliographystyle{plain}
\bibliography{references, preprints}

\begin{thebibliography}{10}

\bibitem{Bertucci.TaylorConditionsOverFiniteFields}
Matthew Bertucci.
\newblock Taylor conditions over finite fields.
\newblock {\em ar{X}iv:2412.08744}, 2024.

\bibitem{Bilu.MotivicEulerProductsAndMotivicHeightZetaFunctions}
Margaret Bilu.
\newblock Motivic {E}uler products and motivic height zeta functions.
\newblock {\em Mem. Amer. Math. Soc.}, 282(1396):v+185, 2023.

\bibitem{BiluDasHowe.SpecialValuesOfMotivicEulerProducts}
Margaret Bilu, Ronno Das, and Sean Howe.
\newblock Special values of motivic euler products.
\newblock {\em In preparation}.

\bibitem{BiluDasEtAl.ZetaStatisticsAndHadamardFunctions}
Margaret Bilu, Ronno Das, and Sean Howe.
\newblock Zeta statistics and {H}adamard functions.
\newblock {\em Adv. Math.}, 407:Paper No. 108556, 68, 2022.

\bibitem{BiluHowe.MotivicRandomVariables}
Margaret Bilu and Sean Howe.
\newblock Motivic random variables.
\newblock {\em In preparation}.

\bibitem{BiluHowe.MotivicEulerProductsInMotivicStatistics}
Margaret Bilu and Sean Howe.
\newblock Motivic {E}uler products in motivic statistics.
\newblock {\em Algebra Number Theory}, 15(9):2195--2259, 2021.

\bibitem{BucurKedlaya.TheProbabilityThatACompleteIntersectionIsSmooth}
Alina Bucur and Kiran~S. Kedlaya.
\newblock The probability that a complete intersection is smooth.
\newblock {\em J. Th\'{e}or. Nombres Bordeaux}, 24(3):541--556, 2012.

\bibitem{ErmanWood.SemiampleBertiniTheoremsOverFiniteFields}
Daniel Erman and Melanie~Matchett Wood.
\newblock Semiample {B}ertini theorems over finite fields.
\newblock {\em Duke Math. J.}, 164(1):1--38, 2015.

\bibitem{Howe.RandomMatrixStatisticsAndZeroesOfLFunctionsViaProbabilityInLambdaRings}
Sean Howe.
\newblock Random matrix statistics and zeroes of {L}-functions via probability in $\lambda$-rings.
\newblock {\em ar{X}iv:2412.19295}, 2024.

\bibitem{Howe.TheNegativeSigmaMomentGeneratingFunction}
Sean Howe.
\newblock The negative $\sigma$-moment generating function.
\newblock {\em ar{X}iv:2505.01205}, 2025.

\bibitem{Poonen.BertiniTheoremsOverFiniteFields}
Bjorn Poonen.
\newblock Bertini theorems over finite fields.
\newblock {\em Ann. of Math. (2)}, 160(3):1099--1127, 2004.

\end{thebibliography}

\end{document}